\newcommand{\qee} {\hspace*{2mm}\hfill \ding{109}}
\renewcommand{\iff}{\leftrightarrow}
\renewcommand{\phi}{\varphi}
\renewcommand{\Theta}{\varTheta}
\renewcommand{\Phi}{\varPhi}
\renewcommand{\Psi}{\varPsi}
\renewcommand{\Xi}{\varXi}
\renewcommand{\Omega}{\varOmega}
\newcommand{\qedright}{\belowdisplayskip=-12pt}
\newtheorem{theorem}{Theorem}[section]
\newtheorem{define}[theorem]{Definition}
\newtheorem{exa}[theorem]{Example}
\newenvironment{example}{\begin{exa} \rm}{\qee\end{exa}}
\newtheorem{exerc}[theorem]{Exercise}
\newtheorem{conj}[theorem]{Conjecture}
\newtheorem{ques}[theorem]{Open Question}
\newenvironment{question}{\begin{ques} \rm}{\qee\end{ques}}
\newtheorem{lemma}{Lemma}[section]
\newtheorem{corollary}{Corollary}[section]
\newtheorem{rem}[theorem]{Remark}
\newenvironment{remark}{\begin{rem} \rm}{\qee\end{rem}}
\DeclareMathOperator{\necessary}{\text{\tikz[scale=.6ex/1cm,baseline=-.6ex,line width=.1ex]{
                            \draw (-1,-1) rectangle (1,1);}}}
  \DeclareMathOperator{\xpossible}{\text{\tikz[scale=.6ex/1cm,baseline=-.6ex,rotate=45,line width=.1ex]{
                            \draw (-1,-1) rectangle (1,1); \draw[very thin] (-.6,-.6) rectangle (.6,.6);}}}
                            \newcommand{\tightbox}[1]{%
  \tikz{\node[rectangle,draw,inner sep=0.25pt,line width=0.25pt] (A) {$#1$};}%
}
\newcommand{\boxvee}{\mathbin{\text{\small \tightbox{\vee}}}}
\newcommand{\mc}[1]{\mathcal {#1}}
\newcommand{\mf}[1]{\mathfrak {#1}}
\newcommand{\medent}{\medskip\noindent}
 \newcommand{\tupel}[1]{{\langle #1 \rangle}}
\newcommand{\verz}[1]{\{ #1 \}}
\newcommand{\To}{\Rightarrow}
\newcommand{\Iff}{\Leftrightarrow}
\newcommand{\apr}{{\vartriangle}}
\newcommand{\opr}{\necessary}
\newcommand{\jump}{\mathrel{\mbox{\textcolor{gray}{$\blacktriangleright$}}}}
\newcommand{\jumpb}{\mathrel{\mbox{\textcolor{gray}{$\blacktriangleleft$}}}}
 \newcommand{\nrhd}{\mathrel{\not\! \rhd}}
\newcommand{\mutint}{\bowtie}
\newcommand{\sls}{\subseteq_{\sf e}}
\newcommand{\sle}{\supseteq_{\sf e}}
\newcommand{\downp}[1]{#1_{\mf p}}
\newcommand{\downr}[1]{#1_{\mf r}}
   \definecolor{grre}{rgb}{0.7,0.5,0.5}
\newcommand{\sline}{\raise-0.3ex\hbox{$\hbox{--}\kern-0.84ex\raise0.45ex\hbox{$\hbox{\scalebox{0.3}{\bf /}}
\kern-0.37ex\hbox{\scalebox{0.3}{\bf /}}$}$}}
\newcommand{\slinei}{\raise-0.3ex\hbox{$\hbox{--}\kern-0.84ex\raise0.45ex\hbox{$\hbox{\scalebox{0.3}{\bf \textbackslash}}
\kern-0.37ex\hbox{\scalebox{0.3}{\bf \textbackslash}}$}$}}
\newcommand{\lhdnneq}{\mathrel{\lhd_{\hspace*{-0.27cm}{}_{\kern0.2ex \slinei}}\hspace*{0.09cm}}}
\newcommand{\rhdnneq}{\mathrel{\rhd_{\hspace*{-0.27cm}{}_{\kern0.3ex \sline}}\hspace*{0.09cm}}}
\newcommand{\sva}{\phi}
\newcommand{\svb}{\psi}
\newcommand{\sigv}{\sigma}
\newcommand{\idt}[1]{\mathfrak{id}_{#1}}
\newcommand{\idtb}[2]{\mathfrak{id}_{#1}^{#2}}
\newcommand{\ccz}{\ensuremath{\xpossible}}
    \newcommand{\braee}[1]{\lceil #1 \rceil}
        \newcommand{\card}{{\mathfrak c}}
            \newcommand{\imod}[1]{{\hat #1}}
            \newcommand{\tol}{\Rsh}
             \newcommand{\ehu}{essentially hereditarily undecidable}
             \newcommand{\spth}[1]{{\mathfrak #1}}
             \newcommand{\pams}{{{\sf PA}^-_{\sf scat}}}
             \newcommand{\num}[1]{{\underline{#1}}}
             \newcommand{\nul}{{\mathfrak z}}
             \newcommand{\opv}{{\mathfrak s}}
  \DeclareMathOperator{\gtrial}{\text{\tikz[scale=.6ex/1cm,baseline=-.55ex,line width=.1ex]{
                            \draw[gray, fill = gray, fill opacity = .90] (-0.6,0) -- (1,1) -- (1,-1);}}}
      \DeclareMathOperator{\gtriar}{\text{\tikz[scale=.6ex/1cm,baseline=-.6ex,rotate = 180,line width=.1ex]{
                            \draw[gray, fill = gray, fill opacity = .90] (-0.6,0) -- (1,1) -- (1,-1);}}}
                            \newcommand{\gmut}{\mathrel{\gtriar\!\!\gtrial}}
                            \DeclareMathOperator{\otrial}{\text{\tikz[scale=.6ex/1cm,baseline=-.55ex,line width=.1ex]{
                            \draw(-0.6,0) -- (1,1) -- (1,-1)--(-0.6,0);}}}
 \newcommand{\gmutl}{\mathrel{\gtriar\!\!\otrial}}
    \newcommand{\recis}{\approx}
       \newcommand{\syneq}{\simeq}
       \newcommand{\bles}{\mathbin{<}}
\newcommand{\bleq}{\mathbin{\leq}}
\title{Essential Hereditary Undecidability}
\author{Albert Visser}
 \address{Philosophy, Faculty of Humanities,
                Utrecht University,
               Janskerkhof 13,
                3512BL~~Utrecht, The Netherlands}
\email{a.visser@uu.nl}
\date{\today}
\begin{document}

\keywords{interpretability, essential hereditary undecidability}

\subjclass[2010]{03F25,
03F30,
03F40
}

\thanks{I thank Yong Cheng and Fedor Pakhomov for enlightening discussions.
I am grateful to Fedor for his kind permission to use a proof-idea of his. I thank Taishi
Kurahashi for his many suggestions for improvement of a previous version of this paper.}

\begin{abstract}
In this paper we study \emph{essential hereditary undecidability}. Theories with this property are a convenient tool to
prove undecidability of other theories.
The paper develops the basic facts concerning essentially hereditary undecidability
and provides salient examples, like a construction of \ehu\ theories due to Hanf and an example of
a rather natural \ehu\ theory strictly below {\sf R}. We discuss the (non-)interaction
of essential hereditary undecidability with recursive boolean isomorphism. 

We develop a reduction relation \emph{essential tolerance}, or, in the converse direction, \emph{lax interpretability} that 
interacts in a good way with essential hereditary undecidability. 

We introduce the class of $\Sigma^0_1$-friendly theories
and show that $\Sigma^0_1$-friendliness is sufficient but not necessary for essential hereditary undecidability.

Finally, we adapt an argument due to Pakhomov, Murwanashyaka and Visser to show that there is no interpretability
minimal \ehu\ theory. 
\end{abstract}

\maketitle

\section{Introduction}
Robinson's Arithmetic {\sf Q} has a wonderful property, to wit  \emph{essential hereditary undecidability}. 
This means that, if a theory is compatible with {\sf Q}, it is undecidable (and even
hereditarily undecidable). This property is very useful as a tool to prove that theories are undecidable.
 A classical example of this method is Tarski's proof of the undecidability of group
theory. See \cite{tars:unde53}.
As we will see, {\sf Q} shares this property with many other theories \dots\footnote{Tarski, Mostowski and Robinson, in \cite{tars:unde53}, provide the terminological
rules that dictate the systematic name \emph{essential hereditary undecidability}. They also prove that e.g. {\sf Q} has the property. However, they
do not explicitly employ the name. The systematic name was pointed out to us by Fedor Pakhomov. The first place
we found that uses the systematic name is \cite{hanf:mode65}.}

In this paper, we study  \emph{essential hereditary undecidability}. The paper is partly an exposition of some of the literature,
but it also contains original results and analyses.
We provide a number of examples of \ehu\ theories, both from the literature and new.

We connect the notion with two reduction relations: \emph{interpretability} and \emph{lax interpretability} (i.e., \emph{converse
essential tolerance}). Lax interpretability will be introduced in the present paper.
Specifically, we show that  essential hereditary undecidability is upward preserved under lax interpretability.
We study the interaction of the Tarski-Mostowski-Robinson theory {\sf R} with lax interpretability.
We show that, in a sense that generalises mutual lax interpretability, {\sf R} is equal to the false $\Sigma^0_1$-sentences.

We develop the notion of $\Sigma^0_1$-friendliness and show that it is sufficient but not necessary for
essential hereditary undecidability.

Finally, we demonstrate that there is no interpretability minimal \ehu\ theory. The proof is a minor adaptation of
the proof that there is no interpretability minimal essentially undecidable theory in \cite{viss:nomi22}.

\section{Basics} 
In this section we provide the basic facts and definitions needed in the rest of the paper.

\subsection{Theories and Interpretations}
A theory is, in this paper, an RE theory of classical predicate logic in finite signature. A theory is given by an index of an RE axiom set.
Here we confuse the sentences of a theory with numbers. We will usually work with a bijective G\"odel numbering of the
sentences. We adapt the G\"odel numbering in each case to the signature at hand.

We write $U\sls V$ for: $U$ and $V$ are theories in the same language and the set of theorems of $U$ is contained
in the set of theorems of $V$. We use $=_{\sf e}$ for: $U$ and $V$ are theories in the same language and $U$ and $V$ prove the same
theorems.

If $U$ is a theory, we write $\downp U$ for the set of its theorems and $\downr U$ for the set of its refutable sentences, i.e.,
$\downr U := \verz{\phi\mid U \vdash \neg\, \phi}$.

We take $\idt U$ to be the finitely axiomatised theory of identity for the signature of $U$.
This theory is a built-in feature of  predicate logic. However, if we work with interpretations, we need to check that it holds
for the equivalence relation posing as the identity of the interpreted theory.

An \emph{interpretation} $K$ of a theory $U$ in a theory $V$ is based  on \emph{a translation} $\tau$ of the $U$-language into the $V$-language.
Translations are most naturally thought of as translations between relational languages. A translation of a language with terms proceeds in two steps.
First we follow a standard algorithm to translate the language with terms into a purely relational language and then we apply a translation as described below.
A translation for the relational case commutes with the propositional connectives. In some broad sense, it also commutes with the
quantifiers but here there are a number of extra features. 
\begin{itemize}
\item
Translations may be more-dimensional: we allow a variable to be translated to an appropriate sequence of variables.
\item
We may have domain relativisation: we allow the range of the translated quantifiers to be some domain definable
in the $V$-language.
\item
We may even allow the new domain to be built up from pieces of possibly, different dimensions.
\end{itemize} 
A further feature is that identity need not be translated to identity but can be translated to a congruence relation.
 Finally, we may also allow parameters in an interpretation.
To handle these the translation may specify a parameter-domain $\alpha$. 

We can define the obvious identity translation of a language in itself, composition of translations and a disjunctive translation
$\tau\tupel{\phi}\nu$. E.g., in case $\tau$ and $\nu$ have the same dimension and are non-piecewise, the domain of
$\tau\tupel{\phi}\nu$ becomes $(\phi\wedge \delta_\tau(\vv x)) \vee (\neg\,\phi \wedge \delta_\nu(\vv x))$.

We refer the reader for details to e.g. \cite{viss:onq17}.

\emph{An interpretation} is a triple $\tupel{U,\tau,V}$, where $\tau$ is a translation of the $U$-language in the
$V$-language such that, for all $\phi$, if $U \vdash \phi$, then $V \vdash \phi^\tau$.\footnote{In case we have parameters
with parameter-domain $\alpha$ this becomes: $V \vdash \exists \vv x\, \alpha(\vv x)$ and,
for all $\phi$, if $U \vdash \phi$, then $V \vdash \forall \vv x\, (\alpha(\vv x) \to \phi^{\tau,\vv x})$.
See also Appendix~\ref{paraloca} for a discussion of the use parameters for interpretations of finitely axiomatised theories.}

We write:
\begin{itemize}
\item
$K:U \lhd V$ for: $K$ is an interpretation of $U$ in $V$.
\item
$U \lhd V$ for: there is a $K$ such that $K:U \lhd V$. We also write
$V\rhd U$ for: $U \lhd V$.
\item
$U \lhd_{\sf loc} V$ for: for every finitely axiomatisable sub-theory $U_0$ of $U$, we have
$U_0 \lhd V$.
\item
 $U \lhd_{\sf mod} V$ for: for every $V$-model $\mc M$, there is a translation $\tau$ from the $U$-language in the $V$-language, such that
 $\tau$ defines an internal $U$-model $\mc N = \widetilde \tau(\mc M)$ of $U$ in $\mc M$.
 \item
 We write $U \mutint V$ for: $U \lhd V$ and $V\lhd U$. Similarly, for the other reduction relations.
\end{itemize}

Given two theories 
$U$ and $V$ we form $W := U \ovee V$ in the following way.
The signature of $W$ is the disjoint union of the signatures
of $U$ and $V$ with an additional fresh zero-ary predicate $P$.
The theory $W$ is axiomatised by the axioms $P \to \phi$ if $\phi$ is
a $U$-axiom and $\neg\, P \to \psi$ if $\psi$ is a $V$-axiom. One
can show that $U\ovee V$ is the infimum of $U$ and $V$ in the 
interpretability ordering $\lhd$. This result works for all
choices of our notion of interpretation.

\subsection{Arithmetical Theories}
The  theory {\sf R},  introduced in \cite{tars:unde53}, is a primary example of an \ehu\ theory.
For various reasons, we will work with a slightly weaker version of {\sf R}. See Remark~\ref{joshsmurf} below for a
brief explanation of the difference and some background.
The language of {\sf R}, in our variant, is the arithmetical language $\mathbb A$ with $0$, {\sf S}, $+$, $\times$ and $<$. 

 Here are the axioms of {\sf R}. The underlining stands
for the usual unary numeral function.
\begin{enumerate}[{\sf R}1.]
\item
$\vdash \num m + \num n = \num{m+n}$
\item
$\vdash \num m \times \num n = \num{m\times n}$
\item
$\vdash \num m \neq \num n $, for $m\neq n$
\item
$\vdash x<  \num n \to \bigvee_{i< n} x= \num i$
\item
$\vdash  x< \num n \vee x= \num n \vee \num n < x$
\end{enumerate}
\begin{remark}\label{joshsmurf}
The original version of {\sf R} in \cite{tars:unde53} did not have $<$ in the language. Tarski, Mostowski and Robinson  used $\leq$ as a defined symbol with the following 
definition:
$u \leq t$ iff $\exists w\; w+u=t$. Their axioms are the obvious adaptation of the above ones with $\leq$ in stead of $<$.
One can employ an even weaker theory ${\sf R}_0$, where one drops Axiom {\sf R}5 and strengthens {\sf R}4 by replacing the implication by
a bi-implication. See, e.g., \cite{jone:vari83} for a discussion. Vaught, in his paper \cite{vaug:theo62}, employs an even weaker variant of
${\sf R}_0$. We note that there is a mistake in Vaught's formulation of his axioms. They need a strengthening to make everything work.
\end{remark}

An important tool in the present paper is the theory of a number. There are various ways to develop this. E.g., we can treat the numerical operations
as partial functions. Here we will employ a version using total functions. 
This version was developed by Johannes Marti, Nal Kalchbrenner, Paula Henk and Peter Fritz in 
\emph{Interpretability Project Report} of 2011, the report of a project they did  under my guidance in the Master of Logic in 
Amsterdam.\footnote{We simplified the axioms of Marti, Kalchbrenner, Henk and Fritz a bit and also implemented
three nice simplifications suggested by the referee of a previous paper.}
The language of {\sf TN} is the arithmetical language $\mathbb A$.
Here are the axioms of {\sf TN}.
\begin{enumerate}[{\sf TN}1.]
\item
$\vdash x \not < 0$
\item
$\vdash (x< y \wedge y <z ) \to x < z$
\item
$\vdash x< y \vee x= y \vee y < x$
\item
$\vdash x=0 \vee \exists y\; x={\sf S}y$
\item
$\vdash{\sf S}x \not < x$ \label{gnipgnapsmurf}
\item
$\vdash x<y \to (x <{\sf S}x \wedge y \not < {\sf S}x)$ \label{gnagnasmurf}
\item
$\vdash x+0=x$
\item
$\vdash x+{\sf S}y ={\sf S}(x+y)$
\item
$\vdash x \times 0 = 0$
\item
$\vdash x\times {\sf S}y = x\times y + x$
\end{enumerate}

\noindent
We note that, by substituting $x$ for $y$, ${\sf TN}\ref{gnagnasmurf}$ implies $x \not < x$.
So, $<$ satisfies the axioms of a linear strict ordering with minimum 0. If follows from this fact in combination with
${\sf TN}\ref{gnagnasmurf}$ again, that $<$ is a discrete ordering and that {\sf S} does give the order successor
when applied to a non-maximal element. Moreover, by {\sf TN}\ref{gnipgnapsmurf}, if there is a maximal element, then {\sf S} maps it to itself.
 It follows that a model of  {\sf TN} is a discrete linear ordering with minimum 0. So, it  either represents a  finite ordinal
or starts with a copy of $\omega$. Moreover, on a finite domain the successor function will behave normally, except
on the maximum $\mathfrak m$, where we have ${\sf S}\mathfrak m = \mathfrak m$.

\begin{remark}
The structure $\mathbb Z$ is a model of all axioms except {\sf TN}1. Moreover, $\omega+1$, where
we cut off all operations at $\omega$ in the obvious way, is a model of all axioms except ${\sf TN}4$. 
Finally, $\mathbb Z_2$ with the ordering generated by $0<1$ is a model of all axioms except ${\sf TN}5$. 
\end{remark}

We will be interested in the theory of a witness of a $\Sigma^0_1$-sentence $\sigma$. There is a minor problem here.
Even if the witness exists as a non-maximal element of a model, the value of a term may stick out.
We can avoid this in several ways. 
We discussed one such way in our paper \cite{viss:onq17}. We follow the same strategy in the present paper.
We define \emph{pure $\Delta_0$}-formulas as follows:
\begin{itemize}
\item
$\delta ::= \bot \mid \top \mid u< v \mid 0=u \mid {\sf S}u=v \mid u+v=w \mid u\times  v=w  \mid \neg\, \delta \mid$\\ 
\hspace*{0.7cm}
$(\delta \wedge \delta) \mid (\delta \vee \delta) \mid (\delta \to \delta) \mid \forall u \bles v\, \delta \mid \exists u\bles v\, \delta
 \mid \forall u \bleq v\, \delta \mid \exists u\bleq v\, \delta$.
\end{itemize}
Here the bounded quantifiers are the usual abbreviations.

A \emph{pure} $\Sigma^0_1$-formula is of the form $\exists \vv u\, \delta$, where $\delta$ is pure $\Delta_0$.
In  \cite{viss:onq17}, we showed that every ordinary $\Sigma^0_1$-sentence can always be rewritten modulo
${\sf EA}+\mathrm B\Sigma_1$-provable equivalence to a pure one.
We call something a 1-$\Sigma^0_1$-formula if it starts with precisely one single existential quantifier.

A subtlety occurs in the treatment of substitution: consider a pure $\Sigma^0_1$-formula $\sigma$ and, e.g., a substitution of
a numeral in it, 
$\sigma[x:= \underline n\,]$. Here we will always assume that the result of substitution is
rewritten to an appropriate pure $\Sigma^0_1$ normal form.

We need the notion $z\models \phi$, where $z$ is considered as a number that models {\sf TN}, where
the arithmetical operations are cut off at $z$. We define $z\models \phi$ by $\phi^{\mf {tr}(z)}$, where
$\mf {tr}(z)$ is a parametric translation from the arithmetical language to the arithmetical language which is defined
as follows:
\begin{itemize}
\item
the domain of $\mf {tr}(z)$ is the set of $x$ such that $x \leq z$, 
\item
${\sf Z}^{\mf{tr}(z)}(w) :\iff w=0$,
\item
${\sf S}^{\mf{tr}(z)}(x,w) :\iff ({\sf S}x \leq z \wedge w={\sf S}x) \vee ({\sf S}x \not \leq z \wedge w=z)$,
\item
${\sf A}^{\mf{tr}(z)}(x,y,w) :\iff (x+y \leq z \wedge w=x+y) \vee (x+y \not \leq z \wedge w=z)$,
\item
${\sf M}^{\mf{tr}(z)}(x,y,w) :\iff (x\times y \leq z \wedge w=x\times y) \vee (x\times y \not \leq z \wedge w=z)$.
\end{itemize} 

Let $\sigma := \exists \vv x\, \delta$, where 
$\delta$ is pure $\Delta_0$ with at most the $\vv x$ free. Here $\vv x := x_0,\dots, x_{n-1}$.
We define:  
\begin{itemize}
\item
$\sigma^{\mf q} :=  \exists z\, (\exists x_0\bles z\dots \exists x_{n-1}\bles z\;  \delta \wedge z\models \bigwedge {\sf TN})$. 
\end{itemize}
We note that $\sigma^{\mf q}$ is equivalent with  $\exists z\; z\models (\bigwedge {\sf TN} \wedge \exists \vec x\,  (\delta \wedge \bigwedge_{i\bles n} {\sf S}x_i \neq x_i))$.

We will confuse  $\sigma^{\mf q}$ with the theory axiomatised by this sentence.
Since everything relevant to the evaluation of the sentence happens strictly below
$z$,  the pure $\Sigma^0_1$-sentence $\sigma$ in the context of $(\cdot)^{\mf q}$ has its usual arithmetical meaning.

The following result is easily verified.
\begin{theorem}
\begin{enumerate}[i.]
\item
$\sigma^{\mf q} \vdash \sigma$.
\item
Suppose $\sigma$ is true. Then, ${\sf R}\vdash \sigma^{\mf q}$.
\item
Suppose $\sigma$ is false. Then $\sigma^{\mf q} \vdash {\sf R}$.
\end{enumerate}
\end{theorem}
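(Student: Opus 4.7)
Part~(i) is immediate from the definition: the conjunct $\exists\vv x\bles z\,\delta(\vv x)$ inside $\sigma^{\mf q}$ trivially entails $\sigma$ on dropping the bound $z$. For part~(ii), given a true $\sigma$ with standard witness tuple $\vv a$, pick $k$ strictly bounding $\vv a$ together with every subterm value needed to evaluate $\delta(\num{\vv a})$. Inside ${\sf R}$: axioms ${\sf R}1$ and ${\sf R}2$ compute each atom of $\delta(\num{\vv a})$ and hence verify $\delta(\num{\vv a})$; ${\sf R}5$ witnesses $\num{\vv a}\bles\num k$; and for each ${\sf TN}$-axiom $\tau$, its relativisation to $\mf{tr}(\num k)$ unfolds via ${\sf R}4$ into a finite conjunction of numerical facts derivable from ${\sf R}1$--${\sf R}3$. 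Concatenating these blocks yields ${\sf R}\vdash\sigma^{\mf q}$.

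For part~(iii), my approach is to show that $\sigma^{\mf q}$ is outright inconsistent when $\sigma$ is false, so that $\sigma^{\mf q}\vdash{\sf R}$ holds trivially. From $\sigma^{\mf q}$ we extract $z\models\bigwedge{\sf TN}$ together with witnesses $\vv x\bles z$ satisfying $\delta(\vv x)$, and by the equivalent form noted after the definition we may assume $\bigwedge{\sf S}x_i\neq x_i$, i.e.\ $\vv x$ strictly below the inner maximum. Since $\delta$ is pure $\Delta_0$ and every subterm value occurring in $\delta(\vv x)$ is $\bleq z$, the cut-off and outer arithmetic agree on these values, so $\delta(\vv x)$ transfers cleanly to $\mf{tr}(z)$. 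I would then prove, by external induction on $n$, that $\sigma^{\mf q}\vdash \num n\bles z$: the inductive step iterates ${\sf TN}4$ to force any $\vv x\bleq\num n$ to equal some concrete $\num{\vv a}$ with $\vv a\leq n$, and then uses ${\sf TN}1$--${\sf TN}10$ together with the fact that $\delta(\num{\vv a})$ is a false pure $\Delta_0$ sentence on numerals already known distinguishable below $z$ to refute this. Propagating back produces, as in the toy case $\sigma=\exists x\,(0=x\wedge{\sf S}x=x)$ where inner ${\sf TN}6$ applied to the derived $0\bles z$ forces $0\bles 0$ against inner ${\sf TN}1$, an outright contradiction, so $\sigma^{\mf q}\vdash\bot$.

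The delicate point is the interleaving of two inductive streams: ``$\num n\bles z$'' on the one hand and ``each false $\delta(\num{\vv a})$ with $\vv a\leq n$ is $\sigma^{\mf q}$-refutable'' on the other. The second depends on enough numerals below $z$ being distinguishable, which is supplied by earlier stages of the first; arranging these into a lock-step external induction whose dependency strictly unwinds without circularity is the only subtle part. Once arranged, the rest reduces to pure $\Delta_0$ bookkeeping driven by the fixed syntactic shape of $\delta$, and the uniformly verifiable witness scheme makes each stage essentially mechanical.
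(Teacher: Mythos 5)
Parts~(i) and~(ii) of your argument are essentially correct (the step ``${\sf R}5$ witnesses $\num{\vv a}\bles\num k$'' also needs ${\sf R}3$ and ${\sf R}4$, but that is a detail). Part~(iii), however, rests on a false premise: it is \emph{not} the case that $\sigma^{\mf q}$ is inconsistent whenever $\sigma$ is false. Indeed, by part~(ii) together with the consistency of ${\sf R}$, we know $\sigma$ true $\To$ $\sigma^{\mf q}$ consistent; if in addition $\sigma$ false $\To$ $\sigma^{\mf q}$ inconsistent, then the set of false pure $1$-$\Sigma^0_1$-sentences would coincide with the RE set $\verz{\sigma\mid{}\vdash\neg\,\sigma^{\mf q}}$, which is impossible since that set is $\Pi^0_1$-complete. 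The paper itself presupposes consistent instances: the class $\spth S$ in Section~4 is defined as the set of \emph{consistent} $\sigma^{\mf q}$ with $\sigma$ false, and the left-to-right direction of Theorem~\ref{sigmasmurf} needs such instances to exist. Concretely, if $\sigma$ asserts the existence of a proof of $0=1$ in some consistent RE theory, a suitable nonstandard model of a strong arithmetic, cut off at a $z$ just above a nonstandard ``proof'', satisfies $\sigma^{\mf q}$.

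Where your sketch goes wrong is the final step: the external induction establishing $\sigma^{\mf q}\vdash\num n\bles z$ for each standard $n$ (which \emph{is} the right key lemma) never closes off in a derivation of $\bot$. Each stage is a separate finite derivation; collectively they show only that, in any model of $\sigma^{\mf q}$, the witnessing $z$ lies above every standard numeral, hence the cut-off structure on $\verz{w\mid w\bleq z}$ begins with a copy of $\omega$ on which the cut-off operations agree with the ambient ones. One must then use this, axiom by axiom, to derive each ${\sf R}$-axiom from $\sigma^{\mf q}$: ${\sf R}3$ from the distinctness of $\num m,\num n$ below $z$ inside the internal {\sf TN}-model; ${\sf R}1$--${\sf R}2$ from the observation that for $u,v,w\bles z$ the relation ${\sf A}^{\mf{tr}(z)}(u,v,w)$ coincides with ambient $u+v=w$ (and likewise for multiplication), together with the {\sf TN}-verifiable fact $\num m+\num n=\num{m+n}$ below $z$; and ${\sf R}4$, ${\sf R}5$ from discreteness and linearity of the internal ordering. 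Your toy case $\sigma=\exists x\,(0=x\wedge{\sf S}x=x)$ is misleading precisely because there the $\Delta_0$-matrix is refutable at \emph{every} point of every {\sf TN}-model; that local refutability is exactly what fails for a general false $\Sigma^0_1$-sentence, whose falsity is a $\Pi^0_1$-fact invisible inside any finite or nonstandard initial segment.
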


We will employ witness comparison notation. Suppose $\alpha$ is of the form $\exists x\, \alpha_0(x)$ and $\beta$ is of the form
$\exists y\, \beta(y)$.
We define:
\begin{itemize}
\item
  $\alpha<\beta := \exists x\, (\alpha_0(x) \wedge \forall y \bleq x\, \neg\, \beta_0(y))$.
  \item
$\alpha \bleq  \beta:= \exists x\, (\alpha_0(x) \wedge \forall y \bles  x\, \neg\, \beta_0(y))$.
\item
If $\gamma$ is  $\alpha< \beta$, then $\gamma^\bot$ is $\beta \leq \alpha$.
\item
 If $\delta$ is $\alpha \leq \beta$, then
$\delta^\bot$ is $\beta < \alpha$.
\end{itemize}

\noindent 
We note that witness comparisons between pure 1-$\Sigma^0_1$-formulas 
are again pure 1-$\Sigma^0_1$-formulas. The following insights are immediate.

\begin{theorem}\label{minabelesmurf}\label{knutselsmurf}
Suppose $\sigma$ and $\sigma'$ are pure $1$-$\Sigma^0_1$-sentences. Then,
\begin{enumerate}[i.]
\item
If $\sigma \leq \sigma'$, then ${\sigma'}^{\mf q} \vdash \sigma$.
\item
$(\sigma\leq \sigma')^{\mf q} \vdash \neg \, (\sigma' < \sigma)$ and $(\sigma< \sigma')^{\mf q} \vdash \neg \, (\sigma' \leq \sigma)$.
\item
Suppose $\sigma\leq \sigma'$. Then, $(\sigma'< \sigma)^{\mf q}$ is inconsistent.
Suppose $\sigma< \sigma'$. Then, $(\sigma'\leq \sigma)^{\mf q}$ is inconsistent.
\item
Suppose $\sigma$ is true. Then, if we allow piecewise interpretations, we have $\top \rhd \sigma^{\mf q}$. 
If we do not
allow piecewise interpretations, we still have $(\exists x\, \exists y \, x\neq y) \rhd \sigma^{\mf q}$.
\end{enumerate}
\end{theorem}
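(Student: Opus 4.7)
The plan is to leverage the design of the $(\cdot)^{\mf q}$-wrapper: inside any of its models we have a cutoff $z$ with $\bigwedge {\sf TN}$ holding below $z$ and the relevant witnesses sitting strictly in the interior, so that pure $\Delta_0$-parts evaluate with their intended arithmetical meaning. A single case-analysis template then handles parts (i)--(iii), while (iv) asks for a separate finite-model construction.

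For (i), I first fix a concrete standard witness $x^\ast$ for the assumed true $\sigma \leq \sigma'$, so that $\sigma_0(\num{x^\ast})$ is true and each $\sigma'_0(\num k)$ with $k < x^\ast$ is false. Reasoning inside ${\sigma'}^{\mf q}$, we are given $z$ with $\bigwedge {\sf TN}$ below $z$ and a below-$z$ witness $y$ of $\sigma'$, and I split on $y$ against $\num{x^\ast}$ using {\sf TN}-trichotomy. The case $y < \num{x^\ast}$ uses the standard {\sf R}-style consequence of {\sf TN}, namely $u < \num n \to \bigvee_{i<n} u = \num i$, to force $y = \num k$ for some $k < x^\ast$, which then contradicts the true $\Delta_0$-fact $\neg\, \sigma'_0(\num k)$. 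The case $\num{x^\ast} \leq y$ gives $\num{x^\ast} < z$, so the true $\Delta_0$-fact $\sigma_0(\num{x^\ast})$ is derivable below $z$ and witnesses $\sigma$.

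Part (iii) runs the same template inside a model of $(\sigma' < \sigma)^{\mf q}$: the below-$z$ witness $y$ satisfying $\sigma'_0(y) \wedge \forall x \leq y\, \neg\, \sigma_0(x)$ contradicts $\neg\, \sigma'_0(\num k)$ in the first trichotomy branch, and in the second delivers $\num{x^\ast} \leq y$, whence $\neg\, \sigma_0(\num{x^\ast})$ below $z$ clashes with the true $\Delta_0$-fact $\sigma_0(\num{x^\ast})$. Part (ii) runs the template internally: $(\sigma \leq \sigma')^{\mf q}$ supplies an interior $\bar x < z$ with the $\Delta_0$-clauses of $\sigma \leq \sigma'$ holding below $z$, and an assumed native witness $\bar y$ of $\sigma' < \sigma$ is compared against $\bar x$ by trichotomy, both branches yielding contradictions parallel to those above; the $(\sigma < \sigma')^{\mf q} \vdash \neg\, (\sigma' \leq \sigma)$ half is symmetric.

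For (iv), truth of $\sigma$ yields a concrete $N$ for which the standard structure on $\{0,\dots,N\}$, with operations pinched at $N$, is a finite model of $\sigma^{\mf q}$. What remains is to interpret this finite model in $\top$ respectively in $\exists x\, \exists y\, x \neq y$. With piecewise interpretations, one assembles pieces of differing dimensions so that every model of $\top$, regardless of cardinality, produces a domain of exactly $N+1$ elements; without piecewise, two distinct elements furnish $d$-tuples for large $d$, from which a definable $(N+1)$-subset can be carved. I expect (iv) to be the main obstacle: while (i)--(iii) all collapse onto a single case-split against the numeral $\num{x^\ast}$, the interpretation-theoretic engineering in (iv)---piecewise assembly, parameters, and definable cuts---is considerably more delicate.
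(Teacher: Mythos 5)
The paper declares these insights immediate and gives no proof, so I am assessing your argument on its own terms. For (i) and (iii) your reasoning is exactly right and surely the intended one: fix the standard least witness $x^\ast$ certifying the true witness-comparison, split the below--$z$ witness against the numeral $\num{x^\ast}$ by trichotomy --- legitimate because trichotomy holds in every {\sf TN}-model and that witness lives inside the cut $\{w : w\leq z\}$ --- and then use absoluteness of pure $\Delta_0$-formulas at standard points strictly below $z$ to transfer the relevant truth and falsity. Both branches close, and the same scheme settles (iii).

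Part (ii), however, has a genuine gap. You posit a witness $\bar y$ of $\sigma'<\sigma$ \emph{in the ambient model} and then ``compare it against $\bar x$ by trichotomy.'' But a model of $(\sigma\leq\sigma')^{\mf q}$ is constrained only on the cut; outside $\{w : w \leq z\}$ nothing forces $<$ to be linear, and nothing forces $\bar y \leq z$. Concretely, from $(\sigma\leq\sigma')^{\mf q}$ and absoluteness you obtain $\sigma_0(\bar x)\wedge\forall y<\bar x\,\neg\,\sigma'_0(y)$ in the outer model; from the putative outer witness you get $\sigma'_0(\bar y)\wedge\forall x\leq\bar y\,\neg\,\sigma_0(x)$; and these together yield only $\bar y\not<\bar x$, $\bar x\not<\bar y$, and $\bar x\neq\bar y$, which for a non-linear $<$ is not a contradiction. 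So the case split you invoke is not exhaustive and the template stalls. Contrast (iii), where the relevant witness sits below $z$ \emph{by hypothesis}, so trichotomy is available. To finish (ii) you would have to get $\bar y$ inside the cut, or appeal to some further convention on how $\neg\,(\sigma'<\sigma)$ is to be read; as written, your (ii) does not go through.

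Part (iv) is basically fine, and I would not rank it as the main obstacle --- the engineering is routine (take $N$ strictly above the least witness so the witness is strictly below the top $z$; use $N{+}1$ pieces with all elements inside each piece identified; in the non-piecewise version add a parameter to break the automorphism of the two-element pure-identity model). The real sticking point in your proposal is (ii), not (iv).
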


\begin{remark}
The reader of this paper will develop some feeling for the subtleties involved in our strategy to handle 
$\Sigma^0_1$-sentences in the context of theories of a number. See  Appendix~\ref{rosser} for an illustration of these
subtleties. 

In Taishi Kurahashi's paper \cite{kura:inco22} a somewhat different approach to obtain 
sentences with the good properties of the $\sigma^{\mf q}$ is worked out. Kurahashi's paper verifies many details in a careful way.

A different idea for the treatment of the theory of the witness of a $\Sigma^0_1$-sentence is to work with the usual definition
of $\Sigma^0_1$,
but demand that the maximum element, if there is one, is larger that a suitable function of
the G\"odel number of $\sigma$ and the maximum of the witnesses.

One can also develop theories of a number
using partial functions. This leads again to different possibilities to define the  $\sigma^{\mf q}$-like sentences. 
See, e.g., \cite{viss:vaug12} for an attempt to treat theories of a number in this style.

In a yet different approach, one develops finite versions of set theory. This idea is already discussed in \cite{vaug:theo62}.
See \cite{pakh:weak19} for a beautiful way of realising the idea.  
\end{remark}

\subsection{Recursive Boolean Isomorphism}

Two theories $U$ and $V$ are \emph{recursively boolean isomorphic} iff, there is a bijective recursive function $\Phi$, considered as a function from
the sentences of the $U$ language to the $V$-language, such that:
\begin{enumerate}[i.]
\item $\Phi$ commutes with the boolean connectives, so, e.g.,  $\Phi(\bot)= \bot$ and
$\Phi(\phi \wedge \psi)= (\Phi(\phi) \wedge \Phi(\psi))$,
\item
$U \vdash \phi$ iff $V \vdash \Phi(\phi)$.  
\end{enumerate}

\noindent
We note that it follows that, e.g.,
\begin{eqnarray*}
\Phi^{-1}(\phi'\wedge\psi') & = & \Phi^{-1}(\Phi\Phi^{-1}(\phi') \wedge \Phi\Phi^{-1}(\psi')) \\
& = & \Phi^{-1}\Phi(\Phi^{-1}(\phi') \wedge \Phi^{-1}(\psi')) \\
& = & \Phi^{-1}(\phi') \wedge \Phi^{-1}(\psi') 
\end{eqnarray*}
So $\Phi^{-1}$ is indeed the inverse isomorphism.

The demands on recursive boolean isomorphism are rather stringent. So it is good to know that the presence of an object
satisfying far weaker demands implies the presence of a recursive boolean isomorphism.

Let us write $\vdash$ of $U$-derivability and $\vdash'$ for $U'$-derivability. We also write $\sim$ for $U$-provable equivalence and
$\sim'$ for $U'$-provable equivalence. We let $\phi,\psi,\dots$ range over $U$-sentences and $\phi',\psi',\dots$ over $U'$-sentences.

Let us say that an RE relation $\mathcal E$ between numbers, considered as a relation between $U$- and $U'$-sentences, \emph{witnesses a
recursive Lindenbaum isomorphism} iff we have:
\begin{enumerate}[a.]
\item
For all $\phi$, there are $\chi$ and $\chi'$ such that $\phi \sim\chi \mathrel{\mc E}\chi'$;
\item
For all $\phi'$, there are $\chi$ and $\chi'$ such that $\chi \mathrel{\mc E}\chi'\sim' \phi'$;
\item
If $\phi_0 \mathrel{\mc E}\phi'_0$ and $\phi_1 \mathrel{\mc E}\phi'_1$, then $\phi_0\sim \phi_1$ iff $\phi'_0\sim \phi'_1$;
\item
If $\phi \sim \phi'$ and $\psi \sim\psi'$ and $(\phi\wedge \psi) \sim \chi \mathrel{\mc E} \chi'$, then
$\chi' \sim' (\phi'\wedge \psi')$. 
Similarly, for the other boolean connectives. 
\end{enumerate}

The dual form of (d) follows from (a,c,d).
Suppose $\phi \mathrel{\mc E} \phi'$ and $\psi \mathrel{\mc E}\psi'$ and $\chi \mathrel{\mc E} \chi'\sim' (\phi'\wedge\psi')$. By (a) and (d),
we can find $\rho$ and $\rho'$ such that  $(\phi\wedge \psi) \sim \rho \mathrel{\mc E} \rho' \sim' (\phi'\wedge \psi')$.
It follows that $\rho' \sim \chi'$, and, hence, by (c), that $\rho \sim \chi$.

It is easy to see that if $\mc E$ witnesses recursive Lindenbaum isomorphism, then so does ${\sim} \circ  {\mc E} \circ {\sim'}$.

Let us say that a sentence is \emph{a pseudo-atom} iff it is either atomic or if it has a quantifier as main connective.

\begin{theorem}\label{lindenbaum}
Suppose $\mc E$ witnesses recursive Lindenbaum isomorphism between $U$ and $U'$. Then, we can effectively find
from an index of $\mc E$ an index of a recursive boolean isomorphism $\Phi$ between $U$ and $U'$.
\end{theorem}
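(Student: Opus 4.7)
My plan is to build $\Phi$ by a back-and-forth construction on pseudo-atoms. Observe first that any recursive boolean isomorphism $\Phi$ is determined by, and forces, a recursive bijection $\Phi_0\colon A_U\to A_{U'}$ between the sets of pseudo-atoms of the two theories: if $a\in A_U$ were sent to a non-pseudo-atom $\phi_1\ast \phi_2$ (for $\ast$ a binary connective), then $a=\Phi^{-1}(\phi_1)\ast\Phi^{-1}(\phi_2)$, contradicting pseudo-atomicity of $a$; and analogously for negation. Conversely, any recursive bijection $\Phi_0\colon A_U\to A_{U'}$ extends to a recursive bijection $\hat\Phi_0$ on all sentences by structural recursion, automatically commuting with the boolean connectives. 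So the proof reduces to finding a recursive bijection $\Phi_0$ on pseudo-atoms whose extension preserves provability.

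I would construct $\Phi_0=\bigcup_n\Phi_n$ as the union of an effective chain of finite partial bijections, each satisfying the invariant: for all boolean combinations $\phi,\psi$ over $\mathrm{dom}(\Phi_n)$, $\phi\sim\psi$ iff $\hat\Phi_n(\phi)\sim'\hat\Phi_n(\psi)$. At stage $n{+}1$, alternating forth and back, I take the least pseudo-atom $a$ on one side not in the current domain and seek a match. Using clause (a) (resp.\ (b)), by simultaneous enumeration of $\mc E$ and $\sim$, I locate a pair $\chi\mathrel{\mc E}\chi'$ with $a\sim\chi$; then $[\chi']_{\sim'}$ is the image of $[a]_\sim$ under the Lindenbaum isomorphism $\tilde{\mc E}$ induced by $\mc E$. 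I next enumerate $A_{U'}\setminus\mathrm{range}(\Phi_n)$ together with $\sim'$, committing to the first $b'$ discovered with $b'\sim'\chi'$: this guarantees $[b']_{\sim'}=\tilde{\mc E}([a]_\sim)$, so that by clauses (c) and (d) the invariant extends automatically to $\Phi_{n+1}=\Phi_n\cup\{(a,b')\}$. The back step is symmetric, using clause (b).

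The main obstacle is to show that the search terminates at every stage, i.e.\ that a suitable $b'\in A_{U'}\setminus\mathrm{range}(\Phi_n)$ with $b'\sim'\chi'$ exists. This is a richness requirement on the theories: the Lindenbaum class $\tilde{\mc E}([a]_\sim)$ must contain a pseudo-atom not already used. In the settings of interest in this paper each realized Lindenbaum class contains infinitely many pseudo-atoms (e.g.\ by renaming bound variables in a representative quantified sentence, so that syntactically distinct pseudo-atoms abound in every such class), so the pool is never exhausted by finitely many commitments. Granted this, the limit $\Phi_0$ is a total recursive bijection $A_U\to A_{U'}$, and by clauses (c) and (d) its extension $\hat\Phi_0$ satisfies $U\vdash\phi$ iff $U'\vdash\hat\Phi_0(\phi)$. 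Since all enumerations and searches are uniform in an index of $\mc E$, we obtain the effective extraction of an index of $\Phi$ from that of $\mc E$.
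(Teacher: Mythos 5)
Your proof is correct and follows essentially the same back-and-forth construction as the paper's: both match pseudo-atoms by a joint search through $\mc E$ and the provability relations, and both handle boolean composites by recursion on their immediate sub-sentences --- you by factoring out the extension step explicitly, the paper by using an enumeration that lists boolean sub-sentences before the sentences built from them. Your hedged termination claim (``in the settings of interest in this paper'') in fact holds in complete generality and needs no hedge: for any first-order sentence $\psi$ and any variable $x_i$ not occurring in $\psi$, the sentence $\forall x_i\,\psi$ is a pseudo-atom logically equivalent to $\psi$, so each Lindenbaum class contains infinitely many pseudo-atoms --- this is exactly the paper's remark that ``we can always add vacuous quantifiers to a sentence.''
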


\begin{proof}
This is by a straightforward back-and-forth argument. 
Suppose $\mc E$ witnesses recursive Lindenbaum isomorphism.
Without loss of generality we may assume that $\mc E={\sim} \circ  {\mc E} \circ {\sim'}$.
Let us employ enumerations of sentences
that enumerate boolean sub-sentences before sentences. 

We construct $\Phi$ in steps. 
Suppose we already have constructed \[(\phi_0,\phi'_0),\, \dots,\, (\phi_{k-1},\phi'_{k-1}).\] (Here $k$ may be 0.)
Suppose $k$ is even. Let $\phi_k$ be the first sentence in the enumeration of the $U$-sentences not among the $\phi_i$, for $i<k$.
In case $\phi_k$ is a pseudo-atom, we take $\phi'_k$ the first pseudo-atom in the enumeration of the $\psi'$ such that
$\phi_k \mathrel{\mc E} \psi'$. It is easy to see that there will always be such a pseudo-atom since we can always add vacuous quantifiers
to a sentence. If $\phi_k$ is, e.g., a conjunction, it will be of the form $(\phi_i \wedge \phi_j)$, for $i,j<k$, and we set $\phi_k' := (\phi'_i \wedge \phi'_j)$.
The case that $k$ is odd, is, of course, the dual case.

Clearly,  this construction indeed delivers a recursive boolean isomorphism. 
\end{proof}

Let us write $U\recis U'$ for $U$ is recursively isomorphic to $U'$. An important insight is that $\recis$ is a bisimulation w.r.t.
theory extension (in the same language). This means that:
\begin{description}
\item[zig]
If $U \recis V$ and $U'\sle U$, then there is a $V' \sle V$, such that $U' \recis V'$;
\item[zag]
 If $U \recis V$ and $V'\sle V$, then there is a $U' \sle  U$, such that $U' \recis V'$.
 \end{description}
 
 \begin{theorem}\label{bisim}
 $\recis$ is a bisimulation for $\sls$.
 \end{theorem}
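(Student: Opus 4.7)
The plan is to establish the \emph{zig} direction; \emph{zag} follows by interchanging the roles of the two theories and using $\Phi^{-1}$, which is itself a recursive boolean isomorphism witnessing $V \recis U$. So, fix $\Phi$ witnessing $U \recis V$, and let $U' \sle U$ be presented by some RE axiom set $\mc A$ in the (common) $U$-language. The natural candidate for $V'$ is the theory in the $V$-language axiomatized by the $V$-axioms together with $\{\Phi(\alpha) : \alpha \in \mc A\}$. Since $\Phi$ is recursive and $\mc A$ is RE, this axiom set is RE, so $V'$ is a legitimate theory, and visibly $V' \sle V$.

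It then remains to verify that the very same function $\Phi$ witnesses $U' \recis V'$. Bijectivity, recursiveness, and commutation with the boolean connectives are inherited unchanged (we are merely restricting the ``provability'' clause to two larger theories in the same pair of languages), so only the provability equivalence $U' \vdash \phi \iff V' \vdash \Phi(\phi)$ requires argument. For the forward direction, compactness plus the deduction theorem give $\alpha_1,\ldots,\alpha_n \in \mc A$ with $U \vdash (\alpha_1 \wedge \cdots \wedge \alpha_n) \to \phi$; applying $U \recis V$ and the commutation of $\Phi$ with $\wedge$ and $\to$ yields $V \vdash (\Phi(\alpha_1) \wedge \cdots \wedge \Phi(\alpha_n)) \to \Phi(\phi)$, and combining with $\Phi(\alpha_i) \in V'$ gives $V' \vdash \Phi(\phi)$. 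The converse runs the same extraction-plus-transport in the reverse direction through $\Phi^{-1}$, starting from a finite subset of the extra axioms of $V'$.

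The one thing to watch is that we package logical consequence as a single boolean implication, so that the deduction theorem keeps us entirely in the boolean comfort zone of $\Phi$; once that is done, commutation with the connectives is doing all the heavy lifting, and nothing about how $\Phi$ acts on atoms or quantifiers ever enters the argument. I expect no real obstacle beyond keeping this bookkeeping straight, and the symmetry of the setup is what makes both \emph{zig} and \emph{zag} go through uniformly.
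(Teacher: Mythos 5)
Your argument is correct and follows essentially the same route as the paper: build $V'$ by transporting the axioms of $U'$ through $\Phi$, observe that $\Phi$ itself (restricted, with the provability clause referring to the larger theories) remains a recursive boolean isomorphism, and verify provability-preservation via compactness plus commutation of $\Phi$ with $\wedge$ and $\to$. The paper defines $V'$ directly as $\{\Phi(\phi) \mid \phi \in U'\}$ rather than as $V$ together with $\Phi[\mc A]$, but these axiomatize the same theory, and the subsequent chain of equivalences is the same bookkeeping you describe.
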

 
 \begin{proof}
 We prove the zig case. Zag is similar. Suppose $U \recis V$ and $U' \sle U$. Let $\Phi$ be a witnessing isomorphism.
 We define $V'$ as $\verz{\Phi(\phi) \mid \phi \in U'}$.
 We have: 
 \qedright
  \begin{eqnarray*}
 V' \vdash \Phi(\psi) & \Iff & \exists U'_0 \subseteq_{\sf e,fin} U'\;\; V \vdash \bigwedge_{\phi \in U'_0} \Phi(\phi) \to \Phi(\psi) \\
 & \Iff & \exists U'_0 \subseteq_{\sf e,fin} U'\;\;  V \vdash \Phi(\bigwedge_{\phi \in U'_0} \phi \to \psi) \\
 & \Iff &  \exists U'_0 \subseteq_{\sf e,fin} U'\;\; U \vdash \bigwedge_{\phi \in U'_0} \phi \to \psi \\
 & \Iff & U' \vdash \psi
 \end{eqnarray*}
 \end{proof}

Suppose $\mathcal P$ is a property of theories.
We say that $U$ is \emph{essentially $\mathcal P$} if all consistent RE extensions
(in the same language) of $U$ are $\mathcal P$. We say that $U$ is \emph{hereditarily $\mathcal P$} if all consistent RE sub-theories
of $U$ (in the same language) are $\mathcal P$. We say that $U$ is \emph{potentially $\mathcal P$} if some consistent RE extension
(in the same language) of $U$ is $\mathcal P$. 

If $\mathcal R$ is a relation between theories the use of \emph{essential} and \emph{hereditary} and \emph{potential} always
concerns the first component aka the subject. Thus, e.g., we say that \emph{$U$ essentially tolerates $V$} meaning that 
$U$ essentially has the property of tolerating $V$. Tolerance itself is defined as potential intepretation.
So $U$ essentially tolerates $V$ if $U$ essentially potentially interprets $V$.

The following insight follows immediately from Theorem~\ref{bisim}.

\begin{theorem}\label{preservation}
Suppose $\mc P$ is a property of theories that is preserved by $\recis$.  Then, so is the complement of $\mc P$ and
the property of being essentially $\mc P$. Moreover, if $\mc Q$ is also a property of theories preserved by $\recis$, then
so is the intersection of $\mc P$ and $\mc Q$.
\end{theorem}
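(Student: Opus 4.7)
The statement breaks into three independent claims, and I would handle them in the order complement, intersection, essentially, with the last being the only one that uses real content.

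For the complement, I would first note that $\recis$ is symmetric: the inverse of a recursive boolean isomorphism is again a recursive boolean isomorphism, as the calculation just before Theorem~\ref{lindenbaum} essentially shows. Hence ``$\mc P$ is preserved by $\recis$'' amounts to an iff, and then $\neg\, \mc P$ is trivially preserved. The intersection claim is equally immediate: if $U \recis V$ and $U$ has both $\mc P$ and $\mc Q$, then by preservation of each, so does $V$.

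For essentially $\mc P$, which is the substantive part, I would argue as follows. Suppose $\mc P$ is preserved by $\recis$, $U \recis V$, and $U$ is essentially $\mc P$; I want to show $V$ is essentially $\mc P$. Pick any consistent $V' \sle V$. By the zag clause of Theorem~\ref{bisim}, there is $U' \sle U$ with $U' \recis V'$. Any witnessing isomorphism $\Phi$ sends $\bot$ to $\bot$ (boolean commutation, applied to the empty conjunction, or directly from the definition), so $V' \vdash \bot \iff U' \vdash \bot$; thus $U'$ is consistent. Since $U$ is essentially $\mc P$, we get $\mc P(U')$, and by preservation of $\mc P$ under $\recis$, $\mc P(V')$. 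As $V'$ was arbitrary, $V$ is essentially $\mc P$.

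The only point where one has to be a bit careful is ensuring that the witness $U'$ produced by zag is consistent, so that it falls under the scope of ``$U$ is essentially $\mc P$''. This is the step where one really uses that the isomorphism commutes with the boolean connectives and not merely that it preserves derivability; I expect this to be the only place a reader might pause, and it is a one-line observation.
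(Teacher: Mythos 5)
Your proof is correct and is essentially the argument the paper has in mind: the paper gives no explicit proof and merely remarks that the theorem ``follows immediately from Theorem~\ref{bisim},'' which is precisely the bisimulation step you carry out. Your observation that the zag-produced $U'$ is consistent because $\Phi(\bot)=\bot$ (so that derivability of $\bot$ is transferred both ways) is exactly the point the paper silently relies on, and your handling of the complement via symmetry of $\recis$ and of the intersection by composing preservations is routine and correct.
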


We will see that we do not have an extension of Theorem~\ref{preservation} to include hereditariness.

\begin{remark}
Of course, the development above of recursive boolean isomorphism is very incomplete. It should be embedded
in a presentation of  appropriate categories. However, in the present paper, we restrict ourselves to the bare necessities.
\end{remark}

\begin{remark}
Recursive boolean isomorphism is implied by sentential congruence (the interpretation equivalent of elementary equivalence).
However, it is not preserved by mutual interpretability. 
\end{remark}

Here is a truly substantial result due to
Mikhail Peretyat'kin:  \cite[Theorem 7.1.3]{pere:fini97}

\begin{theorem}[Peretyat'kin]\label{pere}
Suppose $U$ is an RE theory with index $i$.  Then, there
is a finitely axiomatised theory $A := {\sf pere}(i)$ such that there is a recursive boolean morphism $\Phi$ between 
$U$ and $A$. Moreover, $A$ and an index of $\Phi$ can be effectively found from $i$.
\end{theorem}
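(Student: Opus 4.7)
The plan is to reduce the problem to constructing a recursive Lindenbaum isomorphism via Theorem~\ref{lindenbaum}. So, given an index $i$ of the RE theory $U$, I need to produce effectively from $i$ a finitely axiomatised theory $A$ and an RE relation $\mc E$ satisfying clauses (a)--(d) of recursive Lindenbaum isomorphism.

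First, I would construct $A$ in a signature containing a finitely axiomatised sequential theory $T$ strong enough to internally formalise syntax, sequences, and the RE axiom predicate for $U$ given by $i$. On top of $T$, the axioms of $A$ would assert the existence of an internal copy of the Lindenbaum structure of $U$: a distinguished domain carrying unary and binary operations simulating the boolean connectives of $U$ and a designated unary predicate $\mathsf{Thm}_U$ whose extension is the internal set of $U$-theorems (defined inside $T$ from $i$ via the usual proof-predicate). This uses only finitely many additional axioms, so $A$ is finitely axiomatised; if preferred, a Craig-style trick can absorb any remaining schematic content. Both $A$ and its index are evidently computable from $i$.

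Next, I would define $\mc E$ as follows. For each $U$-sentence $\phi$, let $\tau_\phi$ be its boolean parse tree with atomic $U$-formulas at the leaves. Build an $A$-sentence $\phi^{\dagger}$ by mapping each atomic leaf $\alpha$ to the $A$-sentence $\mathsf{Thm}_U(\gn \alpha)$ (i.e., ``the internal code of $\alpha$ lies in the internal theorem set'') and recomposing the tree using $A$'s native boolean connectives. Put $\phi \mathrel{\mc E} \phi^{\dagger}$. Clauses (a), (b), (d) of Theorem~\ref{lindenbaum} are essentially automatic: every $U$-sentence has an explicit image, every $A$-sentence concerning the internal copy is $A$-equivalent to some $\phi^{\dagger}$ by reflection inside $T$, and commutation with the boolean connectives is built into the definition.

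The hard part---and the real substance of Peretyat'kin's theorem---is clause (c): $U \vdash \phi \iff \psi$ must be equivalent to $A \vdash \phi^{\dagger} \iff \psi^{\dagger}$. The forward direction is routine, since formalised provability in $T$ transfers $U$-provable equivalences to $A$-provable equivalences of internal codes. The delicate direction is the converse: $A$ must not prove any ``accidental'' equivalences among internal codes beyond those forced by $U$. This is arranged by ensuring $A$ has sufficiently free models---concretely, by designing the axiomatisation so that for every pair of $U$-sentences $\phi, \psi$ with $U \nvdash \phi \iff \psi$, there is a model of $A$ in which $\mathsf{Thm}_U$ is realised by an extension separating $\gn\phi$ and $\gn\psi$ (e.g., by a consistent completion of $U$ containing exactly one of them). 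This is where Peretyat'kin's amalgamation-and-omitting-types construction does the heavy lifting, and where the main technical labor lies; I would invoke the machinery of~\cite[Ch.~7]{pere:fini97} to carry it out rather than attempt an independent reconstruction.
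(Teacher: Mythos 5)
The paper supplies no proof here: Theorem~\ref{pere} is cited as a black box (\cite[Theorem~7.1.3]{pere:fini97}), so there is nothing in the paper to compare against. But your sketch has a structural flaw that precedes any appeal to Peretyat'kin's machinery. You build $A$ over a sequential base theory $T$ strong enough to arithmetise syntax and the RE proof predicate for $U$; any such $A$ is essentially undecidable. Yet Theorem~\ref{pere} applies to \emph{every} RE theory $U$, decidable ones included, and a recursive boolean isomorphism manifestly transfers decidability: to decide $A\vdash\psi$ one decides $U\vdash\Phi^{-1}(\psi)$. So for decidable $U$ your construction would force $A$ to be simultaneously decidable and an extension of $T$, which is impossible. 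No tuning of $T$ fixes this --- any theory that internalises an arithmetised proof predicate in a usable way is already too strong.

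Independently, the proposed $\mc E$ cannot meet the demands of Theorem~\ref{lindenbaum}. Clause~(b) quantifies over \emph{all} $A$-sentences, and in your $A$ there is a rich supply of sentences in the signature of $T$ that are nowhere near $A$-equivalent to any boolean combination of $\mathsf{Thm}_U(\gn{\alpha})$'s; restricting attention to sentences ``concerning the internal copy'' is simply not allowed. Declaring (a,b,d) ``essentially automatic'' misplaces the real difficulty: (b) and (c) together are exactly where the work goes. Even clause~(c) is in trouble with the $\mathsf{Thm}_U$-coding: for $U$-independent pseudo-atoms $\alpha_0$ and $\alpha_1$, the two $\Sigma^0_1$-sentences $\mathsf{Thm}_U(\gn{\alpha_0})$ and $\mathsf{Thm}_U(\gn{\alpha_1})$ share a common arithmetical ambient in $A$ and have no reason to remain independent there. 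Your motivating idea --- encode the Lindenbaum structure of $U$ into a finitely axiomatised theory --- is the right headline, but the route through a sequential base with a provability predicate is not one that can reach it; Peretyat'kin's actual construction works with a signature calibrated so that $A$ is no stronger than $U$, precisely to avoid these obstructions.
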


There is a much simpler result that is also useful. We need a bit of preparation to formulate it.
The result is due to Janiczak \cite{jani:unde53}. See also \cite{viss:nomi22}.
Let {\sf Jan} be the theory in the language with one binary relationsymbol {\sf E} with the following (sets of) axioms.\footnote{Our theory
differs slightly from the theory considered by Janiczak in that we added {\sf J}3. We did this to make the characterisation in
Theorem~\ref{janicz} as simple as possible.}
\begin{enumerate}[{\sf J}1.]
\item
{\sf E} is an equivalence relation.
\item
There is at most one equivalence class of size precisely $n$
\item
There are at least $n$ equivalence classes with at least $n$ elements.
\end{enumerate}

We define ${\sf A}_n$ to be the sentence: there exists an equivalence class of size
precisely $n+1$. It is immediate that the ${\sf A}_n$ are mutually independent over {\sf Jan}.

\begin{theorem}[Janiczak]\label{janicz}
Over {\sf Jan}, every sentence is equivalent with a boolean combination of the ${\sf A}_n$.
\end{theorem}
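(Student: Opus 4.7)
The natural plan is an Ehrenfeucht--Fra\"{\i}ss\'{e} argument. Fix $n\geq 1$. I would show that any two models $M_1,M_2\models {\sf Jan}$ agreeing on the truth values of ${\sf A}_0,\ldots,{\sf A}_{n-1}$ also agree on every sentence of quantifier-depth $\leq n$. Granted this, a sentence $\phi$ of quantifier-depth $\leq n$ is, modulo ${\sf Jan}$, determined by the truth-values of ${\sf A}_0,\ldots,{\sf A}_{n-1}$, hence ${\sf Jan}$-provably equivalent to the disjunction, over the configurations $\vec t \in \verz{0,1}^n$ on which $\phi$ can hold, of the corresponding conjunctions $\bigwedge_{i<n} {\sf A}_i^{t_i}$. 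Since every sentence has finite quantifier depth, the theorem follows.

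For the $n$-equivalence I would describe a winning strategy for Duplicator in the $n$-round EF game on $(M_1,M_2)$. The crucial input from ${\sf Jan}$ is that applying {\sf J}3 at $n+1$ yields at least $n+1$ equivalence classes of size $\geq n+1$ in each model. Duplicator maintains throughout the game a partial matching between the classes already touched in $M_1$ and those already touched in $M_2$, governed by size up to the threshold $n$: classes of size $\leq n$ are matched only to classes of the same exact size in the other model (such a partner exists and is unique by the agreement on ${\sf A}_k$ for $k<n$ combined with {\sf J}2), while classes of size $>n$ are matched freely. When Spoiler picks inside a class already in play, Duplicator picks a previously unused element in the matched class; in the small case the two classes are literally the same, and in the large case size $>n$ guarantees enough fresh elements for all $n$ rounds. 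When Spoiler picks in a fresh class of size $s$, Duplicator responds in a fresh class: of exactly size $s$ if $s\leq n$ (which exists by ${\sf A}_{s-1}$), or, if $s>n$, in any unused class of size $>n$, of which there are $\geq n+1$ while at most $n$ have been used.

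The main obstacle is the uniform combinatorial bookkeeping: one must verify that under this strategy neither the supply of fresh large classes nor that of fresh elements inside matched large classes is ever exhausted during the $n$ rounds, and that the partial map remains an $E$-isomorphism. Both requirements follow directly from the $(n+1)$-instance of {\sf J}3, combined with the agreement on ${\sf A}_0,\ldots,{\sf A}_{n-1}$ and the uniqueness clause {\sf J}2; the preservation of $E$ is automatic from the class-matching formulation of the strategy.
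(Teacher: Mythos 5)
The paper cites this theorem (Janiczak 1953) without giving a proof, so there is no internal proof to compare against; I assess your argument on its own. Your Ehrenfeucht--Fra\"{\i}ss\'{e} plan is correct and is the natural way to prove this. The structural ingredients you use are exactly the three axiom groups: {\sf J}1 makes classes well defined, {\sf J}2 gives uniqueness so that matching small classes by exact size is well defined, and the instance of {\sf J}3 at $n{+}1$ guarantees that neither player can exhaust the supply of large (size $\geq n{+}1$) classes or the elements inside them within $n$ rounds. Once Duplicator wins the $n$-round game on any two models agreeing on ${\sf A}_0,\ldots,{\sf A}_{n-1}$, every sentence of quantifier rank $\leq n$ is {\sf Jan}-equivalent to the disjunction over realizable truth-assignments of $\bigwedge_{i<n}{\sf A}_i^{\pm}$, and as every sentence has finite rank the theorem follows.

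Two small points, neither fatal. First, when Spoiler plays inside a small matched class you write that ``the two classes are literally the same''; they live in different models, so what you mean (and what is true) is that they have the same cardinality, hence there is always a fresh partner available, and if Spoiler repeats an earlier element Duplicator must repeat the corresponding earlier one to keep the map injective--well-defined. Second, the bookkeeping bound can be tightened: before Duplicator's $k$-th response at most $k-1\leq n-1$ classes of $M_2$ are in play, so $n{+}1$ large classes is more than enough; your weaker ``at most $n$'' still works. With those details made explicit, the proof is complete and elementary.
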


{\sf Jan} will not be recursively boolean isomorphic to propositional logic with countably propositional variables in our narrow sense, since,
in {\sf Jan}, there will be sentences equivalent to e.g. ${\sf A}_0$ that are not \emph{identical} to a 
boolean combination of ${\sf A}_i$. However,
 {\sf Jan}  will be recursively Lindenbaum isomorphic to propositional logic.

Let $U$ be any theory. Remember that we work with a bijective coding for the $U$-sentences.
We define ${\sf jprop}(U)$ by ${\sf Jan}$ plus all sentences of the form ${\sf A}_{\phi \wedge \psi} \iff
({\sf A}_{\phi} \wedge {\sf A}_{\psi})$, plus similar sentences for the other boolean connectives, plus all ${\sf A}_\phi$, whenever $U \vdash \phi$.
Clearly, we can effectively find an index of  ${\sf jprop}(U)$ from an index of $U$. We find:

\begin{theorem}\label{lekkerbeksmurf}
$U$ is recursively boolean isomorphic with ${\sf jprop}(U)$.
\end{theorem}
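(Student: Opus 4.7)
The plan is to invoke Theorem~\ref{lindenbaum}. I would define the recursive relation $\mc E$ between $U$-sentences and ${\sf jprop}(U)$-sentences by $\phi \mathrel{\mc E} \phi'$ iff $\phi' = {\sf A}_\phi$, identifying $\phi$ with its G\"odel number. It then suffices to show that $\mc E$ witnesses a recursive Lindenbaum isomorphism; Theorem~\ref{lindenbaum} will then yield the desired recursive boolean isomorphism.

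The key ingredient I would establish first is the equivalence $U \vdash \phi$ iff ${\sf jprop}(U) \vdash {\sf A}_\phi$. The forward direction is immediate from the axioms of ${\sf jprop}(U)$. For the converse, given $U \not\vdash \phi$, I fix a $U$-model $\mc M$ with $\mc M \not\models \phi$ and set $v(\psi) := [\mc M \models \psi]$. I then build a model $\mc N$ of {\sf Jan} consisting of exactly one equivalence class of size $n+1$ for each $n$ such that $v(n) = 1$ (where $n$ is identified with the $U$-sentence it codes), plus infinitely many infinite equivalence classes to witness {\sf J}3. By construction, $\mc N \models {\sf A}_n$ iff $v(n) = 1$. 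Since $v$ respects the boolean connectives, $\mc N$ satisfies the commutation axioms of ${\sf jprop}(U)$; since $\mc M \models U$, all axioms ${\sf A}_\psi$ with $U \vdash \psi$ hold in $\mc N$; and $\mc N \not\models {\sf A}_\phi$ because $v(\phi) = 0$. Hence ${\sf jprop}(U) \not\vdash {\sf A}_\phi$.

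Given this key equivalence, the conditions (a)--(d) for recursive Lindenbaum isomorphism are straightforward. For (a), take $\chi := \phi$ and $\chi' := {\sf A}_\phi$. For (b), given a ${\sf jprop}(U)$-sentence $\phi'$, Theorem~\ref{janicz} supplies a boolean combination $B$ of atoms ${\sf A}_{n_1}, \ldots, {\sf A}_{n_k}$ with $\phi' \sim' B$; the commutation axioms then reduce $B$ to ${\sf A}_\chi$, where $\chi$ is the corresponding boolean combination of the $U$-sentences with G\"odel numbers $n_i$, so take $\chi' := {\sf A}_\chi$. Conditions (c) and (d) follow by pushing boolean structure through ${\sf A}_{(\cdot)}$ via the commutation axioms and then invoking the key equivalence; for instance, ${\sf A}_{\phi_0} \sim' {\sf A}_{\phi_1}$ iff ${\sf jprop}(U) \vdash {\sf A}_{\phi_0 \iff \phi_1}$ iff $U \vdash \phi_0 \iff \phi_1$ iff $\phi_0 \sim \phi_1$, and for (d) with conjunction one uses ${\sf A}_{\phi \wedge \psi} \sim' {\sf A}_\phi \wedge {\sf A}_\psi$ together with ${\sf jprop}(U) \vdash {\sf A}_{\chi \iff (\phi \wedge \psi)}$ (from $U \vdash \chi \iff (\phi \wedge \psi)$).

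The only genuine obstacle is the converse direction of the key equivalence, which requires the explicit semantic construction of a {\sf Jan}-model with prescribed truth values for the ${\sf A}_n$'s; everything else is manipulation of the defining axioms of ${\sf jprop}(U)$ together with Janiczak's normal-form Theorem~\ref{janicz}.
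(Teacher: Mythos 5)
Your proposal takes exactly the paper's approach: define $\mathcal E$ by $\phi \mathrel{\mathcal E} {\sf A}_\phi$ and invoke Theorem~\ref{lindenbaum}. The paper dismisses the verification that $\mathcal E$ witnesses a recursive Lindenbaum isomorphism as ``easily seen''; your detailed check---in particular the model-theoretic argument that ${\sf jprop}(U) \vdash {\sf A}_\phi$ implies $U\vdash \phi$, via a {\sf Jan}-model built from a $U$-model together with infinitely many infinite equivalence classes---is correct and supplies exactly the missing content.
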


\begin{proof}
We define $\phi \mathrel{\mc E} \phi'$ iff $\phi' = {\sf A}_\phi$. It is easily seen that $\mc E$ witnesses recursive Lindenbaum isomorphism between
$U$ and ${\sf jprop}(U)$. So, $U$ and ${\sf jprop}(U)$ are recursively isomorphic, by Theorem~\ref{lindenbaum}.
\end{proof}

\subsection{Incompleteness and Undecidability}

We write ${\sf W}_i$ for the RE set with index $i$.
We define the following notions. 
We assume in all cases that $U$ is consistent and RE.
\begin{itemize}
\item 
$U$ is \emph{recursively inseparable} iff $\downp{U}$ and $\downr{U}$ are
recursively inseparable.
\item
$U$ is \emph{effectively inseparable} iff $\downp{U}$ and $\downr{U}$ are
effectively inseparable. This means that there is a partial recursive function $\Phi$ such
that, whenever $\downp{U}\subseteq {\sf W}_i$, $\downr{U}\subseteq {\sf W}_j$, and ${\sf W}_i \cap {\sf W}_j = \emptyset$,
we have $\Phi(i,j)$ converges and $\Phi(i,j) \not\in {\sf W}_i\cup {\sf W}_j$.
We can easily show that $\Phi$ can always taken to be total.
\item
$U$ is \emph{effectively essentially undecidable}, iff, there is a partial recursive $\Psi$, such that, for every consistent RE extension $V$ of $U$ with index $i$, 
we have $\Psi(i)$ converges and $\Psi(i) \not\in \downp{V} \cup \downr{V}$.
\end{itemize}

The second and third of these notions turn out to coincide. This was proven by Marian Boykan Pour-El. See \cite{pour:effe68}.

\begin{theorem}[Pour-El]\label{tuinsmurf}
A theory is effectively inseparable iff it is effectively essentially undecidable.
\end{theorem}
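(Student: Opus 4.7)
The equivalence has two directions of quite different character.

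\emph{Effective inseparability implies effective essential undecidability.} This direction is direct and uniform. Let $\Phi$ witness the effective inseparability of $\downp U$ and $\downr U$. From an index $i$ of any consistent RE extension $V$ of $U$ one primitive-recursively computes indices $\mathfrak p(i),\mathfrak r(i)$ of the RE sets $\downp V$ and $\downr V$. These two sets are disjoint (by consistency of $V$) and contain $\downp U$ and $\downr U$ respectively (by $V\supseteq U$). Hence $\Phi(\mathfrak p(i),\mathfrak r(i))$ lies outside $\downp V\cup\downr V$, and setting $\Psi(i):=\Phi(\mathfrak p(i),\mathfrak r(i))$ yields the required witness.

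\emph{Effective essential undecidability implies effective inseparability.} This direction is the substantive one and proceeds by a recursion-theoretic self-reference argument, following Pour-El. Let $\Psi$ witness effective essential undecidability. Given indices $i,j$ of disjoint RE sets $A=W_i\supseteq\downp U$ and $B=W_j\supseteq\downr U$, I apply the $s$-$m$-$n$ theorem together with Kleene's recursion theorem, with $i,j$ as parameters, to produce an index $e=e(i,j)$ of an RE theory $V_e\supseteq U$ specified as follows: enumerate $W_i$ and $W_j$ in parallel; the moment $\Psi(e)$ first surfaces in $W_i$, add $\neg\Psi(e)$ as an axiom; the moment it first surfaces in $W_j$, add $\Psi(e)$. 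The disjointness of $A$ and $B$ forces at most one axiom ever to be added. Set $\Phi(i,j):=\Psi(e)$. The intended argument is: if $\Psi(e)\in A$, then $V_e\vdash\neg\Psi(e)$, so, provided $V_e$ is consistent, applying effective essential undecidability to $V_e$ at its own index $e$ gives $\Psi(e)\notin\downr V_e$, contradicting $V_e\vdash\neg\Psi(e)$; the case $\Psi(e)\in B$ is dual. Hence $\Psi(e)\notin A\cup B$.

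\emph{Main obstacle.} The delicate step is ensuring that $V_e$ is actually consistent whenever an axiom has been added, since essential undecidability applies only to consistent extensions; naively $V_e$ could be inconsistent precisely when $U$ itself already decides $\Psi(e)$. The standard remedy is to enlarge the monitored enumerations to include $\downp U$ and $\downr U$, so that the candidate axiom is withheld on evidence that $\Psi(e)$ is $U$-decided; the branches in which no axiom is added are then handled by applying essential undecidability to $V_e=U$, whose conclusion $\Psi(e)\notin\downp U\cup\downr U$ combined with $\downp U\subseteq A$, $\downr U\subseteq B$ and $A\cap B=\emptyset$ still routes every case to a contradiction. Arranging this bookkeeping so that the recursion-theoretic fixed point genuinely delivers a consistent $V_e$ in every added-axiom scenario, without circularity, is the technical heart of the argument; the detailed construction is worked out in Pour-El's paper \cite{pour:effe68}.
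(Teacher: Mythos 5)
The paper does not actually prove Theorem~\ref{tuinsmurf}; it simply cites Pour-El's paper \cite{pour:effe68}. So there is no in-paper proof to compare against, and I will judge your argument on its own terms.

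Your first direction (effective inseparability implies effective essential undecidability) is correct as stated.

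The second direction has a genuine gap, and it is not the one you flag. You set $A=W_i\supseteq\downp U$, $B=W_j\supseteq\downr U$ and, when $\Psi(e)$ enters $W_i$, put $\neg\Psi(e)$ into $V_e$ (dually for $W_j$). With this polarity, the inconsistency case is unresolvable: if $V_e=U+\neg\Psi(e)$ is inconsistent, then $U\vdash\Psi(e)$, i.e.\ $\Psi(e)\in\downp U\subseteq W_i$, which is perfectly compatible with the hypothesis $\Psi(e)\in W_i$, and no contradiction results. Your proposed patch---also enumerate $\downp U,\downr U$ and withhold the axiom on evidence of $U$-decidedness---cannot be implemented for an RE theory: whether $\Psi(e)\notin\downp U$ is a $\Pi^0_1$ condition, so either you commit the axiom at some finite stage and may be surprised by a later $U$-proof of $\Psi(e)$ (inconsistency again), or you wait indefinitely and then the case $\Psi(e)\in W_i\setminus\downp U$ is never acted upon, so $V_e=U$ and no contradiction with $\Psi(e)\in W_i$ is obtained.

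The actual fix is just to reverse the polarity, and then no monitoring of $\downp U,\downr U$ is needed at all. Define $V_e$ by: if $\Psi(e)$ enters $W_i$ first, add $\Psi(e)$; if it enters $W_j$ first, add $\neg\Psi(e)$; otherwise $V_e:=U$. Suppose $\Psi(e)\in W_i$, so $V_e=U+\Psi(e)$. If $V_e$ is consistent, effective essential undecidability gives $\Psi(e)\notin\downp V_e$, but $\Psi(e)$ is an axiom of $V_e$; contradiction. If $V_e$ is inconsistent, then $U\vdash\neg\Psi(e)$, so $\Psi(e)\in\downr U\subseteq W_j$, contradicting $\Psi(e)\in W_i$ and $W_i\cap W_j=\emptyset$. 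The case $\Psi(e)\in W_j$ is symmetric: $V_e=U+\neg\Psi(e)$, and inconsistency would give $\Psi(e)\in\downp U\subseteq W_i$, again violating disjointness. In short, the disjointness of $W_i$ and $W_j$ is exactly what kills the inconsistency branch, but only if the sentence added agrees in sign with the side on which it was spotted.
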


\noindent
Clearly,
recursively inseparable implies essentially undecidable. 
Andrzej Ehrenfeucht, in his paper \cite{ehre:sepa61}, provides an example of an essentially undecidable theory that is not
recursively inseparable. So there is no non-effective equivalent of Theorem~\ref{tuinsmurf}.

The next theorem is due to  Marian Boykan Pour-El and Saul Kripke. See \cite[Theorem 2]{pour:dedu67}.

\begin{theorem}[Pour-El {\&} Kripke]
Consider any two effectively inseparable theories $U_0$ and $U_1$. Then, $U_0$ and $U_1$ are recursively boolean isomorphic.
Moreover, an index of the isomorphism can be found effectively from the indices of the theories and the indices of the witnesses of
effective inseparability.
\end{theorem}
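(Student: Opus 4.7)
The plan is to produce, effectively from the given indices $i_0, i_1$ and the indices of the effective inseparability witnesses $\Phi_0, \Phi_1$, an RE relation $\mathcal{E}$ between $U_0$- and $U_1$-sentences that witnesses a recursive Lindenbaum isomorphism. Once this is in hand, Theorem~\ref{lindenbaum} delivers the recursive boolean isomorphism, with an index that is uniform in the inputs.

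I would build $\mathcal{E}$ by a back-and-forth in stages, modelled on the enumeration strategy used in the proof of Theorem~\ref{lindenbaum}. Boolean compounds are to be handled componentwise: once $\phi_0 \mathrel{\mathcal{E}} \phi'_0$ and $\phi_1 \mathrel{\mathcal{E}} \phi'_1$ have been placed, I put $(\phi_0 \wedge \phi_1) \mathrel{\mathcal{E}} (\phi'_0 \wedge \phi'_1)$, and similarly for the other connectives. This secures condition (d) by construction and, provided every pseudo-atom on either side eventually gets a partner, also gives (a) and (b).

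The substantive part is the assignment of pseudo-atoms. For a pseudo-atom $\phi$ on the $U_0$ side that has not yet been matched, the task is to exhibit a pseudo-atom $\phi'$ on the $U_1$ side satisfying $U_0 \vdash \phi$ iff $U_1 \vdash \phi'$ and $U_0 \vdash \neg\,\phi$ iff $U_1 \vdash \neg\,\phi'$, and such that, when $\phi$ is $U_0$-independent, $\phi'$ is $U_1$-independent. For this I would invoke the recursion theorem together with $\Phi_1$: define in a self-referential way RE sets $A \supseteq \downp{U_1}$ and $B \supseteq \downr{U_1}$ that additionally contain a specified tag sentence whenever a $U_0$-proof (resp.\ $U_0$-refutation) of $\phi$ is observed; applying $\Phi_1$ to indices of $A$ and $B$ and closing the loop via the recursion theorem produces the desired pseudo-atom. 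Freshness with respect to previous assignments is cheap, since prefixing an unused vacuous quantifier turns any sentence into a $U_1$-equivalent pseudo-atom not yet seen; condition (c) can be preserved by consulting the finite history before each new commitment. The reverse direction of the back-and-forth is symmetric and uses $\Phi_0$.

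The main obstacle is the fixed-point construction in the independent case: one needs $\phi'$ to land in neither $\downp{U_1}$ nor $\downr{U_1}$ when $\phi$ is $U_0$-independent, yet to land in the correct one of the two bins when $\phi$ is decided by $U_0$. Packaging the three cases into a single RE pair and closing the loop with the recursion theorem is the standard Pour-El--Kripke trick, but the delicate bookkeeping is to check that no spurious race between the enumerations of $\downp{U_0}$ and $\downr{U_0}$ can tip $\phi'$ into the wrong bin — so that the output of $\Phi_1$ really certifies independence on the $U_1$ side exactly when $\phi$ is independent on the $U_0$ side.
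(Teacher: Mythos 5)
The paper does not prove this result; it cites it directly to Pour-El and Kripke, so there is no internal proof to compare against. Your overall architecture --- build an RE relation $\mathcal E$ witnessing a recursive Lindenbaum isomorphism by a back-and-forth and then invoke Theorem~\ref{lindenbaum} --- is reasonable and lines up with the scaffolding the paper provides. However, the core step is wrong as stated, and a more important idea is missing. Your fixed-point setup has the signs backwards. If $A \supseteq \downp{U_1}$ receives the tag $\psi$ upon seeing a $U_0$-proof of $\phi$ and $B \supseteq \downr{U_1}$ receives it upon a $U_0$-refutation, then in the case $U_0 \vdash \phi$ we get $A = \downp{U_1} \cup \{\psi\}$ and $B = \downr{U_1}$; if $\psi \notin \downr{U_1}$ these are disjoint RE supersets, so effective inseparability forces $\psi = \Phi_1(a,b) \notin A \cup B$, contradicting $\psi \in A$. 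Thus $\psi \in \downr{U_1}$ --- the wrong bin. The tags must be \emph{crossed}: $A$ receives $\psi$ when a $U_0$-refutation of $\phi$ is observed, $B$ when a $U_0$-proof is observed, so that the avoidance guarantee of $\Phi_1$ pushes $\psi$ onto the intended side of $U_1$.

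The larger gap is that even with the signs corrected, your per-sentence construction only matches the isolated three-way status (theorem, refutable, independent) of $\phi$ and $\phi'$. That is far weaker than condition (c) for a recursive Lindenbaum isomorphism, which requires every boolean relation among already-committed pairs to be preserved. For instance, if $\phi_0, \phi_1$ are both $U_0$-independent but $U_0 \vdash \phi_0 \to \phi_1$ and $U_0 \nvdash \phi_1 \to \phi_0$, then $\phi'_0, \phi'_1$ must stand in exactly the same asymmetric relation over $U_1$; ``each is independent'' does not give this, and ``consulting the finite history'' names the problem without solving it. What is actually needed when placing $\phi_k$ is that $\phi'_k$ occupy, in the finite boolean algebra generated over the $U_1$ Lindenbaum algebra by $\phi'_0, \dots, \phi'_{k-1}$, the same position that $\phi_k$ occupies relative to $\phi_0, \dots, \phi_{k-1}$ over $U_0$: for each of the $2^k$ conjunctions $C'$ of $\pm\phi'_i$, the status of $\phi'_k$ conditional on $C'$ must match the status of $\phi_k$ conditional on the corresponding $C$. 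Showing that effective inseparability of $\downp{U_1}, \downr{U_1}$ propagates uniformly to the ``provable under $C'$''/``refutable under $C'$'' pairs on each consistent region, and running a simultaneous fixed-point through those regions, is precisely the substantive content of Pour-El and Kripke's theorem that your sketch presupposes rather than supplies.
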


The following result is  \cite[Chapter I, Lemma, p15]{tars:unde53} and  \cite[Chapter I, Theorem 1]{tars:unde53}.

\begin{theorem}[Tarski, Mostowski, Robinson]\label{poco}
Suppose the theory $U$ is decidable. Then, $U$ has a complete decidable extension $U^\ast$.
In other words, decidable theories are potentially complete. As a direct consequence, potential decidability and
potential completeness coincide, or, equivalently, essential undecidability and essential incompleteness are extensionally the same.
\end{theorem}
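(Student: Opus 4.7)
My plan is to prove the main claim by the classical effective Lindenbaum construction, then read off the two ``potential/essential'' equivalences from Post's trick.

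For the main statement, fix a recursive enumeration $\phi_0,\phi_1,\dots$ of all sentences of the $U$-language. I would construct a sequence $U=U_0\sls U_1 \sls \cdots$ of decidable extensions by recursion. At stage $n$, given $U_n$, ask whether $U_n \cup \verz{\phi_n}$ is consistent; because $U_n$ differs from $U$ by only finitely many sentences $\psi_0,\dots,\psi_{n-1}$, this question reduces to checking whether $U\vdash \bigwedge_{i<n}\psi_i \to \neg\,\phi_n$, which is decidable since $U$ is decidable. If the answer is ``consistent'' set $U_{n+1}:= U_n \cup \verz{\phi_n}$, otherwise set $U_{n+1}:= U_n \cup \verz{\neg\,\phi_n}$. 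Put $U^\ast := \bigcup_n U_n$. By construction $U^\ast$ contains $\phi_n$ or $\neg\,\phi_n$ for every $n$, so it is complete; it is consistent because each $U_n$ is; and it is decidable because to determine whether $U^\ast \vdash \phi_n$ it suffices to run the construction through stage $n+1$ and inspect which of $\phi_n$, $\neg\,\phi_n$ was inserted.

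For the ``direct consequence'', I would use the standard observation that a consistent, complete, RE theory $V$ is automatically decidable: to test $\phi$, simultaneously enumerate the theorems of $V$ until either $\phi$ or $\neg\,\phi$ appears, and completeness guarantees termination. Hence potential completeness trivially entails potential decidability. The converse direction uses the main claim: if $U$ has a consistent RE decidable extension $V$, then $V$ has a complete decidable extension $U^\ast$ by what was just proved, and $U^\ast$ is a complete consistent RE extension of $U$. Taking contrapositives yields the equivalence of essential undecidability with essential incompleteness.

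The only real delicate point is ensuring that the decision procedures glue together uniformly, i.e.\ that one has a single algorithm deciding $U^\ast$ rather than an algorithm per sentence; but this is automatic from the uniformity of the recursive construction above, since the map $n\mapsto U_{n+1}$ is primitive recursive in the decision procedure for $U$. Once this is set up, verifying the three properties of $U^\ast$ (consistency, completeness, decidability) is routine.
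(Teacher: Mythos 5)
Your proof is correct and is the standard effective Henkin--Lindenbaum argument. The paper itself does not reprove Theorem~\ref{poco} but merely cites Tarski, Mostowski and Robinson (Chapter I, Lemma on p.~15 and Theorem~1 of \cite{tars:unde53}), and the classical proof there is exactly the one you reconstruct: effectivize Lindenbaum's lemma by deciding, stage by stage, whether $U_n\cup\{\phi_n\}$ is consistent via a single oracle call to the decision procedure for $U$ on the sentence $\bigwedge_{i<n}\psi_i\to\neg\,\phi_n$, and read off decidability of $U^\ast$ from the fact that membership of $\phi_n$ is settled at stage $n+1$; the reduction of the ``potential/essential'' equivalences to Post's observation that a consistent, complete, RE theory is decidable is likewise the intended route. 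All the steps check out: $U_{n+1}$ is consistent whenever $U_n$ is (in the inconsistent branch $U_n\vdash\neg\,\phi_n$, so adding $\neg\,\phi_n$ is harmless), $U^\ast$ is consistent by compactness, and the decision procedure for $U^\ast$ is uniform because the stage transition is uniformly recursive in the decision procedure for $U$. The derived equivalence is also handled correctly, including noting that the complete decidable $U^\ast$ is in particular RE, which is needed to match the paper's definition of ``potentially complete'' in terms of consistent RE extensions.
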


\emph{Caveat emptor}: If we, e.g., restrict ourselves to finite extensions, the equivalence between 
essential undecidability and essential incompleteness fails. So, it is good to recognise these as different
notions even if they are extensionally the same.

The next result is fundamental is the study of hereditariness. It is  \cite[Chapter I, Theorem 5]{tars:unde53}.

\begin{theorem}[Tarski, Mostowski, Robinson]\label{finexd}
Suppose the theory $U$ is decidable and $\phi$ is a sentence in the $U$-language.
Then, $U+\phi$ is also decidable.
\end{theorem}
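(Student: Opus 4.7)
The plan is to reduce derivability in $U+\phi$ to derivability in $U$ via the deduction theorem. Specifically, for any sentence $\psi$ in the $U$-language, we have
\[
U + \phi \vdash \psi \quad \Longleftrightarrow \quad U \vdash \phi \to \psi.
\]
This is simply the deduction theorem for classical predicate logic (and is unproblematic here because $\phi$ is a sentence, so there are no free variables to worry about).

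Given this equivalence, I would proceed as follows. Suppose we are presented with an arbitrary $\psi$ in the $U$-language and wish to decide whether $U+\phi \vdash \psi$. The construction of the sentence $\phi \to \psi$ from $\psi$ is effective, and since $\phi$ and $\psi$ live in the same language, $\phi \to \psi$ is itself a sentence in the $U$-language. By the assumed decidability of $U$, we can decide whether $U \vdash \phi \to \psi$, and by the displayed equivalence this gives us the answer for $U+\phi \vdash \psi$. Hence $U+\phi$ is decidable.

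There is no real obstacle here; the only thing to check is that the deduction theorem is applicable, which requires $\phi$ to be a closed formula (a sentence), as is explicitly assumed. Note that the argument works uniformly: from an index of a decision procedure for $U$ and the sentence $\phi$, we effectively obtain an index of a decision procedure for $U+\phi$.
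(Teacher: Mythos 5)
Your proof is correct and is exactly the standard deduction-theorem argument; the paper cites this result from Tarski--Mostowski--Robinson without reproducing a proof, but its proof of the analogous Theorem~\ref{snaaksesmurf} in Appendix~\ref{paraloca} (the version with added constants) proceeds by precisely this reduction, so you have given essentially the paper's own approach. One small point worth making explicit: to conclude that $U+\phi$ (as an RE theory with a specified axiom set) is decidable, one identifies its theorem set with $\{\psi \mid U \vdash \phi\to\psi\}$, which is what your equivalence does; the uniformity remark at the end is correct and is exactly the effectivity that Theorem~\ref{finexd} is invoked for elsewhere in the paper.
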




%

\section{Essential Hereditary Undecidability: A First Look}
In this section, we collect the basic facts about Essential Hereditary Undecidability and provide a
selection of examples.

\subsection{Characterisations}
We give with two pleasant characterisations of essential hereditary undecidability.

\begin{theorem}
A theory $U$ is essentially hereditarily undecidable iff,
for every $W$ in the $U$-language, if $U+W$ is consistent, then $W$ is undecidable.
\end{theorem}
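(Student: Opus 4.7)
The plan is to unpack the definitions and observe that the two-layered quantification (\emph{essential} over extensions, \emph{hereditary} over sub-theories) collapses to a one-layered condition phrased in terms of compatibility with $U$.

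First I would state the definitions explicitly: $U$ is essentially hereditarily undecidable iff, for every consistent RE extension $V$ of $U$ in the $U$-language, every consistent RE sub-theory $W \sls V$ in the $U$-language is undecidable.

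For the forward direction, I take a theory $W$ in the $U$-language with $U+W$ consistent. Then $V := U+W$ is itself a consistent RE extension of $U$ in the $U$-language, so by essential hereditary undecidability it is hereditarily undecidable. Since $W$ is a consistent (because $W \sls V$ and $V$ is consistent) RE sub-theory of $V$ in the same language, we conclude that $W$ is undecidable.

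For the converse, suppose the right-hand condition holds. Let $V$ be any consistent RE extension of $U$ and let $W \sls V$ be any consistent RE sub-theory in the $U$-language. Then $U \sls V$ and $W \sls V$, so $U+W \sls V$; since $V$ is consistent, $U+W$ is consistent. By hypothesis $W$ is then undecidable, which is exactly what hereditary undecidability of $V$ demands. Hence $U$ is essentially hereditarily undecidable.

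The argument is essentially a bookkeeping exercise; there is no genuine obstacle. The only point worth flagging is the small manoeuvre that lets one pass from the nested ``extension-then-sub-theory'' picture to the single compatibility condition, namely the observation that $U+W \sls V$ whenever $U \sls V$ and $W \sls V$, together with the fact that consistency is preserved downward along $\sls$.
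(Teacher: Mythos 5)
Your proof is correct and is exactly the argument the paper compresses into a single sentence ("this is immediate since $W$ is consistent with $U$ iff, for some consistent $V$, we have $U \sls V \sle W$"). You have simply unpacked that observation, with $V := U + W$ supplying the witness in the forward direction and the monotonicity of $\sls$ together with downward preservation of consistency handling the converse.
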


\begin{proof}
This is immediate since $W$ is consistent with $U$ iff, for some consistent $V$, we have $U \sls V\sle  W$. 
\end{proof}

\noindent
We note that, more generally,  $U$ is essentially hereditarily $\mc P$ iff,
for every $W$ in the $U$-language, if $U+W$ is consistent, then $W$ is $\mc P$.\footnote{I owe this observation to
Taishi Kurahashi in email correspondence.}

We say that $V$ \emph{tolerates} $U$ if $V$ potentially interprets $U$. In other words, $V$ tolerates $U$ iff
there is a consistent $V'\sle  V$ such that $V' \rhd U$. Equivalently,  $V$ tolerates $U$ iff, there is a 
translation $\tau$ of the $U$-language into the $V$-language such $V+\downp{U}^\tau$ is consistent.
Finally, $V$ tolerates $U$ iff, there is a 
translation $\tau$ of the $U$-language into the $V$-language such $V+\idtb U\tau +U^\tau$ is consistent.\footnote{\label{voetsmurf}We
need small adaptations of the characterisations in terms of a translation in case we allow parameters.
For example, we have:
$V$ tolerates $U$ iff there is a 
translation $\tau$ such that \[V+\exists \vv x\, \alpha_\tau(\vv x) +\verz{ \forall \vv x \,( \alpha_\tau(\vv x) \to \phi^{\tau,\vv x}) \mid U \vdash \phi}\] is consistent.}

\begin{theorem}
Suppose $U$ is consistent.
The theory $U$ is essentially hereditarily undecidable iff every $V$ that tolerates $U$ is undecidable.
\end{theorem}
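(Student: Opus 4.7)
The plan is to reduce both directions to the subtheory characterization given in the previous theorem: $U$ is essentially hereditarily undecidable iff every theory $W$ in the $U$-language with $U+W$ consistent is undecidable.

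The backward direction is immediate. Suppose every theory that tolerates $U$ is undecidable, and let $W$ be any theory in the $U$-language with $U+W$ consistent. Then $U+W$ is itself a consistent extension of $W$ in the $W$-language that interprets $U$ via the identity translation, so $W$ tolerates $U$ and is therefore undecidable. The previous theorem then yields essential hereditary undecidability of $U$.

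For the forward direction I will argue by contrapositive: starting from a decidable $V$ that tolerates $U$, I will manufacture a decidable theory $W$ in the $U$-language compatible with $U$, contradicting the previous theorem. Pick a translation $\tau$ of the $U$-language into the $V$-language such that $V^+ := V + \idtb{U}{\tau} + U^\tau$ is consistent, and set
\[
W := \{\phi \in L_U : V + \idtb{U}{\tau} \vdash \phi^\tau\}.
\]
There are three things to verify. First, $W$ is decidable: since $\idtb{U}{\tau}$ is finitely axiomatised, Theorem~\ref{finexd} gives that $V+\idtb{U}{\tau}$ is decidable, and $\tau$ is computable. Second, $W$ is deductively closed in $L_U$, because the presence of $\idtb{U}{\tau}$ in the target is exactly what makes $\tau$ preserve logical consequence. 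Third, $U+W$ is consistent: a hypothetical inconsistency would give, by compactness, finitely many $U$-axioms $\chi_i$ and $W$-sentences $\phi_j$ together with a logical proof of $\bigwedge_i \chi_i \wedge \bigwedge_j \phi_j \to \bot$; translating that derivation inside $V+\idtb{U}{\tau}$ and combining with $V^+ \vdash \chi_i^\tau$ and $V+\idtb{U}{\tau}\vdash \phi_j^\tau$ forces $V^+ \vdash \bot$, against our choice of $\tau$.

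The main obstacle is precisely the choice of $W$. The naive $\{\phi : V \vdash \phi^\tau\}$ fails to be deductively closed, because $V$ need not contain $\idtb{U}{\tau}$, and the opposite extreme $\{\phi : V^+ \vdash \phi^\tau\}$ fixes that defect but throws away decidability, since $V^+$ augments $V$ by an infinite r.e. set of axioms. Adjoining exactly the finite identity block $\idtb{U}{\tau}$ is the sweet spot: Theorem~\ref{finexd} preserves decidability, while the presence of identity is enough for the translation to commute with logical consequence. The parametric case handled in footnote~\ref{voetsmurf} should yield to the same pattern after replacing $\idtb{U}{\tau}$ by the finite block consisting of $\idtb{U}{\tau}$ together with $\exists \vv x\, \alpha_\tau(\vv x)$, and reading $\vdash \phi^\tau$ as $\vdash \forall \vv x\,(\alpha_\tau(\vv x) \to \phi^{\tau, \vv x})$.
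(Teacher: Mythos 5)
Your proof is correct and follows essentially the same route as the paper's: both directions hinge on the same choice $W := \{\phi \mid V + \idtb{U}{\tau} \vdash \phi^\tau\}$, with decidability of $W$ inherited from $V$ via Theorem~\ref{finexd} and consistency of $U+W$ coming from the consistency of $V + \idtb{U}{\tau} + U^\tau$. The only cosmetic difference is that you phrase the forward direction as a contrapositive where the paper runs a reductio, and you spell out the deductive-closure and consistency checks that the paper leaves implicit.
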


\begin{proof}
We treat the argument for the parameter-free case. The case with parameters only requires a few obvious adaptations.

Suppose $U$ is essentially hereditarily undecidable and  $V+\idtb U\tau +U^\tau$ is consistent. 
 Let $W$ be the theory in the $U$-language axiomatised by $\verz{\phi \mid V+\idtb U\tau \vdash \phi^\tau}$.

We find that $W \vdash \phi$ iff $V +\idtb U\tau\vdash \phi^\tau$ and that $U+W$ is consistent. Hence $W$ is not decidable. Suppose 
that $V$ is decidable. Then, $V+\idtb U\tau$ is decidable and so is $W$. \emph{Quod non}. So $V$ is undecidable.

The other direction is immediate.
\end{proof}

\subsection{Essential Hereditary Incompleteness}
Clearly, incompleteness is not the same as undecidability. However, essential incompleteness is the same as essential undecidability
(by Theorem~\ref{poco}). On the other hand,  incompleteness is always preserved to sub-theories.
So, \emph{a fortiori},
essential hereditary incompleteness coincides with essential incompleteness, which coincides with essential undecidability.
For example, the decidable theory {\sf Jan} has an essentially incomplete extension.
So, essential hereditary incompleteness and essential hereditary undecidability do \emph{not} coincide.

\subsection{Closure Properties}
We prove closure of the \ehu\ theories under interpretability infima.

\begin{theorem}\label{aquinosmurf}
\begin{enumerate}[a.]
\item
Suppose $U_0$ and $U_1$ are essentially undecidable. Then $U_0\ovee U_1$ is essentially  undecidable.
\item
Suppose $U_0$ and $U_1$ are essentially hereditarily undecidable. Then $U_0\ovee U_1$ is essentially hereditarily  undecidable.
\end{enumerate}
\end{theorem}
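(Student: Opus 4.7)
The plan for part (a) is to reduce decidability of an extension of $U_0 \ovee U_1$ to decidability of an extension of $U_0$ by switching on the fresh propositional letter $P$. Given a consistent RE extension $V$ of $U_0 \ovee U_1$ in its full signature, consistency of $V$ forces at least one of $V+P$ and $V+\neg\, P$ to be consistent; without loss of generality, so is $V+P$. Since every axiom of the form $P \to \phi$ with $\phi$ a $U_0$-axiom belongs to $V$, the theory $V+P$ proves all $U_0$-axioms, and therefore $V_0 := \verz{\phi \mid \phi \text{ is a } U_0\text{-sentence and } V+P \vdash \phi}$ is a consistent RE extension of $U_0$. If $V$ were decidable, then $V+P$ would also be decidable (because $V+P \vdash \psi$ iff $V \vdash P \to \psi$), and hence so would $V_0$, contradicting the essential undecidability of $U_0$.

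For part (b), I appeal to the tolerance characterisation of essential hereditary undecidability proved just before this theorem: it suffices to show that every $V$ that tolerates $U_0 \ovee U_1$ is undecidable. Fix such a $V$, witnessed by a consistent $V' \sle V$ and an interpretation $\tau: U_0 \ovee U_1 \lhd V'$. Exactly as in (a), one of $V' + P^\tau$ and $V' + \neg\, P^\tau$ is consistent. In the first case, the restriction $\tau_0$ of $\tau$ to the $U_0$-subsignature gives an interpretation of $U_0$ in $V' + P^\tau$, because $V' \vdash P^\tau \to \phi^\tau$ for every $U_0$-axiom $\phi$; since $V' + P^\tau$ is a consistent extension of $V$, this shows that $V$ tolerates $U_0$, so $V$ is undecidable by hypothesis. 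The symmetric case handles $V' + \neg\, P^\tau$ consistent via the factor $U_1$.

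The only subtle step is verifying that $\tau_0$ really is a legitimate interpretation: its translated identity axioms $\idtb{U_0}{\tau_0}$ are provable in $V'$ (they form a sub-collection of $\idtb{U_0 \ovee U_1}{\tau}$, which $V'$ proves), and the translated $U_0$-axioms follow from the already-provable $P^\tau \to \phi^\tau$ together with $P^\tau$; the parameter-domain of $\tau$ is inherited unchanged in the parameterised setting mentioned in the paper. I do not expect this routine check to be a real obstacle. The essential content of the theorem is simply the observation that adjoining $P$ or $\neg\, P$ lets one project any consistent extension, or any tolerator, of $U_0 \ovee U_1$ onto exactly one of the two factors.
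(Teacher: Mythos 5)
Your proof is correct and is essentially the same argument as the paper's. For (b), the paper starts from the other characterisation (``for every $W$ in the $U$-language with $U+W$ consistent, $W$ is undecidable''), notes that one of $U+W+P$ or $U+W+\neg\, P$ is consistent, and observes that $W$ then tolerates one of the factors via the identical translation; your version runs the same argument but directly over an arbitrary tolerator $V$ with interpretation $\tau$, replacing $P$ by $P^\tau$ and the identity translation by the restriction $\tau_0$. Both reduce to the same fact that tolerating an \ehu\ theory forces undecidability; yours is marginally more general in presentation but the content coincides, and the side check you flag on $\idtb{U_0}{\tau_0}$ and on translations commuting with derivability is indeed routine. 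Your direct argument for (a), which the paper omits as analogous, is also fine: adjoining $P$ costs nothing decidability-wise because $V+P\vdash\psi$ iff $V\vdash P\to\psi$, and the set of $U_0$-sentences proved by $V+P$ is a consistent RE extension of $U_0$ in the $U_0$-language.
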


\begin{proof}
We just treat (b).
Let $P$ be the 0-ary predicate that `chooses' between $U_0$ and $U_1$ in $U:= U_0\ovee U_1$ and let ${\sf e}_i$ be the identical translation of the
$U_i$-language into the  $U_0\ovee U_1$-language. 
Suppose $W$ is consistent with $U$. Clearly, at least one of $U +W +P$ or $U+W+ \neg\,P$ is consistent.
Suppose  $U +W +P$ is consistent. It follows that $W$ tolerates $U_0$ as witnessed by the interpretation of $U_0$ in $U+W+P$ based on ${\sf e}_0$. So $W$ is undecidable. The other case
is similar.
\end{proof}

We show that the \ehu\ theories are upwards closed under interpretability.

\begin{theorem}\label{upsmurf}
Suppose $U$ is consistent and essentially hereditarily undecidable and $V\rhd U$. Then $V$ is essentially hereditarily undecidable.
\end{theorem}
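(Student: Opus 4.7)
The plan is to reduce this directly to the tolerance characterisation of essential hereditary undecidability proved just before, which says that (for a consistent $U$) $U$ is essentially hereditarily undecidable iff every theory that tolerates $U$ is undecidable. The strategy is simply to show that every consistent companion of $V$ tolerates $U$, so we inherit undecidability from $U$.

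Concretely, fix a theory $W$ in the $V$-language such that $V+W$ is consistent; we must show $W$ is undecidable. Let $K=\tupel{U,\tau,V}$ witness $V\rhd U$. Because every axiom $\phi$ of $U$ satisfies $V\vdash \phi^{\tau}$, and the theorems of $V$ are theorems of $V+W$, the very same translation $\tau$ witnesses $(V+W)\rhd U$. Since $V+W$ is a consistent extension of $W$ in the $V$-language that interprets $U$, the theory $W$ tolerates $U$ in the sense defined in the previous subsection. Invoking the tolerance characterisation (which applies because $U$ is assumed consistent) we conclude that $W$ is undecidable. As $W$ was an arbitrary theory in the $V$-language compatible with $V$, this gives essential hereditary undecidability of $V$.

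The only delicate point is what happens if $K$ uses parameters. In that case, by the definition of interpretation with parameters, we have $V\vdash \exists \vv x\, \alpha_\tau(\vv x)$ and $V\vdash \forall \vv x\,(\alpha_\tau(\vv x)\to \phi^{\tau,\vv x})$ for every axiom $\phi$ of $U$. Both statements remain theorems of $V+W$, so the parametric version of tolerance from footnote~\ref{voetsmurf} applies verbatim, and the argument goes through unchanged. I do not foresee a genuine obstacle: the theorem is essentially a one-line corollary of the tolerance characterisation, once one observes that enlarging a $V$-interpreter of $U$ by extra axioms preserves the interpretation.
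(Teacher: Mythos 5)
Your proof is correct, but it reaches the conclusion by a different route than the paper does. You recognize the theorem as an immediate corollary of the earlier tolerance characterisation: since $V+W$ is a consistent extension of $W$ in the same language that interprets $U$ (via the very same translation that witnesses $V\rhd U$), $W$ tolerates $U$, and the characterisation then gives undecidability of $W$ directly. The paper instead re-runs the underlying pullback construction from scratch: it takes the decidable $W$ consistent with $V$, forms $Z := \{\phi \mid W + \idtb{U}{K} \vdash \phi^K\}$ in the $U$-language, and observes that $Z$ is a decidable theory consistent with $U$, contradicting the essential hereditary undecidability of $U$. Of course the two arguments are essentially isomorphic — the characterisation theorem is itself proved by exactly that pullback — so your version simply factors through a lemma the paper had already established rather than inlining its proof. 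What your factorisation buys is brevity and modularity, including the clean parametric remark at the end; what the paper's inline version buys is self-containment and the explicit exhibition of the decidable subtheory $Z$, which is the object that drives the contradiction. Both are acceptable; yours is the cleaner exposition given that the characterisation has just been proved.
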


\begin{proof}
Suppose that $U$ is \ehu\ and  $U$ is interpretable in $V$, say via $K$. Suppose further that $W$ is a theory in the $V$-language
that is decidable and consistent with $V$.
Let  $Z := \verz{ \phi \mid W+  \idtb VK \vdash \phi^K}$. It is easy to see that $Z$ is decidable and
consistent with $V$. \emph{Quod non}.

Our proof is easily adapted to the case with parameters.
\end{proof}

\noindent
Theorem~\ref{reversosmurf} of this paper will be a strengthening of this result.

\subsection{Hereditary Undecidability}
If a theory tolerates an \ehu\ theory, then it is not just undecidable, but hereditarily undecidable.

\begin{theorem}
Suppose $U$ is essentially hereditarily undecidable and that $V$ tolerates $U$. Then $V$ is hereditarily undecidable.
\end{theorem}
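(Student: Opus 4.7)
The plan is to reduce this to the characterisation theorem just proved, namely that a consistent $U$ is essentially hereditarily undecidable iff every theory that tolerates $U$ is undecidable. So the task is to show that every consistent RE sub-theory $W$ of $V$ (in the language of $V$) itself tolerates $U$, and then quote that theorem to conclude $W$ is undecidable.

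Concretely, let $W$ be a consistent RE theory with $W \sls V$. By hypothesis, $V$ tolerates $U$, so we can fix a consistent $V' \sle V$ together with a translation $\tau$ of the $U$-language into the $V$-language witnessing $V' \rhd U$ (i.e., $V' + \idtb U\tau + U^\tau$ is consistent). First I would observe that $\sls$ is transitive: from $W \sls V$ and $V \sls V'$ we get $W \sls V'$, so $V'$ is also a consistent extension of $W$ in the $V$-language. Since $V' \rhd U$ via $\tau$, the same $V'$ and $\tau$ witness that $W$ tolerates $U$. (If we allow parameters, the footnote~\ref{voetsmurf} variant of the characterisation of tolerance applies without change, since the whole witnessing data for $V$ carries over verbatim to $W$.)

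Having shown $W$ tolerates $U$, I apply the preceding theorem: as $U$ is essentially hereditarily undecidable (and a fortiori consistent), every theory tolerating $U$ is undecidable, so $W$ is undecidable. Since $W$ was an arbitrary consistent RE sub-theory of $V$ in the $V$-language, $V$ is hereditarily undecidable.

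There is no real obstacle here; the only thing to be careful about is bookkeeping the direction of $\sls$ versus $\sle$ and noting that a witness of tolerance for $V$ is automatically a witness of tolerance for any sub-theory $W \sls V$, because tolerance is witnessed by producing \emph{some} consistent extension interpreting $U$, and any extension of $V$ is also an extension of $W$.
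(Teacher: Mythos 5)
Your proof is correct and takes essentially the same approach as the paper, which compresses it to one line (``immediate from the fact that toleration is downward closed in both arguments''); your argument is exactly the unpacking of that downward closure in the first argument followed by an appeal to the preceding characterisation theorem. One small remark: essential hereditary undecidability does not by itself entail consistency (the inconsistent theory is vacuously essentially hereditarily undecidable, since it has no consistent extensions), so the consistency of $U$ needed to invoke the characterisation theorem should be drawn from the hypothesis that $V$ tolerates $U$ --- any consistent extension interpreting $U$ forces $U$ to be consistent.
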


\begin{proof}
This is immediate from the fact that toleration is downward closed in both arguments.
\end{proof}

It would be great when the above theorem had a converse. However, the example below shows that this is not the case.
The example is a minor variation of Theorem~3.1 of \cite{hanf:mode65}. 

\begin{example}(Hanf).
We provide an example of a theory that is hereditarily undecidable but does not tolerate any essentially undecidable theory
(and, so, \emph{a fortiori} does not tolerate an \ehu\ theory).
We consider Putnam's example of a theory that is undecidable such that all its complete extensions are decidable.
See \cite[Section 6]{putn:deci53}.

We start by specifying a theory in the language of identity. Let:
\begin{itemize}
\item
$\widetilde n := \exists x_0\dots \exists x_{n-1}\,(\bigwedge_{i<j<n} x_i \neq x_j \wedge \forall y \, \bigvee_{k<n}y=x_k)$.
\end{itemize}
Let $\mc X$ be any non-recursive set. We take:
${\sf I}_{\mathcal X} := \verz{\neg\, \widetilde n \mid n\in \mathcal X}$.
Clearly, ${\sf I}_{\mathcal X}$ is non-recursive.

The theory ${\sf I}_{\mathcal X}$ has the following complete extensions: $\widetilde n$, for $n \not\in\mathcal X$ and
$\verz{\neg \,\widetilde n \mid n\in \omega}$. So there are no non-recursive complete extensions.
The theory  ${\sf I}_{\mathcal X}$ cannot be consistent with an essentially
undecidable $U$ in the same language (and, hence cannot tolerate an essentially
undecidable $V$), since ${\sf I}_{\mathcal X}+U$ would have a complete and recursive extension.
 
 We now apply Theorem~\ref{pere} (Peretyat'kin's result), to obtain a finitely axiomatised theory ${\sf J}_{\mathcal X}$ that is
  recursively boolean isomorphic to ${\sf I}_{\mathcal X}$.
 Clearly, ${\sf J}_{\mathcal X}$ will inherit the undecidability and the lack of non-recursive complete extensions from
  ${\sf I}_{\mathcal X}$. Since,
  ${\sf J}_{\mathcal X}$ is finitely axiomatised and undecidable, it will be hereditarily undecidable. 
 
 We note that the original theory ${\sf I}_{\mathcal X}$ extends the theory of pure identity in the language of pure identity. So,
 ${\sf I}_{\mathcal X}$ itself is \emph{not} hereditarily undecidable.
\end{example}

\begin{example}
We show that there are theories that are essentially undecidable and hereditarily undecidable but not essentially hereditarily undecidable.\footnote{I thank
Taishi Kurahashi for his suggestion to replace my previous concrete example by the current more general class of examples.}

Suppose $U$ is essentially hereditarily undecidable and $V$ is essentially undecidable but not hereditarily undecidable.

By Theorem~\ref{aquinosmurf}(a), we find that $U\ovee V$ is essentially undecidable. 

Suppose $W$ is a decidable sub-theory of $U\ovee V$.
Then, $W+ P$ is a sub-theory of $(U\ovee V)+P$, i.e., modulo derivability, $U+P$ in the extended language. Moreover, $W+P$ is decidable. It follows that  the consequences of $W+P$ in the
$U$-language are decidable. But these consequences are a sub-theory of $U$. A contradiction. So,  $U\ovee V$ is hereditarily undecidable.

Finally, let $Z$ be a decidable sub-theory of $V$. We extend the signature of $Z$ to the signature of $U\ovee V$ and add the axiom
 $\neg \, P$ plus axioms of the form $\forall \vv x\, R(\vv x)$, for all predicates $R$ of the $U$-signature.
The resulting theory $Z'$ is a definitional extension of $Z$ and, thus, decidable. Clearly, $Z'$ is consistent with $U \ovee V$. 
So $U\ovee V$ is not \ehu.
\end{example}

\subsection{Essentially Hereditarily Undecidable Theories}
In this subsection, we give an overview of some \ehu\ theories.

A first insight is given by Theorem~\ref{finexd} and  \cite[Chapter I, Theorem 6]{tars:unde53}.

\begin{theorem}[Tarki, Mostowski, Robinson]\label{smulsmurf}
Suppose the theory $A$ is finitely axiomatizable.
If $A$ is undecidable, then it is hereditarily undecidable. If $A$ is essentially undecidable, then $A$ is
essentially hereditarily undecidable.
\end{theorem}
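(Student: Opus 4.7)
The plan is to reduce both claims to Theorem~\ref{finexd}, which says that adding a single sentence to a decidable theory yields a decidable theory. Since $A$ is finitely axiomatized, let $\alpha$ be the conjunction of its axioms; then any theory in the language of $A$ proves every theorem of $A$ exactly when it proves $\alpha$.

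For the first claim, suppose for contradiction that some decidable $B$ satisfies $B \sls A$. By Theorem~\ref{finexd}, $B + \alpha$ is decidable. But $B + \alpha =_{\sf e} A$: on the one hand, $B + \alpha \vdash A$ since $\alpha$ axiomatizes $A$; on the other, $B \sls A$ and $A \vdash \alpha$, so every theorem of $B + \alpha$ is already a theorem of $A$. Hence $A$ is decidable, contradicting the hypothesis.

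For the second claim, assume $A$ is essentially undecidable, and suppose some $W$ in the language of $A$ is consistent with $A$ and decidable. By Theorem~\ref{finexd}, $W + \alpha$ is decidable. It is consistent because $A + W$ is consistent and $W + \alpha$ is (provably equivalent to) a subtheory of $A + W$. Finally, $W + \alpha$ extends $A$, since $\alpha$ axiomatizes $A$. Thus $W + \alpha$ is a consistent decidable extension of $A$, contradicting essential undecidability. Therefore no such $W$ can exist, so $A$ is essentially hereditarily undecidable.

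There is really no obstacle beyond careful bookkeeping: the whole argument rests on the observation that a finite axiomatization lets us collapse the passage between $A$ and an arbitrary compatible $W$ (or sub-theory $B$) into a single application of Theorem~\ref{finexd}. The only point to watch is that we stay within the language of $A$ throughout, so that ``decidable'' and ``consistent with'' refer to the same set of sentences.
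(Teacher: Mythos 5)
Your proof is correct and uses exactly the tool the paper points to: the text explicitly says the theorem is "given by Theorem~\ref{finexd}" (that adding a single sentence to a decidable theory preserves decidability), and the paper otherwise just cites Tarski--Mostowski--Robinson without reproducing a proof. Your reduction of both claims to a single application of Theorem~\ref{finexd} via the conjunction $\alpha$ of $A$'s axioms is the standard argument and the bookkeeping --- $B+\alpha =_{\sf e} A$ in the first part, $W+\alpha =_{\sf e} A+W$ a consistent decidable extension of $A$ in the second --- is all in order.
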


Theorems~\ref{smulsmurf}, \ref{pere} and \ref{lekkerbeksmurf} give us immediately the following insight: 

\begin{theorem}\label{twoposs}
Suppose $U$ is an \textup(essentially\textup) undecidable theory. Then, there are \textup(essentially\textup) undecidable theories $U_0$ and 
$U_1$ that are recursively boolean
 isomorphic to $U$ of which the first is \textup(essentially\textup) hereditarily undecidable and the second has a decidable sub-theory.
 Indices for $U_0$ and $U_1$ can be effectively found from an index of $U$. Specifically, we can take $U_0 := {\sf pere}(i)$, where $i$ is an index of $U$
 and $U_1 := {\sf jprop}(U)$.
\end{theorem}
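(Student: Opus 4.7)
The plan is to verify that the two explicit constructions suggested—$U_0 := {\sf pere}(i)$ and $U_1 := {\sf jprop}(U)$—satisfy the required properties, leaning entirely on results already established in the paper.

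First, I would record what is immediate. By Theorem~\ref{pere} (Peretyat'kin), $U_0 := {\sf pere}(i)$ is finitely axiomatised and recursively boolean isomorphic to $U$, and an index of $U_0$ together with an index of the isomorphism can be found effectively from $i$. By Theorem~\ref{lekkerbeksmurf}, $U_1 := {\sf jprop}(U)$ is recursively boolean isomorphic to $U$, with index effectively computable from that of $U$. So the effectiveness and isomorphism claims are handled.

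Next I would show that both $U_0$ and $U_1$ inherit (essential) undecidability from $U$. Decidability transfers across a recursive boolean isomorphism $\Phi$ by composing a decision procedure with $\Phi$ or $\Phi^{-1}$, so undecidability is preserved by $\recis$. For the essential case, apply Theorem~\ref{preservation}: essential undecidability is essential-(complement of decidability), and both forming complements and forming essentially-$\mc P$ preserve the $\recis$-invariance.

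To finish, $U_0$ is (essentially) hereditarily undecidable by Theorem~\ref{smulsmurf}, which says that a finitely axiomatised (essentially) undecidable theory is automatically (essentially) hereditarily undecidable—and finite axiomatisability of $U_0 = {\sf pere}(i)$ was already noted. For $U_1$, the definition of ${\sf jprop}(U)$ exhibits the decidable theory ${\sf Jan}$ as a sub-theory in the same signature (it is listed among the axioms of ${\sf jprop}(U)$), and ${\sf Jan}$ is decidable by Janiczak's characterisation in Theorem~\ref{janicz} together with the observation, recorded in the paper, that ${\sf Jan}$ is decidable. Thus $U_1$ has a decidable sub-theory.

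The only step that requires any thought is the preservation of essential undecidability under recursive boolean isomorphism; this is where Theorem~\ref{preservation} does the work, so the argument is essentially an assembly of earlier results and there is no real obstacle beyond citing them in the right order.
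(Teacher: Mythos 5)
Your proposal is correct and follows the same route the paper intends: assemble Theorem~\ref{pere}, Theorem~\ref{lekkerbeksmurf}, and Theorem~\ref{smulsmurf}, with Theorem~\ref{preservation} supplying the (implicit in the paper) transfer of essential undecidability across $\recis$. The only loose phrasing is the near-circular sentence about ${\sf Jan}$'s decidability — what you actually want to cite is that ${\sf Jan}$ is recursively Lindenbaum (hence boolean) isomorphic to countable propositional logic, from which decidability of ${\sf Jan}$ follows since decidability is $\recis$-invariant; the substance is fine.
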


The use of Theorem~\ref{pere} delivers many examples of (essentially) hereditarily undecidable theories,
 Here is, for example, Theorem~3.3  of \cite{hanf:mode65}.

\begin{theorem}[Hanf]\label{funsmurf}
Let $d$ be any non-zero RE Turing degree. Then there is a finitely axiomatised \ehu\ theory $A$ of degree $d$.
\end{theorem}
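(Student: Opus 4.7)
The plan is to combine a construction of an essentially undecidable RE theory of Turing degree $d$ with Peretyat'kin's Theorem~\ref{pere} and Theorem~\ref{smulsmurf}. Specifically, I would first produce an essentially undecidable RE theory $U$ whose set of theorems has Turing degree exactly $d$; then apply Peretyat'kin to an index $i$ of $U$, obtaining a finitely axiomatised theory $A := {\sf pere}(i)$ together with a recursive boolean isomorphism $\Phi : U \to A$; then transfer both the Turing degree and the essential undecidability of $U$ across $\Phi$; and finally invoke Theorem~\ref{smulsmurf} to upgrade essential undecidability of $A$ to essential hereditary undecidability.

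Once $U$ is in hand, the remainder is routine. Both $\Phi$ and $\Phi^{-1}$ are total recursive and $\downp{A} = \Phi(\downp{U})$, so $\downp{A}$ and $\downp{U}$ are Turing-equivalent, giving $A$ degree $d$. Essential undecidability is preserved by $\recis$ via Theorem~\ref{preservation}: decidability itself is preserved, hence so is its complement, hence so is the property of being essentially undecidable. Finite axiomatisation together with essential undecidability then yields essential hereditary undecidability by Theorem~\ref{smulsmurf}, so $A$ is the desired theory.

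The main obstacle is the first step: constructing an essentially undecidable RE theory $U$ whose theorem set has Turing degree exactly $d$. The naive attempt of extending ${\sf Q}$ by propositional axioms $\{P_n : n \in X\}$ for an RE set $X$ of degree $d$ fails, because every consistent extension of a recursively inseparable theory (in any enriched language) remains recursively inseparable: any recursive separator of $\downp{V},\downr{V}$ restricts to one of $\downp{{\sf Q}},\downr{{\sf Q}}$. Recursively inseparable RE theories have theorems of degree $\emptyset'$, so this strategy can only ever produce degree $\emptyset'$. One must therefore base the construction on an essentially undecidable theory that is \emph{not} recursively inseparable — such theories exist by Ehrenfeucht~\cite{ehre:sepa61} — and calibrate its axiomatisation against $X$ so that the resulting theorem set lands in degree exactly $d$. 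This tailored construction is the substance of Hanf's original argument.
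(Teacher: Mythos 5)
Your overall strategy---obtain an essentially undecidable RE theory $U$ of degree exactly $d$, apply Peretyat'kin's Theorem~\ref{pere} to get a finitely axiomatised $A$ recursively boolean isomorphic to $U$, transfer degree and essential undecidability across the isomorphism, and invoke Theorem~\ref{smulsmurf} to upgrade to essential hereditary undecidability---is precisely the paper's route, and those last three steps are fine. The gap is that you leave the crucial first step unresolved. The paper dispatches it in a single sentence by citing Shoenfield's \cite{sho:degr58}: for every non-zero RE degree $d$ one can effectively produce an essentially undecidable RE theory of degree $d$ (equivalently, in the form quoted elsewhere in the paper, a uniformly effective map $i\mapsto{\sf sh}(i)$ such that ${\sf W}_i$ is non-recursive iff ${\sf sh}(i)$ is essentially undecidable, with the degrees matching). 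Without that citation or a substitute construction, your proof is a reduction of the theorem to an unproved input.

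Moreover, your diagnosis of why the naive $\mathsf{Q}$-based attempt fails contains a mistake that would mislead the repair. You assert ``Recursively inseparable RE theories have theorems of degree $\emptyset'$.'' That is false: Shoenfield's own construction yields recursively inseparable pairs of RE sets of any non-zero RE degree, and, correspondingly, recursively inseparable RE theories in every non-zero RE degree (the paper uses exactly this for the theory $U_4$ in its separating-examples table). What does force degree $\emptyset'$ is \emph{effective} inseparability: effectively inseparable RE sets are creative, hence $\equiv_T\emptyset'$; since every consistent RE extension of $\mathsf{Q}$ inherits effective inseparability (by the characterisation of Pour-El, Theorem~\ref{tuinsmurf}, or directly from the witness function), your conclusion about the $\mathsf{Q}$-based attempt is right, but for the reason of effective---not recursive---inseparability. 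Consequently your implicit search constraint, that the base theory must fail to be recursively inseparable (and your gesture toward Ehrenfeucht's example), is too restrictive; one only needs to avoid \emph{effective} inseparability, which is exactly what a theory built from Shoenfield's recursively-but-not-effectively inseparable pair achieves.
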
 

\begin{proof}
By the results of \cite{sho:degr58}, there is an essentially undecidable RE theory $U$ of degree $d$. Say it has index $i$.
Clearly, ${\sf pere}(i)$ fills the bill.
\end{proof}

Using the ideas of \cite{viss:nomi22}, we can even arrange it so that the Turing degree of every theory that interprets the theory $A$ of
Theorem~\ref{funsmurf} is $\geq d$.

The next example is  due to Cobham. This result is presented in \cite{vaug:theo62}. See also \cite{viss:onq17} for an alternative presentation.
We will  prove the result in Section~\ref{esstol}.

\begin{theorem}[Cobham]\label{cobhamsmurf}
The theory {\sf R}  is essentially hereditarily undecidable.
\end{theorem}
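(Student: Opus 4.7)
My plan is to work with the direct characterisation of essential hereditary undecidability given in Section~3.1: it suffices to show that every RE theory $W$ in the arithmetical language~$\mathbb A$ which is consistent with ${\sf R}$ is undecidable. To this end, I will fix a pair of recursively inseparable RE sets $A, B \subseteq \omega$ once and for all, assume for contradiction that such a $W$ is decidable, and use $W$ to extract a recursive separator of~$A$ and~$B$, which is absurd.

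The main ingredient is a Rosser-style witness comparison. Let $\alpha_n := \exists x\, \alpha_n^0(x)$ and $\beta_n := \exists y\, \beta_n^0(y)$ be pure $1$-$\Sigma^0_1$ sentences which, uniformly in~$n$, define membership in $A$ and $B$ respectively. Form $\rho_n := (\alpha_n \leq \beta_n)$, which is again pure $1$-$\Sigma^0_1$ by the remark preceding Theorem~\ref{knutselsmurf}. Now I inspect two cases. If $n \in A \setminus B$, then $\beta_n$ has no witness, so $\rho_n$ is (vacuously) true, and the clause ``$\sigma$ true implies ${\sf R} \vdash \sigma^{\mf q}$'' of the basic theorem on $\sigma^{\mf q}$ delivers ${\sf R} \vdash \rho_n^{\mf q}$. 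If $n \in B \setminus A$, then $\alpha_n$ has no witness, so $\beta_n < \alpha_n$ is true, and Theorem~\ref{knutselsmurf}(iii) yields that $\rho_n^{\mf q}$, which equals $(\alpha_n \leq \beta_n)^{\mf q}$, is inconsistent, i.e.\ $\vdash \neg\, \rho_n^{\mf q}$ already in pure first-order logic.

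With these two facts I set $C := \{\, n : W \not\vdash \neg\, \rho_n^{\mf q}\,\}$. For $n \in A \setminus B$, $W + {\sf R}$ proves $\rho_n^{\mf q}$ and is consistent, so $W + \rho_n^{\mf q}$ is a consistent sub-theory of $W + {\sf R}$ and hence $n \in C$. For $n \in B \setminus A$, pure logic already yields $W \vdash \neg\, \rho_n^{\mf q}$, so $n \notin C$. Thus $C$ separates $A$ from $B$. If $W$ were decidable then, since $n \mapsto \neg\, \rho_n^{\mf q}$ is recursive, $C$ would be recursive as well, contradicting the recursive inseparability of $A$ and $B$.

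The only substantive point --- and the reason the argument reaches hereditariness rather than stopping at essential undecidability of ${\sf R}$ --- is that the refutation of $\rho_n^{\mf q}$ for $n \in B \setminus A$ comes entirely from logic via Theorem~\ref{knutselsmurf}(iii), so it descends to every sub-theory of $W + {\sf R}$; only the positive side of the dichotomy uses the arithmetical strength of ${\sf R}$. This asymmetry, together with the $\Sigma^0_1$-completeness packaged into $\sigma^{\mf q}$, is what lets a single Rosser-style family $(\rho_n)_{n\in\omega}$ witness both halves of the separation simultaneously.
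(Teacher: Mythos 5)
Your argument is correct and takes a genuinely different route from the paper's. The paper derives Cobham's theorem from Theorem~\ref{sigmasmurf} (${\sf R}\gmut\spth S$), Theorem~\ref{smulsmurf} (finitely axiomatised essentially undecidable theories are essentially hereditarily undecidable), and Theorem~\ref{reversosmurf} (upward preservation of essential hereditary undecidability along lax interpretability); that route presupposes the classical essential undecidability of ${\sf R}$, and in return records the sharper structural fact that ${\sf R}$ and $\spth S$ are mutually laxly interpretable. You instead run a direct Rosser-style inseparability argument. Your proof is structurally an instance of Hanf's Lemma~\ref{handigesmurf}: take $U := {\sf R}$, $U_0 :=$ pure predicate logic over $\mathbb A$ (finitely axiomatised, say by $\top$), $\Phi(n) := \neg\,\rho_n^{\mf q}$, $\mathcal X := B$, $\mathcal Y := A$. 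Then for $n\in B$, $\Phi(n)$ is a theorem of $U_0$ (pure logic, via Theorem~\ref{knutselsmurf}(iii)), while for $n\in A$, $\Phi(n)\in\downr{{\sf R}}$ since ${\sf R}\vdash\rho_n^{\mf q}$; the appeal to Theorem~\ref{finexd} inside the proof of Lemma~\ref{handigesmurf} becomes vacuous because $U_0$ is trivial. The asymmetry you single out at the end --- one direction from pure logic, the other from compatibility with ${\sf R}$'s $\Sigma^0_1$-completeness --- is exactly the asymmetry built into Hanf's lemma. Your route is more self-contained, and if one starts from an effectively inseparable pair $A, B$, it also delivers the effective inseparability of ${\sf R}$ as a by-product, which the paper's route does not record.
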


We have the following corollary of Theorem~\ref{funsmurf}.

\begin{corollary}
There are \ehu\ theories that do not interpret {\sf R} and, hence, there are \ehu\ theories strictly below {\sf R}.
\end{corollary}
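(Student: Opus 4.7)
The plan is to combine Hanf's Theorem~\ref{funsmurf} with a Turing-degree obstruction, and then use the infimum property of $\ovee$ together with Theorem~\ref{aquinosmurf}(b) to place the resulting theory strictly below ${\sf R}$. The two clauses of the corollary reduce respectively to exhibiting an \ehu\ theory $A$ with $A \not\rhd {\sf R}$, and to extracting from it an \ehu\ theory strictly interpretability-below ${\sf R}$.

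First, by the Friedberg--Muchnik theorem, pick a non-zero RE Turing degree $d$ with $d < \mathbf{0}'$, and apply Theorem~\ref{funsmurf} to obtain a finitely axiomatised \ehu\ theory $A$ of degree $d$. I claim $A \not\rhd {\sf R}$. The reason is that ${\sf R}$ is effectively inseparable (a standard consequence of the $\Sigma^0_1$-representability of RE relations in ${\sf R}$), and effective inseparability is preserved upward under interpretability: given $K\colon {\sf R} \lhd A$ with translation $\tau$, any disjoint RE separation ${\sf W}_i \supseteq \downp A$, ${\sf W}_j \supseteq \downr A$ pulls back along $\phi \mapsto \phi^\tau$ to a disjoint RE separation of $\downp{{\sf R}}$ and $\downr{{\sf R}}$, so composing the effective inseparator of ${\sf R}$ with $\tau$ yields an effective inseparator for $A$. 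An effectively inseparable RE theory has a creative set of theorems (feed ${\sf W}_k \cup \downr A$ into the inseparator against a fixed RE index of $\downp A$), hence Turing degree $\mathbf{0}'$, contradicting $d < \mathbf{0}'$.

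Next, set $B := A \ovee {\sf R}$. By Theorem~\ref{aquinosmurf}(b), $B$ is \ehu. By the infimum property of $\ovee$, $B \lhd {\sf R}$ and $B \lhd A$; if also ${\sf R} \lhd B$, then by transitivity ${\sf R} \lhd A$, contradicting $A \not\rhd {\sf R}$. So $B$ is \ehu\ and strictly below ${\sf R}$ in the interpretability ordering.

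The main obstacle is the Turing-degree chain in the first step: one must verify carefully that interpretability of ${\sf R}$ by an RE theory forces that theory's theorems to be of complete RE degree $\mathbf{0}'$, via the route ``effective inseparability of ${\sf R}$ propagates upward along $\lhd$, and effectively inseparable RE theories are creative''. None of the links is deep, but together they are the substantive content of the argument; the $\ovee$-step that follows is essentially bookkeeping using results already established in the paper.
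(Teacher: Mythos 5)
Your proof follows the paper's own argument essentially step for step: pick an \ehu\ theory $A$ of intermediate RE degree via Theorem~\ref{funsmurf}, observe $A \nrhd {\sf R}$ on degree grounds, and pass to $A\ovee {\sf R}$ using Theorem~\ref{aquinosmurf}. The only difference is that you spell out the degree obstruction (effective inseparability of ${\sf R}$ propagates up along $\lhd$, forcing degree $\mathbf{0}'$) which the paper leaves implicit, and that unpacking is correct.
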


\begin{proof}
Suppose $d$ is an RE Turing degree strictly between 0 and $0'$. By Theorem~\ref{funsmurf}, we can find an \ehu\ theory $A$ of RE degree $d$.
 If $A \rhd {\sf R}$, then the degree of $A$ would be $0'$, so, $A\nrhd {\sf R}$.
By Theorem~\ref{aquinosmurf} in combination with Theorem~\ref{cobhamsmurf}, the theory $B := A\ovee {\sf R}$ is \ehu. Moreover, since  $A\nrhd {\sf R}$, the theory $B$ is strictly below {\sf R}.

A different and more natural example is the theory ${\sf PA}^{-}_{\sf scat}$ of Section~\ref{sepasec}.
\end{proof}

A well-trodden path is the construction of essentially undecidable theories using recursively inseparable sets.
We give the basic lemma.

\begin{lemma}\label{bruikbaresmurf}
Suppose $\Phi$ is a recursive function from the natural numbers to the sentences of $U$. Let $\mathcal X,\mathcal Y$ be a pair
of recursively inseparable sets. Suppose $\Phi$ maps $\mathcal X$ to $\downp{U}$ and $\mathcal Y$ to the
$\downr{U}$. Then, $U$ is essentially undecidable. 
\end{lemma}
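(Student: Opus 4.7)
The plan is to contrapose: assume $V$ is a consistent RE extension of $U$ in the same language and that $V$ is decidable, then use $\Phi$ to produce a recursive separation of $\mathcal X$ and $\mathcal Y$, contradicting the hypothesis.

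Concretely, I would consider the set $\mathcal S := \verz{n \mid V \vdash \Phi(n)}$. Since $\Phi$ is a total recursive function and $V$-provability is assumed decidable, $\mathcal S$ is a decidable set of natural numbers. The next step is to verify that $\mathcal S$ separates $\mathcal X$ from $\mathcal Y$. If $n \in \mathcal X$, then by hypothesis $\Phi(n) \in \downp{U} \subseteq \downp{V}$, so $n \in \mathcal S$; thus $\mathcal X \subseteq \mathcal S$. If $n \in \mathcal Y$, then $\Phi(n) \in \downr{U}$, so $V \vdash \neg\, \Phi(n)$, and then the consistency of $V$ forbids $V \vdash \Phi(n)$, so $n \notin \mathcal S$; thus $\mathcal Y \cap \mathcal S = \emptyset$. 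This contradicts the recursive inseparability of $\mathcal X$ and $\mathcal Y$, hence $V$ is undecidable.

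Since $V$ was an arbitrary consistent RE extension of $U$, we conclude that $U$ is essentially undecidable. The argument is essentially a one-step reduction, so there is no real obstacle; the only point requiring minor care is invoking consistency of $V$ (rather than of $U$) at the step that shows $\mathcal S$ is disjoint from $\mathcal Y$, which is what forces the conclusion to be \emph{essential} undecidability rather than mere undecidability of $U$ itself.
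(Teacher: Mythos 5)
Your proof is correct and is exactly the standard argument; the paper states this lemma without a written proof, but your reasoning mirrors precisely the proof the paper gives for the analogous Lemma~\ref{handigesmurf} (decidability of the extension plus recursiveness of $\Phi$ makes $\{n \mid V \vdash \Phi(n)\}$ a recursive separating set, contradiction).
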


From the proof of Theorem~3.2 of \cite{hanf:mode65} we can extract the following analogue of  Lemma~\ref{bruikbaresmurf}
for the case of \ehu\ theories.

\begin{lemma}[Hanf]\label{handigesmurf}
Let $U$ be a consistent RE theory and let $U_0$ be a finitely axiomatised sub-theory of the $U$.
Suppose $\Phi$ is a recursive function from the natural numbers to the sentences of $U$. Let $\mathcal X,\mathcal Y$ be a pair
of recursively inseparable sets. 
Suppose $\Phi$ maps $\mathcal X$ to  $U_{0\mf p}$ and $\mathcal Y$ to 
$\downr{U}$. Then, $U$ is \ehu. 
\end{lemma}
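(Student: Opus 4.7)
The plan is to use the characterisation of essential hereditary undecidability from the first theorem of Section 3.1: it suffices to show that for every theory $W$ in the $U$-language with $U+W$ consistent, $W$ is undecidable. So fix such a $W$ and suppose, for contradiction, that $W$ is decidable; I will use $\Phi$ to construct a recursive separator for $\mathcal X$ and $\mathcal Y$, contradicting their recursive inseparability.

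Since $U_0$ is finitely axiomatised, let $A_0$ be a single sentence axiomatising it, so that $U_0 \vdash \phi$ iff $\vdash A_0 \to \phi$. Define
\[\mathcal Z := \{n \in \omega \mid W \vdash A_0 \to \Phi(n)\}.\]
Because $W$ is decidable and $\Phi$ is recursive, the set $\mathcal Z$ is recursive. I would then verify two inclusions. First, $\mathcal X \subseteq \mathcal Z$: for $n \in \mathcal X$ we have $\Phi(n) \in U_{0\mathfrak p}$, so $\vdash A_0 \to \Phi(n)$ and hence $W \vdash A_0 \to \Phi(n)$. Second, $\mathcal Y \cap \mathcal Z = \emptyset$: if $n$ belonged to both, then $U \vdash \neg\,\Phi(n)$ (since $\Phi(n) \in \downr U$) while $W \vdash A_0 \to \Phi(n)$; combining with $U \vdash A_0$ (which holds because $U_0 \sls U$) would give $U+W \vdash \Phi(n) \wedge \neg\,\Phi(n)$, contradicting the assumed consistency of $U+W$.

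These two facts together show that $\mathcal Z$ is a recursive set separating $\mathcal X$ from $\mathcal Y$, contradicting their recursive inseparability. Hence $W$ must be undecidable, and $U$ is essentially hereditarily undecidable.

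The only delicate point is locating the role of the hypothesis that $U_0$ is \emph{finitely} axiomatised: it is needed so that the test ``$W \vdash A_0 \to \Phi(n)$'' is a decidable condition when $W$ is decidable (a $W$-derivability predicate about a single sentence). If $U_0$ were replaced by all of $U$, one would at best obtain an RE separator, which is not enough to contradict recursive inseparability. Everything else is bookkeeping combined with the characterisation of \ehu{} theories via consistency.
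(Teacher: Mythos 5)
Your proof is correct and follows essentially the same route as the paper: the paper forms $W^\ast := W + U_0$, invokes its Theorem~\ref{finexd} to conclude $W^\ast$ is decidable, and then observes that $\{k \mid W^\ast \vdash \Phi(k)\}$ is a recursive separator of $\mathcal X$ and $\mathcal Y$; your set $\mathcal Z = \{n \mid W \vdash A_0 \to \Phi(n)\}$ is the same set by the deduction theorem, and your direct observation that this is a decidable test when $W$ is decidable is exactly the content of Theorem~\ref{finexd}. Your remark about where finite axiomatisability of $U_0$ enters is also the right diagnosis.
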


\begin{proof}
Let $U,U_0,\Phi,\mathcal X,\mathcal Y$ be as in the statement of the theorem.
Suppose $W$ is a theory in the language of $U$ that is consistent with $U$. Suppose 
$W$ is decidable. By Theorem~\ref{finexd}, we find that $W^\ast := W+U_0$ is decidable.
Moreover, $W^\ast$ is consistent with $U$.
We have:
\begin{eqnarray*}
n \in \mathcal X  & \To & U_0 \vdash \Phi(n) \\
& \To & W^\ast \vdash \Phi(n).\\
m\in \mathcal Y & \To & U \vdash \neg\, \Phi(m) \\
& \To & W^\ast \nvdash \Phi(m).
\end{eqnarray*}
It follows that $\verz{k \mid W^\ast \vdash \Phi(k)}$ is decidable and  separates $\mathcal X$ and $\mathcal Y$.
A contradiction.
\end{proof}

As we will see, in Section~\ref{esstol}, the essential hereditary undecidability of the salient theory {\sf R} is directly connected with
the essential hereditary undecidability of certain finitely axiomatised theories. The following example, due to Hanf
in \cite[Theorem 3.2]{hanf:mode65}, shows that there are very un-{\sf R}-like \ehu\ theories.
\begin{example}(Hanf).\label{nofinex}
We produce an \ehu\ RE theory $U$ that does not tolerate any finitely axiomatisable essentially undecidable theory $A$.

Let $\mathcal X$ and $\mathcal Y$ be recursively inseparable sets. Let $V := {\sf Jan} + \verz{{\sf A}_n \mid n\in \mathcal X}$.
Let $B$ be ${\sf pere}(i)$, where $i$ is an index of $V$, and let $\Psi$ be the boolean isomorphism from $V$ to $B$.
We define  ${\sf B}_i:= \Psi({\sf A}_i)$ and
$U := B + \verz{\neg\, {\sf B}_j \mid j \in \mathcal Y}$.
 By Lemma~\ref{handigesmurf}, the theory $U$ is \ehu. 
 
 Suppose  $U$ tolerates a finitely axiomatised essentially undecidable theory $A$. Then, some finite theory $C$ in the language of $U$ is consistent
 with $U$ and interprets $A$. Clearly, $C$ must itself be essentially undecidable. Now $\Psi^{-1}(C)$ is equivalent to a boolean combination of the
 ${\sf A}_i$ over $V$, so $C$ is equivalent to a boolean combination of the ${\sf B}_i$ over $B$. Let the set of the $i$ so that ${\sf B}_i$ occurs in this boolean combination be $\mathcal F$.
 Let $W := B +C + \verz{{\sf B}_i \mid i \not\in \mathcal F}$. Clearly, $W$ is consistent and decidable. A contradiction with the fact that $C$ is essentially undecidable.
 
 We note that we can get our example in any desired non-zero RE Turing degree by choosing the appropriate $\mathcal X$ and $\mathcal Y$.
\end{example}

In \cite{viss:frie22}, we show that effectively Friedman-reflexive theories are \ehu. 
We state it here as a theorem. The theorem will be a direct consequence of Theorem~\ref{luiesmurf} of this paper.

\begin{theorem} \label{timmersmurf}
 Suppose $U$ is consistent, RE, and effectively Friedman-reflexive. Then, $U$ is  essentially hereditarily undecidable.
 \end{theorem}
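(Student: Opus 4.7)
The plan is to derive essential hereditary undecidability via the recursive-inseparability route, ultimately applying Lemma~\ref{handigesmurf}. First, I would unpack the definition of effective Friedman-reflexivity from \cite{viss:frie22}: informally, it should mean that there is a recursive function producing, for each index $e$ of a finitely axiomatised sub-theory $U_e$ of $U$, an interpretation $K_e : U_e \lhd U$, uniformly in $e$. The salient feature is that this gives $U$ an effective handle on its own deductive structure, which is what we need to mimic G\"odel-style diagonalisation inside $U$ without assuming an internal arithmetic.

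Second, I would fix an effectively inseparable r.e.\ pair $\mc X, \mc Y$ and construct a recursive function $\Phi$ reducing this pair into $(\downp{U_0}, \downr U)$ for some finite $U_0 \sls U$. The construction proceeds by a Rosser-style diagonalisation: for each $n$, we produce a sentence $\phi_n$ in the $U$-language, where the self-reference is set up using the Friedman-reflexivity interpretations $K_e$ so that $\phi_n$ expresses, roughly, ``a refutation of me appears in $U$ before a proof of my Rosser twin from $U_0$''. Uniformity in $n$ yields a total recursive $\Phi$, and the effective inseparability of $\mc X, \mc Y$ should force $\Phi(\mc X) \subseteq \downp{U_0}$ and $\Phi(\mc Y) \subseteq \downr U$. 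Lemma~\ref{handigesmurf} then delivers the desired conclusion.

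The hardest step is arranging the fixed-point construction itself. Because $U$ is not assumed to contain any distinguished arithmetical fragment like {\sf R} or {\sf TN}, we cannot directly form G\"odel sentences inside $U$; instead, the interpretations $K_e$ supplied by effective Friedman-reflexivity must serve as a uniform substitute, letting us encode provability predicates and perform the diagonal lemma inside $U$ through a suitably chosen finite sub-theory. Verifying that effective Friedman-reflexivity is genuinely strong enough for this---rather than merely formal self-interpretation---is the crux of the argument; this is presumably the main content of the forthcoming Theorem~\ref{luiesmurf} and of the companion paper \cite{viss:frie22}, so that the present theorem falls out as a corollary once that machinery is in place.
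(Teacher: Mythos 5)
Your route has a genuine gap, and it starts with the unpacking of the hypothesis. Effective Friedman-reflexivity is \emph{not} the (trivially satisfiable) statement that finite subtheories $U_e \sls U$ interpret into $U$ uniformly; it is rather a statement about an interpretability-modality $\ccz$ (written $\xpossible$ in the paper's notation): roughly, $U$ can uniformly interpret a consistency-style extension of each finitely axiomatised theory it interprets, with the witnessing data found effectively. This point matters because the strength you are trying to extract from the hypothesis simply is not there in the form you assume.

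The deeper problem is the choice of Lemma~\ref{handigesmurf} as the target. That lemma requires a \emph{finitely axiomatised sub-theory $U_0$ of $U$ in the same language} that proves $\Phi(n)$ for $n \in \mathcal X$. For your Rosser construction to produce $U$-sentences that $U_0$ can verify, $U_0$ must already contain enough arithmetic to carry out the coding and the witness comparisons, and an effectively Friedman-reflexive theory need not have any finite arithmetical core sitting inside it \emph{in its own signature}. The whole point of Friedman-reflexivity is to substitute interpretability for containment: $U$ gets a grip on arithmetic via an interpretation, not via a finite subtheory. Moreover, recursive or even effective inseparability alone cannot carry the conclusion: the paper's table exhibits a theory ($U_5$) that is effectively inseparable but not essentially hereditarily undecidable, so the extra condition in Lemma~\ref{handigesmurf} (the finite $U_0$) is indispensable and is precisely the piece your plan cannot supply.

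The paper instead routes through $\Sigma^0_1$-friendliness and lax interpretability. Theorem~\ref{luiesmurf} observes that an effectively Friedman-reflexive $U$ is $\Sigma^0_1$-friendly with $\Phi(\sigma) := \ccz\sigma$; this is where the condition $\Sigma 2$ — that $(U+\Phi(\sigma)) \rhd \sigma^{\mf q}$ — captures the interpretability strength that your plan was missing. Theorem~\ref{vettesmurf} then gives $U \jump \spth S$, and since the members $\sigma^{\mf q}$ of $\spth S$ are finitely axiomatised, contain {\sf R}, and hence are essentially hereditarily undecidable, Theorem~\ref{reversosmurf} transports this property up the lax-interpretability order to $U$. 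The key conceptual move is that $\Sigma^0_1$-friendliness is strictly stronger than $\Sigma^0_1$-representativity (equivalently, effective inseparability), and that lax interpretability of a class of finite \ehu\ theories — rather than membership of a finite \ehu\ theory as subtheory — is exactly the right way to package what Friedman-reflexivity provides.
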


\section{Essential Tolerance and Lax Interpretability}\label{esstol}
In this section we study a reduction relation that interacts very well with essential hereditary undecidability.
We will prove a number of theorems that illustrate these connections.

\subsection{Basic Definitions and Facts}
Suppose $U$ is a consistent RE theory.
We remind the reader  that \emph{$U$ tolerates $V$}, or $U \tol V$, iff $U$ potentially interprets $V$, in other words,
if for some consistent RE theory $U'\sle  U$, we have $U'\rhd V$.
We find that $U$ \emph{essentially tolerates} $V$ iff $U$ essentially potentially interprets $V$, explicitly: iff, for all consistent RE theories $U'\sle  U$, there is a consistent RE theory
 $U'' \sle  U'$, such that
$U'' \rhd V$. We write $U \jump V$ for $U$ essentially tolerates $V$. 

We note that essential tolerance is analogous to the converse of interpretability. In other words, `essentially tolerates' is analogous to
`interprets'. We will call the converse of essential tolerance: \emph{lax interpretability}.

Below we establish that essential tolerance is a \emph{bona fide} reduction relation---unlike tolerance that fails
to be transitive.

\begin{remark}
The notion of  \emph{tolerance} was introduced in \cite{tars:unde53} under the name of \emph{weak interpretability}.
We like `tolerates' more since it is more directly suggestive of the intended meaning. Japaridze uses tolerance in a more general sense.
See \cite{japa:tol92} and \cite{japa:weak93}, or the handbook paper  \cite{japa:logi98}.
\end{remark}

\begin{example}
We illustrate the intransitivity of tolerance. In fact, our counterexample shows a bit more.

Presburger Arithmetic essentially tolerates Predicate Logic in the language with a binary relation symbol.
Predicate Logic in the language of a binary relation symbol tolerates full Peano Arithmetic. However,
Presburger Arithmetic does not tolerate Peano Arithmetic.
\end{example}

\begin{remark}
The definition of $\jump$ suggests several variations, where we demand that some promised ingredients are effectively found from appropriate
indices. We 
will not explore such variations in the present paper.
\end{remark}

\begin{remark}
Robert Vaught, in his paper \cite{vaug:theo62} introduces a notion that we would like to call \emph{parametrically local interpretability} or $\lhd_{\sf pl}$.
This notion interacts in desirable ways with essential hereditary undecidability. We discuss the relationship between $\jumpb$ and $\lhd_{\sf pl}$ in
Appendix~\ref{paraloca}. We show that $\jumpb_{\sf c}$,  a slightly improved version of $\jumpb$, satisfies: if $U \rhd_{\sf pl}V$, then $U\jump_{\sf c}V$.
Moreover,  for our purposes, $\jumpb_{\sf c}$ retains all the good properties of $\jumpb$.
\end{remark}

The first two insights are that lax interpretability is (strictly) between two good notions of interpretability, to wit, model interpretability and
local interpretability.

\begin{theorem}
If $U \rhd_{\sf mod} V$, then $U \jump V$.
\end{theorem}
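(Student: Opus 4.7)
The plan is to fix an arbitrary consistent RE extension $U'\sle U$ and witness essential tolerance by adding to $U'$ the $\tau$-translated axioms of $V$ for a translation $\tau$ supplied by the model-interpretability hypothesis applied to a single model of $U'$.

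In detail: pick any consistent RE $U'\sle U$ and, by completeness, a model $\mc M \models U'$. Since $\mc M$ is in particular a $U$-model and $U \rhd_{\sf mod} V$, there is a translation $\tau$ from the $V$-language to the $U$-language such that $\widetilde\tau(\mc M)$ is an internal $V$-model of $\mc M$. If $\tau$ carries a parameter-domain $\alpha_\tau(\vv x)$, pick $\vv a$ in $\mc M$ with $\mc M\models \alpha_\tau(\vv a)$; in the parameter-free case this step is vacuous.

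Following footnote~\ref{voetsmurf}, define
\[
U'' \;:=\; U' \;+\; \exists \vv x\, \alpha_\tau(\vv x) \;+\; \{\,\forall \vv x\,(\alpha_\tau(\vv x) \to \phi^{\tau,\vv x}) \mid V\vdash \phi\,\} \;+\; \{\,\forall \vv x\,(\alpha_\tau(\vv x) \to \psi^{\tau,\vv x}) \mid \psi \in \idt V\,\}.
\]
Since $V$ is RE and $\tau$ is a single fixed syntactic object, $U''$ is RE. Consistency is witnessed by $(\mc M,\vv a)$: we have $\mc M \models U'$, $\mc M \models \alpha_\tau(\vv a)$, and, because $\widetilde\tau(\mc M)$ is a genuine $V$-structure, $\mc M \models \phi^{\tau,\vv a}$ whenever $V \vdash \phi$, and similarly the relativised identity axioms hold. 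By construction $\tau$ is an interpretation of $V$ in $U''$, so $U'' \rhd V$, and clearly $U'\sls U''$. Hence $U\jump V$.

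The only real obstacle is bookkeeping for parameters; a model-interpretation is typically parametric, so one must relativise the translated $V$-axioms to $\alpha_\tau$ and assert non-emptiness of the parameter-domain. No effective or uniform version of $\rhd_{\sf mod}$ is required: one model $\mc M$ yields one syntactic $\tau$, which is all the construction needs, so there is no issue producing an RE index for $U''$ from $U'$ together with this chosen $\tau$.
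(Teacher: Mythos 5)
Your proof is essentially identical to the paper's: pick any consistent RE $U'\sle U$, choose a model $\mathcal M\models U'$, invoke $U\rhd_{\sf mod}V$ on $\mathcal M$ to obtain a translation $\tau$ defining an internal $V$-model, and take $U'':=U'$ plus the $\tau$-translations of all $V$-theorems (with $\mathcal M$ witnessing consistency). The paper states this only in parameter-free form; your spelled-out parametric version, following footnote~\ref{voetsmurf}, is a welcome extra, though note that your consistency check $\mathcal M\models\forall\vv x\,(\alpha_\tau(\vv x)\to\phi^{\tau,\vv x})$ implicitly relies on the standard convention that the parameter-domain supplied by a model-interpretation is already cut down so that \emph{every} tuple in it yields a $V$-model, not merely the particular $\vv a$ you exhibited; under a looser reading of $\rhd_{\sf mod}$ one would need to shrink $\alpha_\tau$, which is not definable in general, and this is exactly the sort of friction that motivates the c-lax notion of Appendix~\ref{paraloca}.
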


\begin{proof}
Suppose $U \rhd_{\sf mod} V$. Let $U'$ be a consistent theory with $U' \sle  U$. Consider any model $\mathcal M$ of $U'$. There is an
$\mc M$-internal model of $V$, say, given by translation $\tau$. Let $U'' := U'+ \verz{\svb^\tau \mid V \vdash \svb}$.
Clearly, $U''$ is consistent and RE and $U'' \rhd V$ as witnessed by $\tau$.
\end{proof}

In Section~\ref{sepasec}, we develop the  theory
$\pams$. This theory is a sub-theory of {\sf R}.  We have  $\pams \jump {\sf R}$, but $\pams \nrhd_{\sf mod} {\sf R}$.
This tells us that the inclusion of model interpretability in lax interpretability is strict.

\begin{question}
Are there sequential $U$ and $V$ such that we have $U \jump V$, but $U \nrhd_{\sf mod}V$?
\end{question}

We turn to the comparison of lax and local interpretability.

\begin{theorem}
If $U \jump V$, then $U \rhd_{\sf loc} V$.
\end{theorem}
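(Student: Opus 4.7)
My plan is to prove the contrapositive syntactically: given a finitely axiomatised $V_0$ with $V_0 \sls V$ and $V_0 \nlhd U$, I will construct a consistent RE extension $U^\ast$ of $U$ incompatible with every interpretation of $V_0$, and then derive a contradiction from $U \jump V$.

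Let $A$ be the conjunction of the finitely many axioms of $V_0$ and enumerate all translations $\tau_0, \tau_1, \tau_2, \dots$ from the $V_0$-language into the $U$-language. Let $\sigma_n$ be the negation of the formal statement that $\tau_n$ witnesses $V_0 \lhd U$: in the parameter-free case $\sigma_n := \neg\, A^{\tau_n}$, and with parameters $\sigma_n := \neg\,(\exists \vv x\, \alpha_{\tau_n}(\vv x) \wedge \forall \vv x\,(\alpha_{\tau_n}(\vv x) \to A^{\tau_n,\vv x}))$. Set $U^\ast := U + \verz{\sigma_n \mid n < \omega}$. Because the $\tau_n$ are effectively enumerable and $\sigma_n$ computable from $n$, the theory $U^\ast$ is RE.

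The heart of the argument is the consistency of $U^\ast$. Suppose $U^\ast$ were inconsistent. By compactness, finitely many $\sigma_{n_1}, \dots, \sigma_{n_k}$ would already suffice, so $U \vdash \bigvee_{i \leq k} A^{\tau_{n_i}}$ (or its parametric analogue). Iterating the disjunctive translation construction $\tau \tupel{\phi} \nu$ recalled in Subsection~2.1, I form a single piecewise translation $\tau^\ast$ that on each disjunct $A^{\tau_{n_i}}$ behaves as $\tau_{n_i}$. From the assumed disjunction one then obtains $U \vdash A^{\tau^\ast}$, witnessing $V_0 \lhd U$ and contradicting the standing assumption. Hence $U^\ast$ is consistent.

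Applying $U \jump V$ to the consistent RE extension $U^\ast$ yields a consistent RE $U''$ with $U^\ast \sls U''$ and $U'' \rhd V$ via some translation $\tau_m$. Because $V_0 \sls V$, the same $\tau_m$ witnesses $U'' \rhd V_0$, so $U'' \vdash A^{\tau_m}$ (respectively its parametric analogue). But $U''$ extends $U^\ast$ and therefore proves $\sigma_m$, contradicting consistency of $U''$. Thus $V_0 \lhd U$, and since $V_0$ was an arbitrary finite sub-theory of $V$, we conclude $U \rhd_{\sf loc} V$. The only substantive obstacle is the disjunctive/piecewise step; in the parametric setting one additionally has to merge the parameter domains $\alpha_{\tau_{n_i}}$ into the parameter domain of $\tau^\ast$, but this is routine bookkeeping.
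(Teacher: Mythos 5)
Your proposal follows the paper's own route: form $U^\ast = U + \{\neg\, A^{\tau}\mid \tau\}$ over all translations, use compactness together with the disjunctive translation $\tau\tupel{\cdot}\nu$ to show $U^\ast$ is consistent whenever $V_0\nlhd U$, and then contradict $U\jump V$. The one detail worth tightening is that $A$ should be taken to include $\idt V$ (as the paper's $\sva$ explicitly does), since a translation may send identity to a mere congruence, in which case $U\vdash A^\tau$ by itself does not yet ensure that every $V_0$-theorem translates to a $U$-theorem.
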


\begin{proof}
Suppose $U \jump V$. Let $V_0$ be a finitely axiomatised sub-theory of $V$. Let $\sva$ be a single axiom of $V_0$ which
includes $\idt V$. Suppose $U \nrhd V_0$. Consider $U' := U + \verz{\neg\, \sva^\tau \mid \tau: \Sigma_V \to \Sigma_U}$.\footnote{In case we
are allowing parameters, we should replace $\phi^\tau$ in the argument by 
\[\exists \vv v\,\alpha_\tau(\vv v)\wedge \forall \vv u\, (\alpha_\tau(\vv u) \to \phi^{\tau,\vv u}).\]
Here $\alpha_\tau$ is the parameter domain. In the case of finitely axiomatised theories, we can even
omit the parameter domain, so  $\exists \vv u\; \phi^{\tau,\vv u}$ already works.}
The theory $U'$ is consistent since, if not, $U$ would prove a finite disjunction of sentences
of the form $\sva^\tau$. Say the translations involved are $\tau_0,\dots,\tau_{n-1}$. We define:
\[ \tau^\ast := \tau_0\tupel{\sva^{\tau_0}}(\tau_1\tupel{\sva^{\tau_1}}(\dots (\tau_{n-2}\tupel{\sva^{\tau_{n-1}}}\tau_{n-1})\dots)).\] 
We find that $U \vdash \sva^{\tau^\ast}$. \emph{Quod non}. So, $U'$ is consistent. Clearly $U'$ is RE
and no consistent RE extension of $U'$ can interpret the theory axiomatised by $\sva$. But this contradicts $U \jump V$.
\end{proof}

\begin{example}
Consider a consistent finitely axiomatised sequential theory $A$. 
We do have $A \rhd_{\sf loc} \mho(A)$. Here $\mho(A)$ is the theory ${\sf S}^1_2+\verz{{\sf Con}_n(A) \mid n\in \omega}$,
where ${\sf Con}_n$ means consistency w.r.t. proofs where all formulas in the proof have depth of quantifier alternations
complexity $\leq n$. See, e.g., \cite{viss:seco11} for more on $\mho$.

In, e.g., \cite{viss:fefe14} it is verified in detail that $A$ has a consistent RE extension $\widetilde A$
such that every interpretation of ${\sf S}^1_2$ in $A$ contains a restricted inconsistency statement for $A$.
We call such an extension \emph{a Kraj\'{\i}\v{c}ek-theory based on $A$}.
Clearly, no consistent extension of $\widetilde A$ in the same language can interpret $\mho(A)$.
So $A \not\jump \mho(A)$. This gives us our desired separating example between $\jump$ and $\rhd_{\sf loc}$.

We note that $A \not\jump \mho (A)$ in fact expresses the existence of a Kraj\'{\i}\v{c}ek extension.

Another example is as follows. Consider any complete and decidable theory $U$. We do have $U \rhd_{\sf loc} {\sf R}$. However, $U \not \jump {\sf R}$. Since
no complete RE theory does interpret {\sf R}.
\end{example}

It turns out that it is useful to lift $\jump$ to a relation between sets of theories. 
We define:
\begin{itemize}
\item
$\mathcal X \jump \mathcal Y$ iff for all $U\in \mathcal X$ and for all consistent  RE theories $U'\sle  U$,
there is a consistent RE theory $U''\sle  U$ and a $V\in \mathcal Y$, such that $U'' \rhd V$.
\end{itemize}

We note that $U \jump V$ is equivalent to $\verz U \jump \verz V$. We will write $U \jump \mathcal Y$ for
$\verz U \jump \mathcal Y$, etcetera.

\begin{theorem}
\begin{enumerate}[a.]
\item
Suppose $\mathcal X \subseteq \mathcal X'$ and $\mathcal X'\jump\mathcal Y'$ and  $\mathcal Y' \subseteq \mathcal Y$.
Then, $\mathcal X \jump \mathcal Y$.
\item
We have: $\mathcal X\jump\mathcal Y$ and $\mathcal X'\jump\mathcal Y$ iff $(\mathcal X \cup \mathcal X')\jump\mathcal Y$.
\item
The relation $\jump$ between sets of theories is transitive. As a consequence, $\jump$ as a relation between theories is transitive.
\end{enumerate}
\end{theorem}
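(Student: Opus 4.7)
The plan is to dispose of (a) and (b) as straightforward unfoldings of the set-level definition of $\jump$ and to concentrate effort on (c), where the real content lies in chaining two essential-tolerance relations across an intermediate interpretation. For (a): if $U\in\mathcal X\subseteq\mathcal X'$ and $U'\sle U$ is consistent and RE, then $\mathcal X'\jump\mathcal Y'$ hands us a consistent RE $U''\sle U'$ and a $V\in\mathcal Y'\subseteq\mathcal Y$ with $U''\rhd V$, which is exactly what $\mathcal X\jump\mathcal Y$ demands. For (b), the forward direction does a case split on whether a member of $\mathcal X\cup\mathcal X'$ sits in $\mathcal X$ or in $\mathcal X'$ and then invokes the appropriate hypothesis; the backward direction is immediate from (a) applied to the two inclusions $\mathcal X,\mathcal X'\subseteq\mathcal X\cup\mathcal X'$.

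For (c), I would assume $\mathcal X\jump\mathcal Y$ and $\mathcal Y\jump\mathcal Z$, pick $U\in\mathcal X$, and let $U'\sle U$ be consistent and RE. First invoke $\mathcal X\jump\mathcal Y$ to obtain a consistent RE $U''\sle U'$, some $V\in\mathcal Y$, and an interpretation $K\colon V\lhd U''$ with underlying translation $\tau$. The bridging device is to form the pullback of $U''$ along $K$: let $V^\dagger$ be the $V$-language theory axiomatised by $\{\phi : U''+\idtb VK\vdash\phi^K\}$. Then $V^\dagger$ is RE (because $U''$ is), extends $V$ (because $K$ is an interpretation), and is consistent (a derivation of $\bot$ from $V^\dagger$ would compose through $K$ into a derivation of $\bot$ in $U''$, since the translation fixes $\bot$ and $U''$ already proves the axioms of $\idtb VK$). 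Apply $\mathcal Y\jump\mathcal Z$ to $V$ with the chosen extension $V^\dagger$ to obtain a consistent RE $V^{\dagger\dagger}\sle V^\dagger$, some $W\in\mathcal Z$, and $L\colon W\lhd V^{\dagger\dagger}$.

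The main obstacle is to push $V^{\dagger\dagger}$ forward through $K$ into a consistent extension of $U''$. Set $U''':=U''+\{\phi^K:\phi\in V^{\dagger\dagger}\}$, with the obvious adjustment via the parameter domain $\alpha_\tau$ when $\tau$ carries parameters, in the style of footnote~\ref{voetsmurf}. The crucial verification is consistency: if $U'''\vdash\bot$, then for some $\phi_0,\ldots,\phi_{n-1}\in V^{\dagger\dagger}$ we have $U''+\idtb VK\vdash\neg(\phi_0^K\wedge\cdots\wedge\phi_{n-1}^K)$, and since the translation commutes with the propositional connectives this rewrites as $U''+\idtb VK\vdash(\neg(\phi_0\wedge\cdots\wedge\phi_{n-1}))^K$, placing $\neg(\phi_0\wedge\cdots\wedge\phi_{n-1})$ in $V^\dagger$ and hence in its extension $V^{\dagger\dagger}$; but $V^{\dagger\dagger}$ also proves each $\phi_i$, contradicting its consistency. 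With $U'''$ consistent and RE, $K$ upgrades to an interpretation $V^{\dagger\dagger}\lhd U'''$, and the composite translation $\tau\circ\tau_L$ then witnesses $W\lhd U'''$. Since $U'''\sle U''\sle U'$ and $W\in\mathcal Z$, this establishes $\mathcal X\jump\mathcal Z$. Theory-level transitivity of $\jump$ follows at once by specialising the set-level statement to the singletons $\{U\}$, $\{V\}$, $\{W\}$.
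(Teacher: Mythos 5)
Your proof is correct and follows the same route as the paper: parts (a) and (b) are the routine unfoldings, and for (c) you form exactly the same pullback theory ($V^\dagger = \{\phi : U'' \vdash \phi^K\}$, up to the harmless inclusion of $\idtb VK$ which $U''$ already proves) and the same pushforward $U''' = U'' + \{\phi^K : V^{\dagger\dagger}\vdash\phi\}$, with the same consistency argument. The only cosmetic difference is that you invoke compactness explicitly to reduce the consistency check to a finite conjunction, where the paper reads off a single offending $\psi$ directly.
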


\begin{proof}
We just treat (c).
Suppose $\mathcal X \jump \mathcal Y\jump \mathcal Z$.
Consider $U \in \mathcal X$ and let $U'$ be any consistent RE extension of $U$. Let $U''$ be a consistent RE extension
of $U'$  such that $U'' \rhd V$, for some $V \in \mathcal Y$. Say, we have $K:U'' \rhd V$. Let $V' := \verz{\svb\mid U'' \vdash \svb^K}$.
We find that $V'$ is a consistent RE extension of $V$. Let $V''$ be a consistent extension of
$V'$  such that $V'' \rhd W$, for some $W\in \mathcal Z$.

We consider $U^\ast := U'' + \verz{\svb^K \mid V'' \vdash \svb}$. Clearly $U^\ast$ is RE, $U^\ast \sle  U'$ and $U^\ast \rhd V'' \rhd W$, so $U^\ast \rhd W$.
We claim that $U^\ast$ is consistent. If not, there would be a $\svb$ such that 
$V'' \vdash \svb$ and $U'' \vdash (\neg\, \svb)^K$. It follows, by the definition of $V'$,
 that $V' \vdash \neg\, \svb$ and, hence, that $V'' \vdash \neg \, \svb$, contradicting the fact that
$V''$ is consistent. Thus, $U^\ast\sle  U'$ and $W$ are our desired witnesses.
\end{proof}

We write $U \tol \mathcal Y$ for $U$ tolerates some element of $\mathcal Y$.
Inspection of the above proof also tells us that:

\begin{theorem}
Suppose $U \tol V$ and $V \jump \mathcal Z$. Then, $U \tol \mathcal Z$.
\end{theorem}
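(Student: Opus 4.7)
The plan is to mimic the proof of the transitivity of $\jump$, but starting one step later since we only need a single extension of $U$ rather than quantifying over all of them.

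First, unpack $U \tol V$: by definition, there is a consistent RE theory $U' \sle U$ together with an interpretation $K : U' \rhd V$. Next, I form the ``pullback'' theory
\[ V' := \verz{\svb \mid U' \vdash \svb^K}, \]
in the language of $V$. The interpretation property of $K$ guarantees $V \sls V'$, and $V'$ is consistent (since $U'$ is) and RE (since $U'$ is).

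Now I apply the hypothesis $V \jump \mathcal Z$ to the extension $V'$ of $V$: there exist a consistent RE theory $V'' \sle V'$ and some $W \in \mathcal Z$ with $V'' \rhd W$. Following the template of the earlier transitivity argument, set
\[ U^\ast := U' + \verz{\svb^K \mid V'' \vdash \svb}. \]
Then $U^\ast$ is RE, $U^\ast \sle U' \sle U$, and $U^\ast \rhd V''$ via $K$, so by composition $U^\ast \rhd W$.

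The one step that requires a small argument is the consistency of $U^\ast$, and this is the same obstacle as in the transitivity proof. If $U^\ast$ were inconsistent, compactness would yield a sentence $\svb$ with $V'' \vdash \svb$ while $U' \vdash (\neg\, \svb)^K$; but the latter says $\neg\, \svb \in V'$, so $V'' \sle V'$ would prove $\neg \, \svb$, contradicting the consistency of $V''$. Hence $U^\ast$ witnesses $U \tol W$, and therefore $U \tol \mathcal Z$. No new ideas beyond those in the transitivity proof are needed; this theorem is essentially its ``truncated'' version.
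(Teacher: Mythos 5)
Your proof is correct and is exactly what the paper intends: the paper states the result with the remark ``Inspection of the above proof also tells us that\dots,'' and your argument is precisely the truncation of the transitivity proof that this remark points to, with the same pullback $V'$, the same use of $V \jump \mathcal Z$, the same definition of $U^\ast$, and the same compactness-based consistency check.
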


\begin{theorem}
\begin{enumerate}[i.]
\item
$(U \ovee V) \jump W$ iff $U \jump W$ and $V \jump W$.
\item
$(U\ovee V)\gmut \verz{U,V}$.
\end{enumerate}
\end{theorem}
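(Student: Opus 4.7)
The strategy rests on the canonical interpretations $J_U : (U\ovee V) \lhd U$ and $J_V : (U\ovee V) \lhd V$ that witness $U\ovee V$ as the interpretability infimum: $J_U$ sends $P \mapsto \top$, interprets all $V$-symbols trivially (relations as $\top$, functions as projections), and is the identity on $U$-symbols; $J_V$ is symmetric. The second standard device is the case split: for any consistent RE $T \sle (U\ovee V)$, at least one of $T+P$ and $T+\neg P$ is consistent.

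For the forward direction of (i), I would show $(U\ovee V) \jump W$ implies $U \jump W$ (symmetrically $V\jump W$). Given consistent RE $U'\sle U$, the plan is to \emph{pull back} along $J_U$ by forming $T := \verz{\alpha \mid U' \vdash J_U(\alpha)}$ in the $U\ovee V$-language. A short check using that $J_U$ is an interpretation gives $T \sle (U\ovee V)$, $T$ consistent and RE, together with the key equivalence $T \vdash \gamma \Iff U' \vdash J_U(\gamma)$. Applying $(U\ovee V) \jump W$ yields consistent RE $T' \sle T$ with $W \lhd T'$ via some translation $\tau$. The natural candidate is then $U'' := U' + \verz{J_U(\alpha) \mid T' \vdash \alpha}$, which is RE, extends $U'$, and interprets $W$ via $J_U \circ \tau$. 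The hard part is verifying consistency of $U''$: an inconsistency of $U''$ translates, via the key equivalence, into $T \vdash \neg\bigwedge_i \alpha_i$ for finitely many $T'$-theorems $\alpha_i$, hence $T' \vdash \bot$, contradicting consistency of $T'$.

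For the backward direction of (i), given $U\jump W$, $V\jump W$, and consistent RE $T \sle (U\ovee V)$, I would case-split on which of $T+P$, $T+\neg P$ is consistent (handling the former, the latter being symmetric via $V\jump W$). The consequences of $T+P$ in the $U$-language form a consistent RE extension $U'$ of $U$ (using $T \vdash P \to \phi$ for each $U$-axiom $\phi$). Feeding $U'$ into $U \jump W$ gives consistent RE $U''\sle U'$ with $U''\rhd W$ via some $\tau$, and I would set $T' := T+P+U''$ (viewing $U''$-axioms inside the $U\ovee V$-language). Consistency of $T'$ is the only thing to check and it follows because any inconsistency would force $U' \vdash \neg\bigwedge S_0$ for some finite $S_0$ of added $U''$-axioms, contradicting consistency of $U''$.

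Part (ii) drops out of the same setup. For $(U\ovee V) \jump \verz{U,V}$, I would take $T' := T+P$ when consistent (then $T' \rhd U$ via the identity translation since $T' \vdash$ all $U$-axioms), and $T':=T+\neg P$ otherwise (then $T'\rhd V$ via the identity translation). For $\verz{U,V} \jump (U\ovee V)$, any consistent RE extension $X'$ of $U$ already satisfies $X' \rhd U\ovee V$ via $J_U$, and symmetrically for extensions of $V$ via $J_V$, so the trivial witness $X'' := X'$ works.
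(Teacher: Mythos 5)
Your proof is correct and follows essentially the same route as the paper: the right-to-left direction of (i) and part (ii) use the identical $P$ versus $\neg P$ case split and the construction $T' := T + P + U''$, as in the paper. For the left-to-right direction of (i) the paper merely observes $U \rhd (U\ovee V)$, hence $U \jump (U\ovee V)$, and invokes the already-proved transitivity of $\jump$; you unfold that transitivity inline as a pullback along $J_U$ followed by a push-forward, which is the same argument spelled out in detail.
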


\begin{proof}
We just do (i). Claim (ii) is similar.
From left-to-right is immediate, since $U \rhd (U \ovee V)$ and $V \rhd (U \ovee V)$, and, hence,
$U \jump (U \ovee V)$ and $V \jump (U \ovee V)$. So, we are done by transitivity.

Let  $Z:= U \ovee V$. Suppose  $U\jump W$ and $V \jump W$. Let $Z'\sle  Z$ be RE and consistent. The theory
$Z'$ is either consistent with $P$ or with $\neg P$. Suppose it is consistent with $P$.
Let $U'$ be the set of $U$-sentences that follow from $Z'+P$. Clearly, $U'\sle  U$ and $U'$ is RE and consistent.
So, there is a $U''\sle  U'$ that is RE and consistent such that $U'' \rhd W$. We take $Z''$ the theory axiomatised by $Z'+P+U''$ in the $Z$-language.
Clearly, $Z''\sle  Z'$ and $Z''$ is consistent and RE and $Z'' \rhd W$. The argument in case $\neg\, P$ is consistent is similar. 
\end{proof}

We note that the above theorem tells us that the embedding functor of interpretability into lax interpretability preserves infima.

\begin{remark}
We define $\boxvee$ as follows. $U \boxvee V$ is the result of taking the disjoint union of the sigmatures of $U$ and $V$ and taking as axioms
$\phi\vee \psi$, whenever $U \vdash \phi$ and $V \vdash \psi$. It is easy to see that $\boxvee$ gives representatives of the infimum for
$\lhd_{\sf mod}$, $\jumpb$, and $\lhd_{\sf loc}$. 
\end{remark}

\begin{question}
It would be good to have a counterexample that shows that $\boxvee$ is not generally an infimum for $\lhd$. 
\end{question}

\begin{question}
The new notion of \emph{lax interpretability} raises many questions. E.g.:
is there a good supremum for lax interpretability? And: does the  embedding functor of interpretability into lax interpretability have a
right or left adjoint?
\end{question}

\subsection{Essential Hereditary Undecidability meets Lax Interpretability} 

We start with the main insight concerning the relation between  Essential Hereditary Undecidability and Lax Interpretability.

\begin{theorem}\label{reversosmurf}
Let $U$ be RE and consistent.
\begin{enumerate}[i.]
\item
Suppose $\mathcal V$ is a class of essentially undecidable theories and $U\jump \mathcal V$.
 Then, $U$ is essentially undecidable.
\item
Suppose $\mathcal V$ is a class of \ehu\ theories and $U\jump \mathcal V$.
 Then, $U$ is essentially hereditarily undecidable.
 \end{enumerate}
\end{theorem}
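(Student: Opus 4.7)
The plan is to carry out the argument for (ii) in detail and observe that (i) follows by the same template. The guiding idea extends the pull-back construction used in Theorem~\ref{upsmurf}: a hypothetical decidable witness on the $U$-side is transported through an interpretation $K$ into the language of $V$, contradicting the hypothesis on $V$.

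For (ii), let $W$ be a theory in the $U$-language with $U + W$ consistent; I show $W$ is undecidable. Suppose, for contradiction, that $W$ is decidable. Since $U + W$ is a consistent RE extension of $U$ and $U \jump \mathcal V$, there exist a consistent RE theory $U'' \sle U + W$, some $V \in \mathcal V$, and an interpretation $K : U'' \rhd V$. Define the $V$-language theory $V' := \{\svb \mid W + \idtb V K \vdash \svb^K\}$. I claim (a) that $V + V'$ is consistent and (b) that $V'$ is decidable. For (a), if not, compactness gives $\svb_1, \dots, \svb_n \in V'$ with $V \vdash \neg(\svb_1 \wedge \cdots \wedge \svb_n)$; now $U''$ proves every axiom of $W$ (since $U'' \sle U + W$) and proves $\idtb V K$ (since $K$ is an interpretation), so $W + \idtb V K \sls U''$, and hence $U'' \vdash \svb_i^K$ for each $i$, while $K: U'' \rhd V$ also gives $U'' \vdash \neg(\svb_1 \wedge \cdots \wedge \svb_n)^K$, contradicting consistency of $U''$. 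For (b), the identity theory $\idtb V K$ is finitely axiomatised, so $W + \idtb V K$ is decidable by Theorem~\ref{finexd}; since $\svb \mapsto \svb^K$ is recursive, the equivalence $V' \vdash \svb \iff W + \idtb V K \vdash \svb^K$ shows $V'$ is decidable. Thus $V'$ is a consistent decidable theory with $V + V'$ consistent, contradicting the essential hereditary undecidability of $V$.

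For (i), the same argument applies with $W$ replaced by a hypothetical consistent decidable RE extension $U'$ of $U$: one sets $V' := \{\svb \mid U' + \idtb V K \vdash \svb^K\}$, reuses Theorem~\ref{finexd} to keep $U' + \idtb V K$ decidable, and runs the same compactness argument to show $V + V'$ consistent; this yields a decidable consistent extension of $V$, contradicting essential undecidability. The main subtlety — and the only place where carelessness could break the argument — is the identity-axiom padding: because $K$ is only guaranteed to be an interpretation \emph{of} $U''$, we need both $W$ (or $U'$) and $\idtb V K$ on the hypothesis side of the definition of $V'$ for the compactness step to go through, and Theorem~\ref{finexd} is precisely what lets us add these finitely many identity axioms without losing decidability. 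Parameters in $K$, if present, require only the routine adaptations indicated in Footnote~\ref{voetsmurf}.
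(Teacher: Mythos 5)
Your argument for (ii) is correct and is, in essence, the paper's own proof: define $Z := \verz{\svb \mid W + \idtb VK \vdash \svb^K}$, note $Z$ is decidable by Theorem~\ref{finexd}, note $Z$ is consistent with $V$ because both $Z$ and $V$ embed (via $K$) into the consistent theory $\verz{\svb \mid U'' \vdash \svb^K}$, and invoke the essential hereditary undecidability of $V$, which says precisely that every theory \emph{consistent with} $V$ is undecidable. Your compactness phrasing of the consistency check is a slightly more explicit rendering of the same point.

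Your template for (i), however, has a genuine gap. Running the identical construction with a hypothetical decidable consistent extension $U'$ of $U$ gives $V' := \verz{\svb \mid U' + \idtb VK \vdash \svb^K}$, which is decidable and consistent with $V$ --- but it need not be an \emph{extension} of $V$. The inclusion $V \sls V'$ would require $U' + \idtb VK \vdash \svb^K$ for every $V$-theorem $\svb$, yet what you actually know is only $U'' \vdash \svb^K$, and $U''$ may be a strictly stronger theory than $U'$. A decidable theory merely consistent with $V$ does not contradict essential undecidability (that's the very point of the distinction between essential undecidability and essential hereditary undecidability), since $V + V'$ has no reason to be decidable. In other words, you wrote ``this yields a decidable consistent \emph{extension} of $V$'', but the construction only delivers a decidable theory \emph{consistent with} $V$.

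The paper closes this gap by a different route in (i): given a consistent decidable extension $W$ of $U$, pass to a decidable consistent \emph{complete} extension $W^\ast$ via Theorem~\ref{poco}. Apply $U \jump \mc V$ to the consistent RE extension $W^\ast$; completeness forces the promised consistent RE extension of $W^\ast$ to be $W^\ast$ itself, so one gets an outright interpretation $W^\ast \rhd V$. Pulling $V$ back through that interpretation now does yield a decidable consistent \emph{extension} of $V$, which is the contradiction that essential undecidability actually needs. So (ii) transfers word for word, but (i) requires this extra completion step; they are not instances of a single uniform template in the way your write-up suggests.
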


\begin{proof}
Ad (i).
Suppose $\mathcal V$ is a class of essentially undecidable theories and $U\jump \mathcal V$.
Suppose $U$ has a consistent decidable extension $W$. then, $W$ has a decidable consistent
complete extension $W^\ast$. It follows that $W^\ast \rhd V$, for some $V$ in $\mc V$. \emph{Quod impossibile}.

\medent
Ad (ii).
Suppose $\mathcal V$ is a class of \ehu\ theories and $U\jump \mathcal V$.
Suppose $W$ is an RE theory in the language of $U$ and suppose $U' := U \cup W$ is consistent.
We have to show that $W$ is undecidable.

Let $U''$ be a consistent RE extension of $U'$ such that $K:U'' \rhd V$, for some $V\in \mathcal V$.
Consider the theory $Z:=  \verz{\svb \mid W+\idtb VK \vdash \svb^K}$.
We have $Z \vdash \svb$ iff  $W+\idtb VK \vdash \svb^K$. 
Clearly $Z$ is a sub-theory of  $V$. 
If $W$ were decidable then so would $Z$, contradicting the fact that $V$ is 
essentially hereditarily undecidable.\footnote{We need small adaptations of the argument
in case we allow parameters. See also Footnote~\ref{voetsmurf}.}
\end{proof}

\begin{remark}
Inspection of the example provided by Ehrenfeucht in \cite{ehre:sepa61}, shows that his construction 
provides an example where $U \jump \mc V$, each element of $\mc V$ is recursively inseparable (if we wish, even effectively inseparable),
but $U$ is not recursively inseparable. The theory $U$ of the example is essentially undecidable.
\end{remark}

We now turn to the result that motivates looking at classes of theories a relata of  $\rhd$.
Let $\spth S$ be the set of all theories $\sigma^{\mf q}$, where $\sigv$ is a false pure $\Sigma^0_1$-sentence and $\sigma^{\mf q}$ is consistent.\footnote{We can also
allow the inconsistent theory in $\spth S$.}
\begin{theorem}\label{sigmasmurf}
We have ${\sf R} \gmut \spth S$.
\end{theorem}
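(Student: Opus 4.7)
The plan is to verify both halves of the mutual lax interpretability separately, using the three-part theorem about $\sigma^{\mf q}$ as the main engine.

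The direction $\spth S \jump {\sf R}$ is essentially free from Theorem~\ref{knutselsmurf}(iii). Given any $\sigma^{\mf q} \in \spth S$, the sentence $\sigma$ is by definition a false pure $\Sigma^0_1$-sentence, so $\sigma^{\mf q} \vdash {\sf R}$. Hence every consistent RE extension $T$ of $\sigma^{\mf q}$ already proves ${\sf R}$, and the identity translation witnesses $T \rhd {\sf R}$. We simply take $T' := T$.

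The substantive direction is ${\sf R} \jump \spth S$. Let $R'$ be an arbitrary consistent RE extension of ${\sf R}$. I claim it suffices to exhibit a false pure $\Sigma^0_1$-sentence $\sigma$ with $R' + \sigma^{\mf q}$ consistent. Granting this, set $R'' := R' + \sigma^{\mf q}$. Then $R''$ is a consistent RE extension of $R'$ with $R'' \vdash \sigma^{\mf q}$, so the identity translation yields $R'' \rhd \sigma^{\mf q}$; and since $\sigma^{\mf q}$ is a sub-theory of $R''$, it too is consistent, placing it in $\spth S$.

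The core of the proof is the existence claim, which I will handle by a diagonal/non-REness argument. Suppose for contradiction that for every false pure $\Sigma^0_1$-sentence $\sigma$, one has $R' \vdash \neg\, \sigma^{\mf q}$. On the other hand, if $\sigma$ is a true pure $\Sigma^0_1$-sentence, then by Theorem~\ref{knutselsmurf}(ii) we have ${\sf R} \vdash \sigma^{\mf q}$, hence $R' \vdash \sigma^{\mf q}$, and consistency of $R'$ forces $R' \nvdash \neg\, \sigma^{\mf q}$. Combining the two, for a pure $\Sigma^0_1$-sentence $\sigma$:
\[ \sigma \text{ is false} \;\Longleftrightarrow\; R' \vdash \neg\, \sigma^{\mf q}. \]
The right-hand side is $\Sigma^0_1$ since $R'$ is RE, whereas the set of false pure $\Sigma^0_1$-sentences is $\Pi^0_1$-complete (true pure $\Sigma^0_1$-sentences can encode any RE set by the standard arithmetisation), and in particular not RE. This contradiction furnishes the required false $\sigma$.

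The main obstacle is the diagonal step, and more precisely pinning down the asymmetry between the two cases of Theorem~\ref{knutselsmurf}: Theorem (iii) is strong enough (on the face of $\spth S$) to drive the trivial half, while Theorem (ii) is exactly what converts the $\Pi^0_1$-hardness of $\Sigma^0_1$-falsity into a consistency statement that cannot hold universally. Everything else—the choice of $R'' := R' + \sigma^{\mf q}$, and the verification that the definitions of $\jump$ for classes and of $\spth S$ are met—is bookkeeping.
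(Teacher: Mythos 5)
Your proof is correct and follows essentially the same route as the paper: the right-to-left half uses $\sigma^{\mf q}\vdash{\sf R}$ for false~$\sigma$, and the left-to-right half argues that if every false $\sigma^{\mf q}$ were refuted by $R'$, then (combined with $R'\vdash\sigma^{\mf q}$ for true $\sigma$) $\Sigma^0_1$-truth would become decidable, which is impossible. The only cosmetic difference is that the paper phrases the contradiction as ``we could decide $\Sigma^0_1$-truth'' while you phrase it as ``the set of false pure $\Sigma^0_1$-sentences would be RE''; these are the same observation.
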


\begin{proof}
From left-to-right. Let $U'$ be a consistent RE extension of {\sf R}. Clearly, $U' \vdash \sigma^{\mf q}$, for all true pure $\Sigma^0_1$-sentences
$\sigv$. So, if no $\sigma^{\mf q}\in \spth S$, would be consistent with $U'$, we could decide $\Sigma^0_1$-truth. \emph{Quod non}.
Consider any such $\sigma^{\mf q}$ that is consistent with $U'$. Let $U'' := U'+\sigma^{\mf q}$. Clearly, $U'' \rhd \sigma^{\mf q}$.

From right-to-left. Consider $\sigma^{\mf q}\in \spth S$. Clearly, $\sigma^{\mf q} \rhd {\sf R}$ and we are easily done.
\end{proof}

\noindent
Of course, the extension $U''$ in the proof of Theorem~\ref{sigmasmurf} can be found effectively from an index of $U'$.
We outline one way to do it.

\begin{proof}[Sketch of an alternative proof of Theorem~\ref{sigmasmurf}]
Let $\opr$ be $U'$-provability. By the G\"odel Fixed Point Lemma, we can find a pure $\Sigma_1$-sentence $\jmath$ that is equivalent to $\opr\neg\,\jmath^{\sf q}$.\footnote{We
need a bit of careful attention to ensure that our sentence is pure.}
Suppose $\jmath$ were true, then we have both $\opr\jmath^{\sf q}$ and $\opr\neg\,\jmath^{\sf q}$, contradicting the consistency of 
$U'$. So $\jmath$ is false and $\jmath^{\sf q}$ is consistent with $U'$. We take $U'' := U'+\jmath^{\sf q}$.
\end{proof}

Since all $\sigma^{\mf q}\in \spth S$ extend {\sf R}, they are essentially undecidable. Moreover, since the $\sigma^{\mf q}$ are finitely axiomatised, they
are \ehu. It follows from Theorem~\ref{reversosmurf}, that {\sf R} is \ehu. So, this gives us a proof of Theorem~\ref{cobhamsmurf}. 

\begin{question}\label{nozelsmurf}
Suppose $U \jump \spth S$. Does it follow that $U$ is recursively inseparable?
\end{question}

 Let $\spth F$ be the set of all finitely axiomatised \ehu\ theories. Example~\ref{nofinex} shows that
 there is an \ehu\ theory $U$ such that $U \not\jump \spth F$.

\section{$\Sigma^0_1$-friendliness and $\Sigma^0_1$-representativity}
In this section, we have a brief look at a rather natural property of theories that implies essential hereditary undecidability.

Consider a consistent RE theory $U$ and a
 recursive function $\Phi$ from pure 1-$\Sigma^0_1$-sentences to $U$-sentences. We give three possible properties of
 $\Phi$. Let $\sigv$ range over pure 1-$\Sigma^0_1$-sentences.
 \begin{enumerate}[$\Sigma$1.]
 \item
  If $\sigv$ is true, then $U \vdash \Phi(\sigv)$.
  \item
   $(U+\Phi(\sigv))\rhd \sigma^{\mf q}$.
\item
Suppose $\sigma\leq \sigma'$. Then, $U \vdash \neg\, \Phi(\sigma'<\sigma)$. 
 Similarly, suppose $\sigma< \sigma'$. Then, $U \vdash \neg\, \Phi(\sigma'\leq \sigma)$.
 \end{enumerate}
 We say that $U$ is \emph{$\Sigma^0_1$-friendly} iff, there is a recursive $\Phi$ satisfying $\Sigma 1$ and
 $\Sigma 2$. We say that $U$ is  \emph{$\Sigma^0_1$-representative} if  there is a recursive $\Phi$ satisfying $\Sigma 1$ and
 $\Sigma 3$.
 
 The next theorem is, in a sense, a generalisation of the First Incompleteness Theorem. We just need $\Sigma 1$.
 
 \begin{theorem}\label{ge-een}
Consider a consistent RE theory $U$ and a
 recursive function $\Phi$ from pure  1-$\Sigma^0_1$-sentences to $U$-sentences. 
 Suppose $\Phi$ satisfies $\Sigma 1$. Let $U_i$ be a recursive sequence of
consistent RE extensions of $U$. Then, we can effectively find a false pure 1-$\Sigma^0_1$-sentence $\jmath$, such that 
$\Phi(\jmath)$ is consistent with each of the $U_i$.
 \end{theorem}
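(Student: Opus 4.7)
The plan is a direct Gödel-style fixed-point construction, made to yield a \emph{literally} pure 1-$\Sigma^0_1$-sentence. Since $\langle U_i\rangle_{i\in\omega}$ is a recursive sequence of RE theories and $\Phi$ is recursive, the relation
\[ R(x) :\iff \exists i\, \exists p\; \bigl( p \text{ codes a } U_i\text{-proof of } \neg\,\Phi(x)\bigr)\]
is $\Sigma^0_1$ in $x$, where $x$ ranges over codes of pure 1-$\Sigma^0_1$-sentences. By the normal-form result of \cite{viss:onq17} invoked in Section~2.2, $R(x)$ is represented in the standard model by a pure 1-$\Sigma^0_1$-formula $\rho(x)$.

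Next, I would apply the effective Gödel fixed-point lemma, using the convention that substitution of a numeral into a pure $\Sigma^0_1$-formula is followed by a rewrite to pure normal form, in order to produce a pure 1-$\Sigma^0_1$-sentence $\jmath$ that is externally equivalent to $\rho(\gn{\jmath})$. Unpacking, this means that $\jmath$ is true iff some $U_i$ proves $\neg\,\Phi(\jmath)$.

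The argument then closes itself. If $\jmath$ were true, some $U_i$ would prove $\neg\,\Phi(\jmath)$; but $\Sigma 1$ applied to the true sentence $\jmath$ gives $U \vdash \Phi(\jmath)$ and hence $U_i \vdash \Phi(\jmath)$, contradicting the consistency of $U_i$. So $\jmath$ is false, no $U_i$ refutes $\Phi(\jmath)$, and therefore $U_i + \Phi(\jmath)$ is consistent for each $i$. Uniformity of the construction gives effectiveness in indices for $\Phi$, $U$, and $\langle U_i\rangle$. The one delicate point is ensuring that the fixed-point sentence is literally a pure 1-$\Sigma^0_1$-sentence---not merely provably equivalent to one---since $\Phi$ is only defined on such sentences; this is handled by the normal-form convention already adopted in Section~2.2.
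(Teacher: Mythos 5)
Your proof is correct and follows essentially the same route as the paper's: form the $\Sigma^0_1$-predicate ``some $U_i$ refutes $\Phi(x)$'', bring it to pure 1-$\Sigma^0_1$ normal form, apply the G\"odel fixed point with the substitution-followed-by-normalisation convention, and conclude that the fixed point must be false. The only cosmetic difference is that the paper writes out the normalisation explicitly as $\exists u\, \exists i\bles u\, \exists p\bles u\;\, \pi(i,p,\phi)$ rather than invoking the normal-form lemma as a black box.
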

 
 \begin{proof}
 We stipulate the conditions of the theorem.
We can find a pure $\Delta_0$-formula $\pi(i,p,\phi)$ such that  $U_i\vdash \phi$ iff $\exists p\; \pi(i,p,\phi)$.
 Let \[\apr\phi := \exists u\, \exists i\bles u\, \exists p\bles u\;\, \pi(i,p,\phi).\]
 
 Using the G\"odel Fixed Point Construction, we find a pure 1-$\Sigma^0_1$-sentence $\jmath$ such that $\jmath$ is true 
 iff $\apr\neg\, \Phi(\jmath)$.
  Suppose $\jmath$ is true.
 Then, $U \vdash \Phi(\jmath)$ and, for some $i$, we have $U_i \vdash \neg\, \Phi(\jmath)$,
 contradicting the consistency of $U_i$. Thus, $\jmath$ is false and consistent with each of the $U_i$.
 \end{proof}
 
 \begin{remark}(Kripke).
 We immediately get Kripke's version of the First Incompleteness Theorem from Theorem~\ref{ge-een}.
 Let {\sf School} be the theory in the language of arithmetic (without $<$ as primitive) of all true closed equations. We get Kripke's result by
 setting $U := {\sf School}$ and $\Phi$ the transformation promised by Matiyasevich's theorem that sends a pure 1-$\Sigma^0_1$-sentence
 to a purely existential sentence.
 
 E.g., it follows that there is a Diophantine equation that has solutions in all finite rings and in some non-standard model of {\sf PA}, but no
 solutions in $\mathbb N$.
 \end{remark}
 
 \begin{lemma}
 Every $\Sigma^0_1$-friendly theory $U$ is $\Sigma^0_1$-representative.
 \end{lemma}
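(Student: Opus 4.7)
The plan is to show that the \emph{same} witness $\Phi$ that shows $\Sigma^0_1$-friendliness already shows $\Sigma^0_1$-representativity. In other words, $\Sigma 1$ is common to both notions, and we only need to deduce $\Sigma 3$ from $\Sigma 1+\Sigma 2$.

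The key observation is Theorem~\ref{knutselsmurf}(iii): whenever $\sigma \leq \sigma'$, the sentence $(\sigma' < \sigma)^{\mf q}$ is inconsistent, and whenever $\sigma < \sigma'$, the sentence $(\sigma' \leq \sigma)^{\mf q}$ is inconsistent. So the two cases of $\Sigma 3$ are really two instances of the same idea.

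Suppose $\Phi$ witnesses friendliness. For the first case of $\Sigma 3$, assume $\sigma \leq \sigma'$. By $\Sigma 2$ applied to the pure $1$-$\Sigma^0_1$-sentence $\sigma' < \sigma$, we have $(U+\Phi(\sigma'<\sigma)) \rhd (\sigma'<\sigma)^{\mf q}$. Since interpretability preserves consistency and the target $(\sigma'<\sigma)^{\mf q}$ is inconsistent by Theorem~\ref{knutselsmurf}(iii), the source $U+\Phi(\sigma'<\sigma)$ must also be inconsistent, which is to say $U \vdash \neg\,\Phi(\sigma'<\sigma)$. The second case, assuming $\sigma < \sigma'$, is entirely symmetric: by $\Sigma 2$, $(U+\Phi(\sigma'\leq \sigma)) \rhd (\sigma'\leq\sigma)^{\mf q}$, and the latter is inconsistent by Theorem~\ref{knutselsmurf}(iii), giving $U \vdash \neg\,\Phi(\sigma'\leq\sigma)$.

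There is essentially no obstacle here; the only thing to check is that witness comparisons of pure $1$-$\Sigma^0_1$-formulas are again pure $1$-$\Sigma^0_1$-formulas, so that $\Phi$ is defined on the relevant inputs, but this was already noted in the paragraph preceding Theorem~\ref{minabelesmurf}. Thus representativity is witnessed by the very same $\Phi$.
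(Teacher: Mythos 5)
Your proof is correct and follows exactly the paper's own argument: take the same $\Phi$, apply $\Sigma 2$ to the witness-comparison sentence $\sigma' < \sigma$ (resp.\ $\sigma' \leq \sigma$), invoke Theorem~\ref{knutselsmurf}(iii) to see the interpreted theory is inconsistent, and conclude $U \vdash \neg\,\Phi(\cdot)$. The only cosmetic difference is that you spell out both cases where the paper says ``the other case is similar.''
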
 
 
 \begin{proof}
 Let $\Phi$ witness that $U$ is $\Sigma^0_1$-friendly. We prove $\Sigma 3$. Let $\sigma$ and $\sigma'$ be pure 1-$\Sigma^0_1$-sentences.
We have:
 $(U+\Phi(\sigma'<\sigma)) \rhd [\sigma'< \sigma]$.  Suppose $\sigma\leq \sigma'$. Since, by Theorem~\ref{minabelesmurf},
 $[\sigma'< \sigma]$ is inconsistent, we find that $U\vdash \neg\,\Phi(\sigma'<\sigma)$.
The other case is similar. 
 \end{proof}
 
\begin{theorem}\label{vettesmurf}
Suppose $U$ is RE, consistent, and $\Sigma^0_1$-friendly. Then, $U\jump \spth S$, and, hence,  $U\jump {\sf R}$.
\end{theorem}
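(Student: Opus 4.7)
The plan is to unfold the definition of $\jumpb$ directly, use $\Sigma^0_1$-friendliness to produce, inside each consistent RE extension $U'$ of $U$, a false pure 1-$\Sigma^0_1$-sentence $\jmath$ whose $\Phi$-image is consistent with $U'$, and then use clause $\Sigma 2$ to read off an interpretation of $\jmath^{\mf q}\in \spth S$. Transitivity of $\jumpb$ together with $\sigma^{\mf q}\rhd {\sf R}$ will then give $U\jump{\sf R}$.

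Concretely, fix a recursive $\Phi$ witnessing $\Sigma 1$ and $\Sigma 2$, and let $U'\sle U$ be any consistent RE extension. Applying Theorem~\ref{ge-een} to the constant recursive sequence $U_i := U'$, we obtain a false pure 1-$\Sigma^0_1$-sentence $\jmath$ such that $U'+\Phi(\jmath)$ is consistent. (Alternatively one may reprove this directly: fix the Gödel fixed point $\jmath\iff \apr\neg\,\Phi(\jmath)$ where $\apr$ is $U'$-provability; if $\jmath$ were true, $\Sigma 1$ would give $U\vdash\Phi(\jmath)$ while $U'\vdash\neg\,\Phi(\jmath)$, contradicting consistency of $U'$.) Set $U'' := U'+\Phi(\jmath)$. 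Then $U''$ is RE, consistent, and $U''\sle U'$. By $\Sigma 2$, we have $U''\rhd \jmath^{\mf q}$, and from the consistency of $U''$ we conclude that $\jmath^{\mf q}$ is itself consistent. Since $\jmath$ is a false pure $\Sigma^0_1$-sentence with $\jmath^{\mf q}$ consistent, we have $\jmath^{\mf q}\in\spth S$. As $U'$ was an arbitrary consistent RE extension of $U$, this shows $U\jump\spth S$.

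For the second assertion, note that if $\sigma$ is false, then by the theorem on $(\cdot)^{\mf q}$ (item iii) we have $\sigma^{\mf q}\vdash {\sf R}$, so every element of $\spth S$ tautologically interprets ${\sf R}$; hence $\spth S\jump\verz{{\sf R}}$. By transitivity of $\jumpb$ between classes of theories, $U\jump\spth S\jump{\sf R}$ yields $U\jump{\sf R}$.

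There is no real obstacle here—the work was packaged into Theorem~\ref{ge-een} and into properties $\Sigma 1,\Sigma 2$. The only point requiring a bit of care is verifying that the witness $\jmath^{\mf q}$ we produce genuinely lies in $\spth S$, which reduces to checking consistency of $\jmath^{\mf q}$; this in turn is automatic from the existence of the interpretation $U''\rhd\jmath^{\mf q}$ in a consistent theory.
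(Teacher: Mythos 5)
Your proof is correct. It differs from the paper's main argument, though. The paper first notes that any consistent RE extension of a $\Sigma^0_1$-friendly theory is again $\Sigma^0_1$-friendly (the same $\Phi$ works), so it suffices to show that $U$ \emph{tolerates} some element of $\spth S$; it then establishes that by a short non-constructive reduction---if $U$ tolerated no false $\sigma^{\mf q}$, clauses $\Sigma 1$ and $\Sigma 2$ would let $U$ decide $\Sigma^0_1$-truth, contradiction---and only afterwards remarks that a Gödel fixed point gives an effective witness. You instead unwind the definition of $\jumpb$ directly, and for each $U'$ invoke Theorem~\ref{ge-een} (the fixed-point lemma) to produce $\jmath$ explicitly with $U'+\Phi(\jmath)$ consistent, then use $\Sigma 2$ and upward persistence of interpretability to get $U''\rhd\jmath^{\mf q}$. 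The two routes are morally the same argument run in opposite order: the paper's is shorter because the preservation observation lets it work with $U$ itself, while yours is immediately effective and does not need the preservation lemma since it applies $\Sigma 1,\Sigma 2$ for $U$ directly inside the arbitrary extension $U'$. Both are fine; yours essentially promotes the paper's effectivity remark to the main proof.
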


\begin{proof}
We note that any consistent RE extension of a $\Sigma^0_1$-friendly RE theory is again $\Sigma^0_1$-friendly. So it is sufficient to show that
$U$ tolerates a theory $\sigma^{\mf q}$, for false pure 1-$\Sigma^0_1$-sentences $\sigv$.

Suppose $U$ does not tolerate any false  $\sigma^{\mf q}$. If $\sigv$ is true, we have $U \vdash \Phi(\sigv)$.
Suppose $\sigv$ is false. We have $(U+\Phi(\sigv)) \rhd \sigma^{\mf q}$. So, if $U+\Phi(\sigv)$ were consistent,
then $U$ would tolerate $\sigma^{\mf q}$. \emph{Quod non, ex hypothesi}. So, $U \vdash \neg\,\Phi(\sigv)$.
Since $U$ is RE, we can now decide the halting problem. \emph{Quod impossibile}.
\end{proof}

We note that we can effectively find a sentence $\sigma$ such that $\Phi(\sigma)$ is consistent with $U$ from indices for
$U$ and $\Phi$. Let $\opr\neg\,\Phi(s)$ be a pure 1-$\Sigma^0_1$-formula representing the $U$-provability of $\neg \,\Phi(s)$.
Then, we can take $\sigma$ to be $\jmath$, (a pure 1-$\Sigma^0_1$ version of) the G\"odel fixed point  that is equivalent to
$\opr\neg\,\Phi(\jmath)$. It is easy to see that $U+\Phi(\jmath)$ is indeed consistent.
 
The following result employs the notions and notations of \cite{viss:frie22}.
 
 \begin{theorem}\label{luiesmurf}
 Every consistent RE effectively Friedman-reflexive theory $U$ is $\Sigma^0_1$-friendly.
 \end{theorem}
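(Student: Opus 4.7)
The plan is to let the effective Friedman-reflexivity apparatus associated with $U$ produce $\Phi$ directly. Specifically, as developed in \cite{viss:frie22}, effective Friedman-reflexivity of $U$ supplies a recursive operation $A \mapsto \mathfrak{F}_U(A)$ from (indices of) finitely axiomatised theories $A$ to $U$-sentences with two key properties: \emph{(a)} $U + \mathfrak{F}_U(A) \rhd A$, and \emph{(b)} $U \vdash \mathfrak{F}_U(A)$ whenever $A$ is interpretable in $U$. Since $\sigma \mapsto \sigma^{\mathfrak q}$ is a recursive map producing finite axiomatisations, I would set $\Phi(\sigma) := \mathfrak{F}_U(\sigma^{\mathfrak q})$. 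Composition makes $\Phi$ a recursive function from pure 1-$\Sigma^0_1$-sentences to $U$-sentences of the required type.

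Property $\Sigma 2$ then follows directly from \emph{(a)}: $U + \Phi(\sigma) \rhd \sigma^{\mathfrak q}$ by construction. For $\Sigma 1$, I would argue as follows. Suppose $\sigma$ is true. By Theorem~\ref{minabelesmurf}(iv), $\sigma^{\mathfrak q}$ is interpretable in $\top$ when piecewise interpretations are permitted, and otherwise in the one-axiom theory $\exists x \exists y\, x \neq y$. In either mode, $\sigma^{\mathfrak q}$ is interpretable in $U$, and property \emph{(b)} delivers $U \vdash \mathfrak{F}_U(\sigma^{\mathfrak q}) = \Phi(\sigma)$.

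The main obstacle I anticipate is purely bookkeeping: lining up the precise flavour of interpretation (parameterised or not, piecewise or not) admitted by clause \emph{(b)} of the characterisation from \cite{viss:frie22} with the interpretations supplied by Theorem~\ref{minabelesmurf}(iv). If the Friedman-reflexivity machinery in \cite{viss:frie22} admits piecewise interpretations in its hypothesis, the argument is immediate; otherwise, one routes through the parameter-free non-piecewise interpretation from $\exists x \exists y\, x \neq y$, which is interpretable in $U$ whenever $U$ has a model with at least two elements. The only case this misses is $U \vdash \forall x \forall y\, x=y$, in which $U$ is decidable and so cannot be effectively Friedman-reflexive in any nontrivial sense (indeed, Theorem~\ref{timmersmurf} would fail for such $U$). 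Once this alignment is verified, the proof is a straightforward compilation from the definitions; no new constructions beyond the choice $\Phi := \mathfrak{F}_U \circ (\cdot)^{\mathfrak q}$ are required.
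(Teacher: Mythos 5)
Your proposal reconstructs the paper's approach, which is a one-liner: the paper simply sets $\Phi(\sigma) := \ccz\sigma$, importing the operator $\ccz$ and its required properties wholesale from \cite{viss:frie22} and leaving the verification of $\Sigma 1$ and $\Sigma 2$ implicit. Your definition $\Phi := \mathfrak{F}_U \circ (\cdot)^{\mathfrak q}$, with $\Sigma 2$ read off from the Feferman-style property \emph{(a)} and $\Sigma 1$ obtained by combining Theorem~\ref{knutselsmurf}(iv) with the reflexivity property \emph{(b)}, is an explicit unpacking of exactly this; the only caveat is the one you already flag, namely that the stated interface \emph{(a)}/\emph{(b)} for $\mathfrak{F}_U$ must be matched line-by-line against the precise formulation of effective Friedman-reflexivity in \cite{viss:frie22}.
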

 
 \begin{proof}
 We can take $\Phi(\sigma) := \ccz\sigma$
 \end{proof}
 
 We note that  Theorems~\ref{vettesmurf} and \ref{luiesmurf} immediately give Theorem~\ref{timmersmurf}.

The theory {\sf R} is $\Sigma^0_1$-friendly via the mapping $\sigv \mapsto \bigwedge \sigma^{\mf q}$. So this again shows that ${\sf R} \jump \spth S$.

It turns out that $\Sigma^0_1$-representativity coincides with a familiar notion.

 \begin{theorem}\label{reptoei}
Consider a consistent RE theory $U$. Then, $U$ is $\Sigma^0_1$-representative iff $U$ is effectively inseparable. 
\end{theorem}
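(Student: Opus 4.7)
The plan is to prove both implications, the forward one by a Rosser-style self-reference against a disjoint separator, and the reverse by transferring a witness from ${\sf R}$ through the Pour-El--Kripke isomorphism.

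\medent
\textbf{($\To$) $\Sigma^0_1$-representative implies effectively inseparable.}
Let $\Phi$ witness $\Sigma^0_1$-representativity. Given RE indices $i,j$ with $\downp U \subseteq {\sf W}_i$, $\downr U \subseteq {\sf W}_j$, and ${\sf W}_i \cap {\sf W}_j = \emptyset$, I aim to effectively produce a sentence outside ${\sf W}_i \cup {\sf W}_j$. Introduce two pure 1-$\Sigma^0_1$ formulas $\alpha(x)$ and $\beta(x)$ with a single free variable $x$, expressing ``$\Phi(x)$ is enumerated in ${\sf W}_i$'' and ``$\Phi(x)$ is enumerated in ${\sf W}_j$'' respectively; both can be placed in pure 1-$\Sigma^0_1$ form by pairing the quantifiers. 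Since witness comparisons of pure 1-$\Sigma^0_1$ formulas are again pure 1-$\Sigma^0_1$, a standard G\"odel fixed-point furnishes a pure 1-$\Sigma^0_1$ sentence $\sigma$ which, after the substitution normalisation the paper insists upon, \emph{is} the witness comparison $\beta(\gn\sigma)\leq \alpha(\gn\sigma)$.

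\medent
I claim $\Phi(\sigma) \notin {\sf W}_i \cup {\sf W}_j$, by ruling out the other cases. If $\sigma$ were true, then $\beta(\gn\sigma)$ has a witness, so $\Phi(\sigma) \in {\sf W}_j$; but $\Sigma 1$ yields $U \vdash \Phi(\sigma)$, so $\Phi(\sigma) \in {\sf W}_i$, contradicting disjointness. Hence $\sigma$ is false. If in addition $\alpha(\gn\sigma)<\beta(\gn\sigma)$ were true, then $\Phi(\sigma) \in {\sf W}_i$, and applying $\Sigma 3$ with $\alpha(\gn\sigma)$ and $\beta(\gn\sigma)$ in the roles of $\sigma$ and $\sigma'$ gives $U \vdash \neg\, \Phi(\beta(\gn\sigma)\leq\alpha(\gn\sigma)) = \neg\, \Phi(\sigma)$, so $\Phi(\sigma) \in {\sf W}_j$ as well, again a contradiction. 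The only remaining possibility is that neither $\alpha(\gn\sigma)$ nor $\beta(\gn\sigma)$ has a witness, i.e.\ $\Phi(\sigma) \notin {\sf W}_i \cup {\sf W}_j$. Everything is effective in $i,j$, and the resulting partial function can be totalised as in the standard account.

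\medent
\textbf{($\leftarrow$) Effectively inseparable implies $\Sigma^0_1$-representative.}
The preceding remark notes that ${\sf R}$ is $\Sigma^0_1$-friendly via $\sigv \mapsto \bigwedge \sigma^{\mf q}$, and hence $\Sigma^0_1$-representative by the lemma above; by the forward direction just proved, ${\sf R}$ itself is effectively inseparable. Given any other effectively inseparable $U$, the Pour-El--Kripke theorem supplies a recursive boolean isomorphism $\Psi$ between $U$ and ${\sf R}$. Let $\Phi_{\sf R}$ witness $\Sigma^0_1$-representativity of ${\sf R}$ and set $\Phi_U := \Psi^{-1} \circ \Phi_{\sf R}$. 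Property $\Sigma 1$ is immediate from the biconditional $U \vdash \phi \Iff {\sf R} \vdash \Psi(\phi)$, while $\Sigma 3$ follows because $\Psi^{-1}$ commutes with negation, so $\neg\, \Phi_U(\sigma' < \sigma) = \Psi^{-1}(\neg\, \Phi_{\sf R}(\sigma' < \sigma))$ is provable in $U$ exactly when $\neg\, \Phi_{\sf R}(\sigma' < \sigma)$ is provable in ${\sf R}$.

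\medent
The delicate point is the forward direction: one must genuinely arrive at a \emph{pure} 1-$\Sigma^0_1$ fixed-point sentence $\sigma$ so that $\Sigma 1$ and $\Sigma 3$ apply to it without conversion. This rests on the normal-form convention for substitution that the paper foregrounds in the treatment of $\sigma^{\mf q}$ and witness comparisons; once that convention is in force, the case analysis above closes cleanly. The reverse direction is essentially formal once Pour-El--Kripke is available.
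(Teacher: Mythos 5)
Your reverse direction matches the paper's: the paper simply cites the Pour-El--Kripke isomorphism theorem and transfers the witness of $\Sigma^0_1$-representativity from ${\sf R}$ along the isomorphism; your added step of first deriving the effective inseparability of ${\sf R}$ from the forward direction (rather than taking it as known) is a nice bit of self-containment and is correct.

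Your forward direction, however, is genuinely different from the paper's main proof, and is essentially the \emph{alternative} proof the paper defers to Appendix~\ref{rosser} with the explicit warning that ``we have to be somewhat more careful with the details if we follow that road.'' The paper's main argument contains no fixed point at all: it picks one canonical effectively inseparable pair $\mc X_0, \mc X_1$ with pure $1$-$\Sigma^0_1$ representations $\sigma_0(x), \sigma_1(x)$, sets $\Theta(n) := \Psi(\sigma_0(\underline n)\leq\sigma_1(\underline n))$, checks directly from $\Sigma 1$ and $\Sigma 3$ that $\Theta$ maps $\mc X_0$ into $\downp U$ and $\mc X_1$ into $\downr U$, and then invokes the closure of effectively inseparable pairs under many-one reduction. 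This is both shorter and avoids the syntactic delicacy your Rosser argument runs into.

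That delicacy is a real gap in your write-up. You assert that after normalisation the fixed point $\sigma$ ``\emph{is} the witness comparison $\beta(\gn\sigma)\leq\alpha(\gn\sigma)$.'' Under a standard (monotone) G\"odel numbering a sentence cannot literally contain a numeral for its own code, so no normalisation convention will make $\sigma$ have $\gn\sigma$ as a subterm. This matters because $\Phi$ is merely a recursive function, not required to respect provable equivalence, so in $\Sigma 3$ you must feed in the \emph{literal} syntactic witness comparison. The way Appendix~\ref{rosser} handles it is to route the self-reference through a represented substitution function: $\phi(v)$ is built as a witness comparison whose two halves each contain the pure $\Delta_0$ predicate $\nu(w,v,v,z)$ coding ``$z={\sf Sub}(v,v)$,'' and $\rho:={\sf Sub}(\phi,\phi)$ is then literally of the shape $\rho_1<\rho_0$ with the $\rho_i$ pure $1$-$\Sigma^0_1$ sentences referring to $\rho$ only via $\nu$. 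The case analysis you run is then carried out on this $\rho_1<\rho_0$, not on $\beta(\gn\sigma)\leq\alpha(\gn\sigma)$. Your high-level case split (true/false-with-left-winning/neither) is the right shape, but as written you have collapsed the substitution machinery in a way that does not actually produce a legitimate pure $1$-$\Sigma^0_1$ witness comparison. Either supply the ${\sf Sub}$/$\nu$ construction, appeal to a self-referential numbering in the sense of Kripke (which the paper flags as an optional simplification), or switch to the paper's cleaner $m$-reduction argument.
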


\begin{proof}
Suppose $U$ is RE and consistent.

\medent
Suppose $U$ is $\Sigma^0_1$-representative as witnessed by $\Psi$. Let $\mc X_0$ and $\mc X_1$ be any pair of effectively inseparable sets.
Let $\sigma_0(x)$ be a pure 1-$\Sigma^0_1$-formula that represents $\mc X_0$ and 
let $\sigma_1(x)$ be a pure 1-$\Sigma^0_1$-formula that represents $\mc X_1$.
We write $\sigma_i(\underline n)$ for a pure 1-representation of the result of substituting $\underline n$ in $\sigma_i$.
We define \[\Theta(n) := \Psi(\sigma_0(\underline n) \leq \sigma_1(\underline n)).\]
Suppose $n\in \mc X_0$. Then,  $\sigma_0(\underline n) \leq \sigma_1(\underline n)$ is true
and, hence, $U \vdash \Theta(n)$. Suppose $n \in \mc X_1$. Then,
$\sigma_1(\underline n) < \sigma_0(\underline n)$ is true, and, hence, 
$U \vdash \neg\,\Theta(n)$.
 
 \medent
 For the converse, suppose $U$ is effectively inseparable. Then, by \cite[Theorem 2]{pour:dedu67}, we find that there is a recursive boolean isomorphism $\Psi$ from
  {\sf R} to $U$. We can take $\Psi$ restricted to pure 1-$\Sigma^0_1$-sentences
  as the function witnessing the $\Sigma^0_1$-representativity of $U$.
 \end{proof}
 
The first part of the proof of Theorem~\ref{reptoei} can also be done via a Rosser argument. We have to be somewhat more careful
with the details if we follow that road. We will give the argument in Appendix~\ref{rosser}.
 
 \begin{question}
It would be quite interesting to replace the $\sigma^{\mf q}$ in our definitions of friendliness and representativity by some other class of theories. However, the demands on
the $\sigma^{\mf q}$ use
both witness comparison and truth. So, it is not at all obvious here what more general analogues could be. 
\end{question}

\begin{example}
At this point the time is ripe to give some separating examples. We consider properties: {\sf P}1: undecidable, {\sf P}2: essentially undecidable, {\sf P}3:  essentially hereditarily undecidable,
{\sf P}4: recursively inseparable, {\sf P}5: effectively inseparable, {\sf P}6: $\Sigma^0_1$-friendly. We first give the list and then the description of
the examples below it.

\[
\begin{tabular}{|c||c|c|c|c|c|c|} \hline
example & {\sf P}1 & {\sf P}2  & {\sf P}3 & {\sf P}4 & {\sf P}5 & {\sf P}6 \\ \hline \hline
$U_0$ & $-$ & $-$ & $-$ & $-$ & $-$ & $-$ \\ \hline
$U_1$ & $+$ & $-$ & $-$ & $-$ & $-$ & $-$ \\ \hline
$U_2$ & $+$ & $+$ & $-$ & $-$ & $-$ & $-$ \\ \hline
$U_3$ & $+$ & $+$ & $+$ & $-$ & $-$ & $-$ \\ \hline
$U_4$ & $+$ & $+$ & $-$ & $+$ & $-$ & $-$ \\ \hline
$U_5$ & $+$ & $+$ & $-$ & $+$ & $+$ & $-$ \\ \hline
$U_6$ & $+$ & $+$ & $+$ & $+$ & $+$ & $-$ \\ \hline
$U_7$ & $+$ & $+$ & $+$ & $+$ & $+$ & $+$ \\ \hline
\end{tabular}
\]

\medskip
\begin{enumerate}[a.]
\item
We can take $U_0$ be any decidable theory like Presburger Arithmetic.
\item
We can take $U_1$ e.g. the theory of groups, which, by results of Tarski (\cite[Chapter III]{tars:unde53}) 
and Szmielev (\cite{szmi:elem55}), is hereditarily but not essentially undecidable.
\item
We can take $U_2$ to be Ehrenfeucht's theory (see \cite{ehre:sepa61}) which is essentially undecidable, but neither hereditarily undecidable, nor recursively inseparable.
\item
We can take $U_3$ to be a finitely axiomatised theory that is recursively boolean isomorphic to $U_2$. This theory is essentially hereditarily undecidable, but
not recursively inseparable.
\item
Let $d$ be an RE Turing degree with $0< d<0'$.
Suppose  $\mathcal A$, $\mathcal B$ is a recursively inseparable pair of RE sets as constructed by Shoenfield (see \cite{sho:degr58} or \cite{viss:nomi22}), where the 
Turing degree of  $\mathcal A$ is $d$ and the Turing degree of  $\mathcal B$ is $\leq d$. Let
 \[U_4:= {\sf Jan}+\verz{{\sf A}_n \mid n\in \mathcal A}+ \verz{\neg\, {\sf A}_n \mid n\in \mathcal B}.\]
 Then, $U_4$ is recursively inseparable, but cannot be effectively inseparable. Also, since $U_4$ contains a decidable sub-theory in the
 same language it cannot  be \ehu.
 \item
 We define $U_5$ like $U_4$ only now we take $\mathcal A$ and $\mathcal B$ to be effectively inseparable.
  \item
  We can take $U_6$ to be the theory of Hanf's example (Example~\ref{nofinex} in this paper) for the case that the recursively inseparable sets on which the construction is based
  are effectively inseparable.
  \item
  We can take $U_7$ to be, e.g., {\sf Q}.
  \end{enumerate}
  
  \noindent
  We note that our list shows that the evident dependencies of the concepts are all possible dependencies.
\end{example}

\begin{question}
Is there a  finitely axiomatised and effectively inseparable theory that is not $\Sigma^0_1$-friendly?
\end{question}

\section{Separating Model-Interpretability and Lax Interpretability}\label{sepasec}
In this section, we introduce the theory $\pams$ and prove some of its salient properties.
Most importantly, it will be an example of a $\Sigma^0_1$-friendly theory $U$ such that $U$ is a sub-theory of {\sf R} and $U \jump {\sf R}$, and $U \nrhd_{\sf mod} {\sf R}$.

\begin{remark}
Our theory $\pams$ is closely related to Vaught's theory {\sf S}. See \cite{vaug:theo62}.
\end{remark}

We define the theory ${\sf PA}^-_{\sf scat}$ as follows. It has the relational signature of arithmetic with $<$ minus the zero.
We write $\widetilde n(a)$ for $(\exists z\, z=\underline n)^{\mf q}$ with 0 replaced by the parameter $a$ plus `there are at least $n$ elements' relativised to 
the domain of the $y$ such that $a\leq y$.
We note that number and theory fix each other uniquely.
${\sf PA}^-_{\sf scat}$ is axiomatised by the axioms $\exists a\;\widetilde n(a)$ for $n\in \omega$.

Let ${\sf R}_{\sf succ}$ be the theory in the language with 0 and {\sf S}, axiomatised by $\underline n \neq\underline m$, where $n\neq m$.
We prove the following theorem. 
\begin{theorem}\label{grootmachtigesmurf}
We have:
\begin{enumerate}[a.]
\item
${\sf R} \sle  \pams$.
\item
$\pams$ is $\Sigma^0_1$-friendly and, hence, ${\sf PA}^-_{\sf scat}  \jump {\sf R}$.
\item
${\sf PA}^-_{\sf scat}\nrhd_{\sf mod}{\sf R}_{\sf succ}$.
\end{enumerate}
\end{theorem}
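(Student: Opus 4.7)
The plan is to prove the three parts in turn, with the last being the substantial one.

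For part (a), I show that each axiom $\exists a\, \widetilde n(a)$ of $\pams$ is provable in ${\sf R}$ by taking $a := 0$. Under this choice $\widetilde n(0)$ reduces to $(\exists z\, z = \underline n)^{\mf q}$ conjoined with the assertion that there are at least $n$ elements $\geq 0$. The first conjunct is $\sigma^{\mf q}$ for the true pure $\Sigma^0_1$-sentence $\sigma := \exists z\, z = \underline n$, hence is ${\sf R}$-provable by the clause of the earlier $\sigma^{\mf q}$-theorem handling true sentences. The second conjunct follows from axioms ${\sf R}3$, ${\sf R}4$, and ${\sf R}5$: these give the distinctness and non-negativity of $\underline 0,\dots,\underline{n-1}$. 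Hence ${\sf R} \sle \pams$.

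For part (b), I define $\Phi(\sigma) := \exists a\, \sigma^{\mf q, a}$, where $\sigma^{\mf q, a}$ is $\sigma^{\mf q}$ with the constant $0$ uniformly replaced by the parameter $a$, exactly as $\widetilde n(a)$ is formed from $(\exists z\, z = \underline n)^{\mf q}$. For property $\Sigma 1$, if $\sigma$ is true I choose $N$ large enough to bound both the least witness of $\sigma$ and all term-values occurring in its $\Delta_0$ body; purely logical manipulation then yields $\widetilde N(a) \to \sigma^{\mf q, a}$, and combined with the $\pams$-axiom $\exists a\, \widetilde N(a)$ this gives $\pams \vdash \Phi(\sigma)$. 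For property $\Sigma 2$, the parametric interpretation sending $0 \mapsto a$ and relativising quantifiers to $\{y : a \leq y\}$ shows $\pams + \Phi(\sigma) \rhd \sigma^{\mf q}$. Theorem~\ref{vettesmurf} then yields $\pams \jump {\sf R}$.

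For part (c), I exhibit a $\pams$-model $M$ admitting no internal ${\sf R}_{\sf succ}$-model. Take $M := \bigsqcup_{n \in \omega} I_n$, where each $I_n$ is a copy of the ${\sf TN}$-model on $\{0,1,\dots,n\}$, with $<, {\sf S}, +, \times$ the standard truncated operations inside each $I_n$ and all these relations false across distinct segments. Then the minimum of any $I_k$ with $k \geq n$ witnesses $\widetilde n(a)$, so $M \models \pams$. What remains is to show that no first-order translation $\tau$ (allowing parameters, piecewise definition, and higher dimension) defines an internal ${\sf R}_{\sf succ}$-model in $M$: equivalently, that there is no definable element $[0]$ in a definable domain $D \subseteq M^k$ together with a definable functional relation $[{\sf S}]:D\to D$ for which the iterates $[{\sf S}]^n[0]$ are pairwise distinct as $n$ ranges over $\omega$.

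This non-definability claim is the main obstacle. My plan is to analyse $\mathrm{Th}(M)$ via a back-and-forth / Ehrenfeucht--Fra\"\i ss\'e game tailored to the segment structure. The crucial observation is that two distinct segments $I_n, I_m$ are first-order indistinguishable in their \emph{relative} sizes: no first-order formula uniformly expresses ``$|I_n| < |I_m|$'' across $M$, because the relevant bijection/injection between segments is not first-order available. Consequently any definable function on a definable $D$ decomposes, up to a finite list of exceptional parameter-segments, into cases that act locally within individual segments or between a segment and a fixed finite set of parameter-segments. Each local case has only finite orbits, so no chain of distinct $[{\sf S}]^n[0]$ can be infinite, and $\pams \nrhd_{\sf mod} {\sf R}_{\sf succ}$ follows.
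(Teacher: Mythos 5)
Your parts (a) and (b) match the paper's proof essentially exactly: part (a) by instantiating the parameter at $0$, part (b) by defining $\Phi(\sigma) := \exists x\, \sigma^{\mf q}_x$ (the paper's notation) and citing Theorem~\ref{vettesmurf}. Your choice of model for part (c) is also the paper's model $\mathbb N_{\sf scat}$.

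Part (c) is where there is a real gap. The hard technical content of this direction has been replaced by a two-sentence claim --- ``any definable function... decomposes, up to a finite list of exceptional parameter-segments, into cases that act locally within individual segments or between a segment and a fixed finite set of parameter-segments. Each local case has only finite orbits'' --- that is neither proved nor, as stated, clearly correct. The ``crucial observation'' that ``no first-order formula uniformly expresses $|I_n|<|I_m|$'' is also imprecise: for any fixed $k$ there certainly is a formula $\card_k(x)$ saying the block of $x$ has exactly $k$ elements, so individual segments \emph{are} first-order distinguishable. The usable fact is weaker and has to be extracted with care. The paper does two things that your sketch does not: (1) it proves an explicit Feferman--Vaught-style normal form lemma, that over $\mathbb N_{\sf scat}$ every formula is equivalent, under an assumption fixing the $\sim$-type of the free variables, to a boolean combination of formulas relativised to single $\sim$-classes; and (2) it then runs a careful counting argument on the successor formula $\opv$ of a purported $m$-dimensional interpretation, isolating for each normal-form conjunct $\chi_{qp}$ the dichotomy between ``satisfiable in $<2m+1$ segments'' (record them all) and ``satisfiable in $\geq 2m+1$ segments'' (there is always a segment free of the $\leq 2m$ occupied ones to re-route the output into), so that every step of the successor chain can be redirected to land inside a fixed finite set $\mathcal N^\ast$, contradicting the pairwise-distinctness axioms of ${\sf R}_{\sf succ}$. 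Your ``finite orbits'' assertion is precisely what step (2) establishes, but the multi-dimensional case (where a single tuple touches several segments at once, and a successor step can move several coordinates into fresh segments simultaneously) is exactly where the $2m+1$ bound is needed and where an unargued ``locality'' claim is unconvincing. Neither the normal-form lemma nor the counting argument is even gestured at in the proposal, so as written part (c) does not constitute a proof.
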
 
\noindent We note that it follows that $\pams \gmutl {\sf R}$, and, hence, $\pams\gmut {\sf R}$.

We prove our theorem via a sequence of lemmas.
The following lemma is clear.

\begin{lemma}
${\sf R} \sle  \pams$.
\end{lemma}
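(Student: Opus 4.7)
The plan is to derive each of R1--R5 from the $\pams$-axiom $\exists a\;\widetilde n(a)$ for $n$ chosen large enough, under the natural identification of the constant $0$ of ${\sf R}$ with the existential parameter $a$ furnished by the axiom.

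First, I would unpack $\widetilde n(a)$. By the construction of $(\cdot)^{\mf q}$, the sentence $(\exists z\, z = \underline n)^{\mf q}$ with $0$ replaced by $a$ asserts the existence of a ``top'' element $w$ such that, inside the cut $\{x : a \leq x \leq w\}$ with $a$ reinterpreted as $0$, the theory ${\sf TN}$ holds and the element $S^n(a)$ exists strictly below $w$. The supplementary clause ``at least $n$ elements above $a$'' then guarantees that the iterated successors $a, S(a), \ldots, S^{n-1}(a)$ are pairwise distinct and that the cutoff feature of ${\sf TN}$ is not triggered on them.

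Second, I would verify R1--R5 one by one inside such a cut. R1 and R2 come from iterated application of ${\sf TN}7$--${\sf TN}10$, using that the relevant sums and products stay below $S^n(a)$ when $n$ is chosen larger than the numerals involved, so the cutoff never fires. R3 follows from the discreteness of $<$ in models of ${\sf TN}$ (as recorded in the paragraph following the ${\sf TN}$ axioms): successive successors of $a$ strictly increase below the top and are therefore pairwise distinct. R4 and R5 follow from ${\sf TN}2$ and ${\sf TN}3$ together with the discrete successor structure, which forces any element below $S^n(a)$ to coincide with one of $a, S(a), \ldots, S^{n-1}(a)$, and gives the trichotomy.

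The main delicacy---and presumably the reason the author flags the lemma as ``clear'' rather than needing an elaborate argument---is the alignment of languages: ${\sf R}$ mentions the constant $0$ while $\pams$ is set up in a purely relational signature without $0$. Once one systematically reads each ${\sf R}$-numeral $\underline k$ as the $k$-fold successor of the witness $a$ of the ambient $\pams$-axiom, each derivation above becomes a routine appeal to the ${\sf TN}$ axioms and the $(\cdot)^{\mf q}$-machinery developed earlier in the paper.
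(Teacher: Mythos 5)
You have the inclusion backwards. In the paper's conventions $\sls$ abbreviates $\subseteq_{\sf e}$ (theorems of the left theory are among theorems of the right) and $\sle$ abbreviates $\supseteq_{\sf e}$, so ${\sf R}\sle\pams$ says that $\pams$ is a \emph{sub-theory} of ${\sf R}$, i.e.\ that ${\sf R}$ proves every axiom $\exists a\,\widetilde n(a)$ of $\pams$. This is also what the intro to Section~\ref{sepasec} announces (``$U$ is a sub-theory of {\sf R}''). Your proposal instead tries to derive ${\sf R}1$--${\sf R}5$ from the $\pams$-axioms, which would establish the reverse inclusion ${\sf R}\sls\pams$. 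That statement is false: the model $\mathbb N_{\sf scat}$ constructed a page later in this very section satisfies $\pams$ but does not even carry an inner model of the much weaker theory ${\sf R}_{\sf succ}$ (that is Theorem~\ref{grootmachtigesmurf}(c)), so it certainly does not satisfy ${\sf R}$. Independently, the two theories are not in the same primitive signature: $\pams$ is relational and has no $0$, so ${\sf R}1$--${\sf R}5$, which are built from numerals, are not even $\pams$-sentences.

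The correct, genuinely ``clear'' argument is the one-liner you expected to find. Working in ${\sf R}$, instantiate $a$ by $0$. Then $\widetilde n(0)$ is (up to the harmless cardinality clause, which is immediate from ${\sf R}3$) exactly $(\exists z\, z=\num n)^{\mf q}$, and $\exists z\, z=\num n$ is a true pure $\Sigma^0_1$-sentence; so the earlier theorem on $(\cdot)^{\mf q}$ (``if $\sigma$ is true then ${\sf R}\vdash\sigma^{\mf q}$'') gives ${\sf R}\vdash\widetilde n(0)$, whence ${\sf R}\vdash\exists a\,\widetilde n(a)$. That the proof you wrote is long and delicate while the author declares the lemma clear should itself have been a red flag about the direction.
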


For any pure 1-$\Sigma^0_1$-sentence, $\sigv$ we 
define $\sigma^{\mf q}_x$ as the theory of $\sigv$ with  zero replaced by the free parameter $x$ on the domain of the $y$ such that $x\leq y$. We show the following lemma.

\begin{lemma}
The theory ${\sf PA}^-_{\sf scat}$ is $\Sigma^0_1$-friendly. Hence, ${\sf PA}^-_{\sf scat} \jump \spth S$ and ${\sf PA}^-_{\sf scat}\jump {\sf R}$.
\end{lemma}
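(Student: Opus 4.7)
The plan is to take the same $\Phi$ already used in the remark right after Theorem~\ref{vettesmurf} for $\sf R$, but in its ``shifted'' version adapted to $\pams$. Concretely, set
\[ \Phi(\sigma) \;:=\; \exists x\, \sigma^{\mf q}_x, \]
where $\sigma^{\mf q}_x$ is the formula (with $x$ free) obtained from $\sigma^{\mf q}$ by replacing $0$ by $x$ and relativising everything to $\{y : x\leq y\}$, exactly as the axioms $\widetilde n(a)$ of $\pams$ are built. This $\Phi$ is recursive, and we verify $\Sigma 1$ and $\Sigma 2$; then $\pams \jump \spth S$ and $\pams \jump {\sf R}$ follow from Theorem~\ref{vettesmurf}.

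For $\Sigma 2$ the interpretation is essentially built into the formulation: in the theory $\pams+\Phi(\sigma)$, fix a witness $x$ for $\exists x\,\sigma^{\mf q}_x$, and interpret $\sigma^{\mf q}$ parametrically by letting $x$ play the role of $0$, $\{y : x\leq y\}$ be the domain, and the successor, addition and multiplication be the usual cut-off operations in the sense of $\mf{tr}(\cdot)$, but shifted by $x$. Since $\sigma^{\mf q}_x$ was designed so that, under this translation, every axiom of $\sigma^{\mf q}$ becomes provable, we obtain $\pams+\Phi(\sigma) \rhd \sigma^{\mf q}$.

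For $\Sigma 1$, suppose $\sigma = \exists \vec u\, \delta(\vec u)$ is a true pure 1-$\Sigma^0_1$-sentence. Choose witnesses $\vec m$ and a natural number $N$ so large that all intermediate values of terms appearing during the pure $\Delta_0$ evaluation of $\delta(\vec m)$ are at most $N$, and then pick any $n \geq N$. The $\pams$-axiom $\exists a\,\widetilde n(a)$ furnishes, provably, an element $a$ together with the $\mf{tr}$-style $\sf TN$-structure of size at least $n$ sitting above $a$ and a witness for $\underline n$ above $a$. Inside this shifted initial segment, the witnesses $\vec m$ reappear as $a+\vec m$, all bounded by the witness of $\underline n$, and make $\delta$ true in the $\mf{tr}$-sense; hence $\pams \vdash \widetilde n(a) \to \sigma^{\mf q}_a$. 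Existentially quantifying $a$ yields $\pams \vdash \Phi(\sigma)$, establishing $\Sigma 1$.

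The main technical obstacle is the bookkeeping around the shift by $x$: one must check that the translation of a pure $\Sigma^0_1$-sentence via ``$0 \mapsto x$, bounded quantifiers relativised to $x\leq y$'' is stable under substitution of numerals and still coincides with the intended meaning (so that $\widetilde n(a)$ really does force $\sigma^{\mf q}_a$ whenever $n$ exceeds the computational bound for $\sigma$), and that the parametric interpretation verifying $\Sigma 2$ indeed validates all $\mf{tr}$-clauses for $+$, $\times$, $\sf S$ above the shifted zero $x$. These are the same subtleties flagged in the remark after Theorem~\ref{knutselsmurf}, and they are handled by the same pure-$\Sigma^0_1$ normal-form conventions used there.
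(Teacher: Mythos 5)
Your proposal is correct and takes essentially the same approach as the paper: the paper's proof is the one-liner that $\Phi(\sigma) := \exists x\, \sigma^{\mf q}_x$ "fulfils the conditions" for $\Sigma^0_1$-friendliness, using exactly the parametric interpretation with $x$ as the shifted zero. Your longer verification of $\Sigma 1$ (via a sufficiently large $\widetilde n(a)$) and $\Sigma 2$ (via the parametric translation with $x$ as parameter and domain $\{y : x \leq y\}$) just spells out the details the paper leaves to the reader, including the footnoted caveat that parameters are used.
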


\begin{proof}
It is easy to see that the mapping $\sigv \mapsto \exists x\, \sigma^{\mf q}_x$ fulfils the conditions for $\Phi$ in the definition of $\Sigma^0_1$-friendlyness.\footnote{This uses
interpretations with parameters. At the end of the section, we explain how to obtain the result, for a variant of
${\sf PA}^-_{\sf scat}$, in a parameter-free way. Here we loose the fact that our theory is contained in ${\sf R}_{<}$, but we still have that {\sf R} interprets it.}
\end{proof}

To prove Theorem~\ref{grootmachtigesmurf}(c), we need a counter-model.
We define $\mathbb N_{\sf scat}$, the model of the scattered numbers, for the signature of ${\sf PA}^-_{\sf scat}$ as follows.
It is the disjoint sum of the natural numbers considered as models of the theory of a number (in relational signature).
Modulo isomorphism, we can also define  $\mathbb N_{\sf scat}$ concretely as follows.
{\small
\begin{itemize}
\item
The domain of $\mathbb N_{\sf scat}$ is the set of $\tupel{n,m}$ with $m<n$.
\item
$\tupel{n,m} < \tupel{n',m'}$ iff $n=n'$ and $m<m'$.
\item
${\sf S}(\tupel{n,m},\tupel{n',m'})$ iff $n=n'$ and $m' = {\sf min}(m+1,n-1)$.
\item
${\sf A}(\tupel{n,m},\tupel{n',m'},\tupel{n'',m''})$ iff $n=n'=n''$ and $m'' = {\sf min}(m+m',n-1)$.
\item
${\sf M}(\tupel{n,m},\tupel{n',m'},\tupel{n'',m''})$ iff $n=n'=n''$ and $m'' = {\sf min}(m\times m',n-1)$.
\item
$\tupel{n,m} < \tupel{n',m'}$ iff $n=n'$ and $m<m'$.
\end{itemize}
}

Let $\sim$ be the equivalence relation on $\mathbb N_{\sf scat}$ given by $x<y \vee y \leq x$. 
We write $\braee x$ for  (the purely syntactic representation of) the $\sim$-equivalence class of $x$. Let  $\phi^{\braee x}$ be the relativisation of the 
quantifiers of $\phi$ to $\braee x$.\footnote{To be precise: we first rename the variables in $\phi$ so that there
is no bound variable labeled $x$ and then relativise.} 
We write $\imod n$ for the submodel (of the theory of $n$) with as domain the $\sim$-equivalence class of size $n$. 

Consider a finite set of free variables $X$. Let $\syneq$ be any equivalence relation on  $X$. So, $\syneq$ is a purely syntactic
relation.
We say that a formula is $\theta$ is  \emph{$\syneq$-good} if it is of the form $\chi^{\braee x}$, for some $x\in X$ and all free variables of
$\theta$ are $\syneq$-equivalent to $x$.  We say that a formula is \emph{$\syneq$-friendly} iff it is a boolean combination of $\syneq$-good formulas
of the form $\chi^{\braee x}$, where $x\in X$.
We define ${\sf E}_{\syneq}$ to be the conjunction of all $x\sim x'$ in case $x\syneq x'$
 and  $x\not\sim x'$ in case $x\not \syneq x'$, for $x,x'\in X$. (We assume that $X$ is part of the data for $\syneq$.)

We prove our third lemma. The lemma and its proof are well known for the case of binary disjoint sums of models.
The present result is just an adaptation.

\begin{lemma}
Let $X$ be some finite set of variables and let $ \vv x$ be some enumeration of $X$.
Consider a formula $\phi$ with free variables among $X$ and an equivalence relation $\syneq$ on $X$.
Then, there is an $\syneq$-friendly $\psi$ such that
\[\mathbb N_{\sf scat}\models \forall  \vv x\, ({\sf E}_{\syneq} \to (\phi \iff \psi)).\]
\end{lemma}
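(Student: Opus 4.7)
The plan is to proceed by induction on the complexity of $\phi$, in the spirit of a Feferman--Vaught reduction tailored to the concrete disjoint sum $\mathbb N_{\sf scat}$. In the atomic case, each primitive predicate in the signature of $\pams$ relates elements only within a single $\sim$-class, so if the free variables of $\phi$ span more than one $\syneq$-class then ${\sf E}_{\syneq}$ forces $\phi$ to be false and I take $\psi := \bot$; if they all lie in a single $\syneq$-class with representative $x$, then $\phi = \phi^{\braee x}$ (there are no quantifiers to restrict), so $\phi$ itself is $\syneq$-good. The boolean cases are immediate, since $\syneq$-friendly formulas are closed under boolean combinations.

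The essential case is $\phi = \exists y\, \phi'(y,\vec x)$. Let $R \subseteq X$ be a set of representatives of the $\syneq$-classes, and split according to where $y$ lives:
$$\exists y\, \phi' \;\iff\; \bigvee_{r \in R} \exists y\, (y \sim r \wedge \phi') \;\vee\; \exists y\, \Bigl(\bigwedge_{r \in R} y \not\sim r \wedge \phi'\Bigr).$$
For each disjunct of the first kind, extend $\syneq$ to $\syneq^{[r]}$ on $X \cup \{y\}$ by placing $y$ in the class of $r$, apply the induction hypothesis to obtain an $\syneq^{[r]}$-friendly $\psi'_r$ equivalent to $\phi'$ under ${\sf E}_{\syneq^{[r]}}$, put $\psi'_r$ in disjunctive normal form, and push $\exists y\, (y \sim r \wedge \cdot)$ through each clause: the conjuncts not involving $y$ pull out (and remain $\syneq$-friendly on $X$), while the conjuncts that do involve $y$ are all $\braee r$-good and so combine into a single $(\exists y\, \chi'_0)^{\braee r}$, again $\braee r$-good. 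For the last disjunct, extend $\syneq$ to $\syneq^{[*]}$ by making $y$ a fresh singleton class, apply the induction hypothesis, and push the existential through the DNF analogously; now the $y$-involving conjuncts of each clause combine into a $\braee y$-good $\chi_0^{\braee y}$ (with $\chi_0$ having at most $y$ free), and the residual existential asserts that some $\sim$-class of $\mathbb N_{\sf scat}$, distinct from every $\braee r$, satisfies the sentence $\xi := \exists y\, \chi_0$.

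The main obstacle is encoding precisely this residual assertion in $\syneq$-friendly form. Here the argument crucially exploits that $\mathbb N_{\sf scat}$ is a fixed model: its $\sim$-classes are exactly the $\imod m$ for $m \in \omega$, so the set $S_\xi := \{ m \in \omega : \imod m \models \xi\}$ is a definite subset of $\omega$ with a definite cardinality. If $S_\xi$ is infinite, no finite family $\{\braee r : r \in R\}$ can exhaust it and the residual assertion is vacuously true, so I replace it by $\top$. If $|S_\xi| = s < \infty$, the assertion holds exactly when strictly fewer than $s$ of the distinct classes $\braee r$ ($r \in R$) satisfy $\xi$, and this can be written as an explicit boolean combination of the $\syneq$-good sentences $\xi^{\braee r}$, hence is $\syneq$-friendly. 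In either case one obtains an $\syneq$-friendly formula equivalent to $\phi$ under ${\sf E}_{\syneq}$, closing the induction. The crucial point—and the reason the construction does not need an index-level formula to count classes, as a generic Feferman--Vaught reduction would—is that because $\mathbb N_{\sf scat}$ is fixed, the value of $|S_\xi|$ is semantically determined once and for all for each sentence $\xi$.
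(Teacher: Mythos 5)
Your proof is correct and follows the same induction scheme as the paper: the atomic and boolean cases are handled identically, and the existential case is decomposed according to where $y$'s $\sim$-class lands relative to the existing variables, which is exactly the paper's quantification over extensions $\syneq'$ of $\syneq$ to $X \cup \{y\}$. The only place you phrase things differently is the residual (fresh singleton) subcase: you encode the assertion ``some $\sim$-class outside the $\braee r$ satisfies $\xi$'' as a threshold condition on the boolean vector $(\xi^{\braee r})_{r\in R}$, namely ``strictly fewer than $s=|S_\xi|$ of them are true,'' while the paper instead enumerates the finitely many sizes $n_0,\dots,n_{s-1}$ of the $\sim$-classes satisfying $\xi$ and writes $\bigvee_{j<s}\bigwedge_{i<k}\neg\,\card^{\braee{x_i}}_{n_j}(x_i)$; these are equivalent $\syneq$-friendly formulations of the same count, so this is a cosmetic rather than structural difference, and both, as you note, hinge on the fixed model determining $|S_\xi|$ once and for all.
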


\begin{proof}
The proof is by induction on $\phi$.

We treat ${\sf A}xyz$ as a prototypical atomic formula, where {\sf A} stands for addition. 
In case ${\sf A}xyz$ is $\syneq$-good, we can take ${\sf A}xyz$ itself as $\psi$ noting that
  ${(\sf A}xyz)^{\braee x}$ is identical to ${\sf A}xyz$. In case ${\sf A}xyz$ is not $\syneq$-good, we can take $\psi$ to be
  $\bot$, seeing that ${\sf A}xyz$ is equivalent to $\bot$ under the assumption ${\sf E}_{\syneq}$ and $\bot = \bot^{\braee x}$.

We treat  $(\phi_0 \wedge \phi_1)$ as a prototypical case.
In ${\sf Th}(\mathbb N_{\sf scat})$, under the assumption ${\sf E}_{\syneq}$, by the Induction Hypothesis,
each of the $\phi_i$ is equivalent to an $\syneq$-friendly $\psi_i$. So, 
$(\phi_0 \wedge \phi_1)$ is equivalent to 
  $(\psi_0 \wedge \psi_1)$, which is $\syneq$-friendly.

  Finally, consider the case of $\exists z\, \phi'$. 
  Since we always can rename bound variables, we can assume that $z\not \in X$.
  Let $X' := X \cup\verz z$. We write ${\syneq} \sqsubset_z {\syneq'}$ if $\syneq'$ is an
  equivalence relation on $X'$ and $\syneq'$ restricted to $X$ is $\syneq$. 
  
  We reason in ${\sf Th}(\mathbb N_{\sf scat})$ under the assumption ${\sf E}_{\syneq}$.
  We note that $\exists z\, \phi'$ is equivalent to $ \bigvee_{{\syneq} \sqsubset_z {\syneq'}} \exists z\,({\sf E}_{\syneq'} \wedge \phi')$.
  We zoom in on some $\alpha :=({\sf E}_{\syneq'} \wedge \phi')$. By the induction hypothesis, this can be rewritten
  as the conjunction of ${\sf E}_{\syneq'}$ and a disjunction of formulas
  of the form $(\theta_0 \wedge \theta_1)$, where $\theta_0$ is $\syneq$-friendly and
  $\theta_1$ is $\syneq'$-good and of the form $\chi^{\braee y}$, where $y \syneq z$. If the $\syneq'$-equivalence class of
  $z$ contains at least two elements, we choose $y$ different from $z$. 
Given our assumption that ${\sf E}_{\syneq}$, the formula 
  $\alpha$ is equivalent to a disjunction of formulas of the form $(\theta_0 \wedge ({\sf E}_{\syneq'} \wedge \theta_1))$.
  It follows that $\exists z\ \alpha$ is equivalent to
   a disjunction of formulas of the form $(\theta_0 \wedge \exists z\, ({\sf E}_{\syneq'} \wedge \theta_1))$. 
   It clearly suffices to show that $ \exists z\, ({\sf E}_{\syneq'} \wedge \theta_1)$ is equivalent to
   an $\syneq$-friendly formula.
   
  There are two cases. The $\syneq'$-equivalence class of $z$ contains at least two elements or precisely one.
  
   In the first case we can replace $\exists z\, ({\sf E}_{\syneq'} \wedge \theta_1)$ by
   $\exists z\in \braee y\, \theta_1$, where $\theta_1$ is of the form $\chi^{\braee y}$. Clearly,  $\exists z\in \braee y\, \theta_1$ is $\syneq$-good.
   
    In the second case, at most $z$ occurs freely in $ \theta_1$. 
    Suppose $X=\verz{x_0,\dots,x_{k-1}}$. Let $\theta_1$ be $\chi^{\braee z}$.
    In the context of ${\sf E}_{\syneq}$, we can rewrite $\exists z\,({\sf E}_{\syneq'} \wedge \theta_1)$ to the formula
    $\exists z\, (\bigwedge_{i<k} z\not\sim x_i \wedge \theta_1)$.  
    In case $\chi^{\braee z}$ can be 
    fulfilled in more than $k$ submodels corresponding to numbers, $\exists z\, (\bigwedge_{i<k} z\not\sim x_i \wedge \theta_1)$
    will be true. So we can replace  $\exists z\, (\bigwedge_{i<k} z\not\sim x_i \wedge \theta_1)$
     by $\top$. If not $\chi(z)$ will be fulfilled in precisely  $\imod n_0, \dots, \imod n_{s-1}$.
    Let 
    \[  \card_n(u) := \exists v_0 \dots \exists v_{n-1}\, (\bigwedge_{i<j<n} v_i \neq v_j \wedge
     \forall w\, (w = u \iff \bigvee_{i<n} w=v_i)).\]
    We find that  $\exists z\, (\bigwedge_{i<k} z\not\sim x_i \wedge \theta_1)$ is equivalent to
    $\bigvee_{j<s} \bigwedge_{i<k}  \neg\, \card^{\braee {x_i}}_{n_j}(x_i)$, which is clearly $\syneq$-friendly.\footnote{Note that
    if $s=0$, this formula becomes $\bot$.}
\end{proof}

\begin{lemma}
There is no inner model of ${\sf R}_{\sf succ}$ in $\mathbb N_{\sf scat}$. 
\end{lemma}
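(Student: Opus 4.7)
The plan is to suppose for contradiction that an inner model of ${\sf R}_{\sf succ}$ in $\mathbb N_{\sf scat}$ exists, and then combine the friendliness lemma with pigeonhole to force the interpreted numerals into a finite set, contradicting the infinitude demanded by ${\sf R}_{\sf succ}$.

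First I fix a translation $\tau$ of some dimension $d$ with parameters $\vv p \in \mathbb N_{\sf scat}^k$ and formulas $\delta(\vv x;\vv p)$, $Z(\vv x;\vv p)$, $S(\vv x,\vv y;\vv p)$, $E(\vv x,\vv y;\vv p)$ defining an inner model of ${\sf R}_{\sf succ}$. Since the axioms $\num n \neq \num m$ require infinitely many distinct numerals, one may choose a sequence $(\vv x_n)_{n \in \omega}$ in $\mathbb N_{\sf scat}^d$ with $\delta(\vv x_n;\vv p)$, $Z(\vv x_0;\vv p)$, $S(\vv x_n,\vv x_{n+1};\vv p)$, and pairwise $\neg\, E(\vv x_n,\vv x_m;\vv p)$; in particular the $\vv x_n$ are pairwise distinct as tuples. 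Because each $\sim$-equivalence class of $\mathbb N_{\sf scat}$ is finite, and only finitely many $d$-tuples can be formed from any fixed finite union of classes, the components of the $\vv x_n$ must visit infinitely many distinct $\sim$-classes of $\mathbb N_{\sf scat}$.

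Next I apply the friendliness lemma to $S(\vv x,\vv y;\vv p)$, regarded as a formula in the $k+2d$ free variables. For each of the finitely many equivalence relations $\syneq$ on this index set, $S$ is equivalent modulo $E_\syneq$ to a boolean combination of friendly pieces, each a formula $\chi^{\braee{v}}$ relativised to the $\sim$-class attached to the $\syneq$-class of $v$. Double pigeonhole---first on the pattern $\syneq_n$ realised by $(\vv p,\vv x_n,\vv x_{n+1})$, then on the disjunct of the $\syneq^*$-friendly disjunctive normal form witnessing $S$---yields an infinite $N \subseteq \omega$ on which a common pattern $\syneq^*$ and a common conjunction $\bigwedge_j \theta_j$ of friendly pieces witness $S(\vv x_n,\vv x_{n+1};\vv p)$ for every $n \in N$. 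Partition the $\syneq^*$-equivalence classes of indices into those containing at least one $\vv p$- or $\vv x_n$-variable (``anchored'') and those consisting purely of $\vv x_{n+1}$-variables (``fresh''); fresh classes correspond to $\sim$-classes of $\mathbb N_{\sf scat}$ disjoint from every class used by $\vv p$ or $\vv x_n$, and the friendly piece for a fresh class is a formula in $\vv x_{n+1}$-components alone, independent of $n$.

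The core step is then to argue that the fresh components of $\vv x_{n+1}$, for $n \in N$, lie in a finite set $Y_\tau \subseteq \mathbb N_{\sf scat}$ depending only on $\tau$ and $\vv p$. Since $S$ is functional modulo $E$ on the interpretation, the fresh friendly pieces together with $E$ must pin down those components up to $E$; a parallel friendliness analysis of the formula $E$ bounds the ways an $E$-class can be spread across disjoint $\sim$-class configurations while still leaving room for $\omega$ many distinct $E$-classes among the numerals. A symmetric analysis of $S(\vv x_{n-1},\vv x_n;\vv p)$ handles fresh-for-$\vv x_n$ components, and induction along the sequence confines every component of every $\vv x_n$ to the finite union of $\sim$-classes touched by $\vv x_0$, $\vv p$, and $Y_\tau$, contradicting the first paragraph. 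The main obstacle is precisely the interaction between the functional-modulo-$E$ character of $S$ and the possibility that $E$ identifies tuples living in very different $\sim$-classes: a fresh friendly piece satisfied by infinitely many elements could, a priori, let the fresh components of $\vv x_{n+1}$ range over infinitely many $\sim$-classes so long as $E$ collapses the resulting tuples. Overcoming this requires pushing the friendliness lemma through $E$ itself and exploiting that $\omega$ many simultaneously distinct $E$-classes cannot all be supported by the same finite collection of friendly pieces over disjoint $\sim$-classes.
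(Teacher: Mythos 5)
Your proposal sets up the right machinery (friendliness over $\mathbb N_{\sf scat}$, pigeonhole, the anchored/fresh split in the successor formula), and you are honest about where the difficulty is, but the difficulty you name is genuine and your sketch does not close it. The step that fails is the ``core step'': trying to confine the \emph{given} interpreted numerals $\vv x_n$ to a finite union of $\sim$-classes. That is not in general true. A fresh friendly piece can indeed be satisfied in infinitely many $\imod n$, and nothing stops the chosen representatives of the numerals from marching off through infinitely many $\sim$-classes, with the congruence $E$ re-identifying them. So the induction you describe would have to ``push friendliness through $E$'', and your last paragraph is a gesture at this rather than an argument. That analysis of $E$ is not a small patch: $E$ is an $m{+}m$-ary formula, its friendly normal form varies with the equivalence pattern across $\vv x, \vv y$, and bounding ``how many $E$-classes a finite set of friendly pieces can support'' is essentially as hard as the lemma itself.

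The paper's proof avoids analysing $E$ altogether by turning the confinement step around. It does not argue that the original successor tuple $\vv c$ is confined; instead, given $\opv(\vv b, \vv c)$ it \emph{replaces} the fresh components of $\vv c$ with components drawn from a pre-computed finite set $\mathcal N^\ast$, obtaining $\vv c'$ with $\opv(\vv b, \vv c')$. The precomputation puts, for every equivalence pattern $\syneq$ and every fresh friendly piece $\chi_{qp}$ of $\opv$, either all $\imod n$ satisfying $\chi_{qp}$ (if fewer than $2m{+}1$ of them) or the first $2m{+}1$ of them into $\mathcal N$. The count $2m{+}1$ is the key quantitative ingredient: with at most $2m$ components in $\vv b, \vv c$ currently occupying $\sim$-classes, one can always pick a satisfying $\imod{n^\ast}\subseteq\mathcal N^\ast$ disjoint from all of them, so the replacement both preserves the realized pattern $\syneq$ and keeps $\opv$ true. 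Functionality of $\opv$ modulo $E$ is used only to guarantee that the replaced chain still enumerates pairwise distinct tuples (distinct numerals stay $E$-inequivalent, hence distinct as tuples); one never needs the friendly decomposition of $E$. The upshot: you should replace ``confine every $\vv x_n$'' by ``re-choose the successor inside $\mathcal N^\ast$ at every step'' and supply the $2m{+}1$ counting argument to justify that the re-choice is always possible; then the pigeonhole contradiction on tuples from the finite set $\mathcal N^\ast$ goes through exactly as you intend.
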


\begin{proof}
Since $\mathbb N_{\sf scat}$ has at least two elements, we do not need to consider piece-wise interpretations. Moreover,
every element in $\mathbb N_{\sf scat}$ is definable. So, we can always eliminate parameters. Thus it is sufficient to prove
our result for many-dimensional relativised interpretations without parameters.

Suppose we had an inner model of ${\sf R}_{\sf succ}$ given by an interpretion $M$. Say $M$ is $m$-dimensional 
and suppose $0$ is given by a formula $\nul( \vv x\,)$ and {\sf S} by
$\opv ( \vv x, \vv y\,)$.  We note that in the sequence $ \vv x, \vv y$ all the variables are pairwise disjoint. There are two conventional 
aspects. The variables in $\opv$ need only be among the $ \vv x, \vv y$, but not all need to occur. Secondly,
the order of the variables $ \vv x$, $ \vv y$ as exhibited need not be given by anything in $\opv$.

Our proof strategy is to obtain a contradiction by finding 
a finite set of numbers $\mathcal N$ and an infinite sequence of pairwise different sequences length $m$  with components
in the $\imod n$ for $n\in \mathcal N$.
We work in $\mathbb N_{\sf scat}$. 
\begin{enumerate}[i.]
\item
We fix an $m$-sequence $ \vv a$ such that $\nul( \vv a\,)$, in other words $ \vv a$ represents $0^M$.
We put the $n_i$ such that $\card_{n_i}(a_i)$ in $\mathcal N$.
\item
For each equivalence relation on the elements of $ \vv x, \vv y$ 
we add a set of numbers to $\mathcal N$. Consider a relation $\syneq$ on the elements of $ \vv x, \vv y$. 
Under the assumption ${\sf E}_\syneq$, we can rewrite
$\opv$ as $\bigvee_{q< r}\bigwedge_{p<s_q}\theta_{qp}$, where $\theta_{qp}$ is $\syneq$-good, say, it is  of the form $(\chi_{qp})^{\braee{w_{qp}}}$.
We can clearly arrange it so that (i) $w_{qp}$ is always the first in the sequence $ \vv x, \vv y$ of its $\syneq$-equivalence class and (ii)
if $p<p'$, then $w_{qp}$ occurs strictly earlier in $ \vv x, \vv y$ than $w_{qp'}$. So, if $p\neq p'$, we have $w_{qp} \not\syneq w_{qp'}$.
Consider any $\theta_{qp}$ where $w_{qp}$ is one of the $y_i$. There are two possibilities. 
\begin{enumerate}[I.]
\item
Suppose the number of $\imod n$ in which $\chi_{qp}$ is satisfiable is
$< 2m+1$. In this case we add all $n$ such that $\chi_{qp}$ is satisfiable in $\imod n$ to $\mathcal N$. 
\item
 Suppose the number of $\imod n$ in which $\chi_{qp}$ is satisfiable is
$\geq 2m+1$. In this case we add the first $2m+1$ such $n$ to $\mathcal N$.
\end{enumerate}
\item
Nothing more will be in $\mathcal N$.
\end{enumerate}

Let $\mathcal N^\ast$ be the elements in the $\imod n$, for $n\in \mathcal N$.
Clearly $\mathcal N$ is finite and so is the number of elements in $ \mathcal N^\ast$. 

We are now ready and set to define our infinite sequence in order to obtain the desired contradiction.
The sequence starts with $ \vv a$. We note that $ \vv a$ is in the domain of $M$ and that the components of $ \vv a$ are
in $\mathcal N^\ast$. Each element of the sequence will be in the domain of $M$ and its components will be in $\mathcal N^\ast$.
Suppose we have constructed the sequence up to $ \vv b$. Since $ \vv b$ is in the domain of $M$, there is a $ \vv c$ with
$\opv( \vv b, \vv c\,)$. We define $\syneq$ on the elements of $ \vv x, \vv y$ as follows.
We will say that $b_i$ is the value of $x_i$ and $c_j$ is the value of $y_j$.
Let $ \vv d =  \vv b, \vv c$ and $ \vv v =  \vv x, \vv y$. We take $v_i \syneq v_j$ iff $d_i \sim d_j$.
We note that we have
${\sf E}_\syneq[ \vv v: \vv d\,]$. We construct the formula  $\bigvee_{q< r}\bigwedge_{p<s_q}\theta_{qp}$ as before for $\syneq$.
So, for some $q<r$, we have $\bigwedge_{p<s_q}\theta_{qp}[ \vv v: \vv d\,]$.

Consider the variable $w_{qp}$. If it is an $x_i$, then all variables $y_i$ that are $\syneq$-equivalent to it, will have values that are
$\sim$-equivalent to $b_i$. So they will be in $\mathcal N^\ast$. If it is a $y_i$ and we are in case
(ii.I) of the construction of $\mathcal N$ the values of the variables equivalent to it
will be in $\mathcal N^\ast$. 
The final case is that  $w_{qp}$ is a $y_i$ and we are in case (ii.II) of the construction of $\mathcal N$.
Since there are at least $2m+1$ numbers $n$ such that $\imod n$ satisfies $\chi_{qp}$, we can always choose
an $n^\ast$ among these numbers such that no $d_i$ is in $\imod n^\ast$ such that $\imod n^\ast$ satisfies $\theta_{qp}$.
We assign to $y_i$ in the equivalence class of $w_{qp}$ the value $e_i$ so that under this assignment $\chi_{q,p}$ is satisfied.
We now modify our sequence $ \vv b, \vv c$ by replacing the $c_j$ by the $e_j$ for the cases where $y_j$ is in 
the equivalence class of $w_{qp}$. Say the new sequence is $ \vv b, \vv{c'}$. We note that the new sequence has strictly less
elements outside $\mathcal N^\ast$ and that we still have $\opv( \vv b, \vv {c'}\,)$.
We repeat this procedure for all $w_{qp}$ that are among the $y_i$. The final sequence we obtain will only have values in $\mathcal N^\ast$.
 
 By the axioms of ${\sf R}_{\sf succ}$, we we cannot have two elements in our sequence that are the same. A contradiction.
\end{proof}

We end this section by describing how we can make the result work for parameter-free interpretations.
A first step is to modify the definition of $\sigma^{\mf q}$, say, to $\sigma^{\mf q\ast}$. We remind the reader that we assume our
$\sigv$ are in pure form. In the definition of $\sigma^{\mf q}$ we just asked for there to be a witness of $\sigv$. 
For $\sigma^{\mf q\ast}$ we ask that the witness $w$ is the smallest one and that $w+1$ is the maximum element.

We now define ${\sf PA}^-_{\sf scat}!$ as the theory axiomatised by $\exists !x\,\sigma^{\mf q\ast}(x)$.
We note that $\mathbb N_{\sf scat}$ also satisfies ${\sf PA}^-_{\sf scat}!$.
The new theory is not a sub-theory of ${\sf R}_{<}$. However, the theory is locally finite, i.e., every finitely axiomatised sub-theory has a finite model.
So, by the main result of \cite{viss:whyR14}, we have ${\sf R} \rhd {\sf PA}^-_{\sf scat}!$.\footnote{We can also provide an interpretation is a
more direct way by mimicking the definition of $\mathbb N_{\sf scat}$ in {\sf R}.}
All our other arguments work with $\exists !x\,\sigma^{\mf q\ast}(x)$ replacing $\exists x\, \sigma^{\mf q}(x)$.
We note that  ${\sf PA}^-_{\sf scat}! + \exists !x\,\sigma^{\mf q\ast}(x)$ interprets $\sigma^{\mf q\ast}$ in a parameter-free way.
E.g., the definition of the domain becomes: $\delta(y) := \exists x\, (\sigma^{\mf q\ast}(x) \wedge x\leq y)$.

  \section{Non-Minimality}
  There is no interpretability minimal essentially hereditarily undecidable theory. There is a quick proof of this due to Fedor Pakhomov and there is
  a slow proof. Since, the slow proof yields different information, I do reproduce it here. See also Remark~\ref{smurferdiesmurf}.
  
  Here is the quick proof. 
  The proof is a minor adaptation of the proof of \cite[Theorem 1.1]{viss:nomi22} as given in Section~4.4 of that paper.
  
  \begin{theorem}\label{megasmurf}
  There is no interpretability minimal essentially hereditarily undecidable RE theory.
  \end{theorem}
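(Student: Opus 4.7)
The plan is to show that for every \ehu\ RE theory $U$, there is an \ehu\ RE theory $V$ strictly below $U$ in the interpretability ordering, adapting the argument for the analogous result about essentially undecidable theories in~\cite{viss:nomi22}. The key reduction is to take $V := U \ovee A$ for a carefully chosen \ehu\ auxiliary theory $A$ satisfying $A \not\rhd U$. Then $V$ is \ehu\ by Theorem~\ref{aquinosmurf}(b); $V \lhd U$ holds because $U \ovee A$ is the interpretability infimum of $U$ and $A$; and $U \not\lhd V$ follows because $U \lhd V$ together with $V \lhd A$ would force $U \lhd A$ by transitivity, contradicting $A \not\rhd U$.

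It therefore suffices to construct, for any given \ehu\ RE theory $U$, an \ehu\ RE theory $A$ that does not interpret $U$. My approach mirrors the construction of Example~\ref{nofinex}: work with a Janiczak-type base {\sf Jan} decorated with axioms ${\sf A}_n$ or $\neg\,{\sf A}_n$ according to a recursively inseparable pair $(\mathcal X,\mathcal Y)$, so that Lemma~\ref{handigesmurf} certifies \ehu. On top of this, an additional layer of requirements diagonalises against an enumeration $K_0, K_1, \dots$ of the candidate interpretations of $U$ into the theory under construction: at each stage $i$, one reserves a fresh Boolean combination of the ${\sf A}_n$ and uses it to refute some designated $U$-axiom under $K_i$. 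Applying Peretyat'kin's theorem (Theorem~\ref{pere}) at the end transports the resulting RE theory to a finitely axiomatised \ehu\ theory $A$ which still fails to interpret $U$, since $A$ shares the Boolean structure of its theorems with the RE theory from which it was produced.

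The main obstacle is the compatibility between the \ehu-preservation coding (the atoms ${\sf A}_n$ driven by the recursively inseparable pair) and the interpretation-blocking diagonalisation. The essentially undecidable version of the construction is already worked out in~\cite[Section~4.4]{viss:nomi22}; the minor adaptation needed here is to verify that the Boolean combinations used to block each $K_i$ can always be chosen disjoint from the coding atoms required by Lemma~\ref{handigesmurf}, so that the recursive inseparability encoding continues to certify \ehu\ throughout the construction. Once this compatibility is checked, the rest of the argument tracks the proof of Theorem~1.1 of~\cite{viss:nomi22} essentially unchanged.
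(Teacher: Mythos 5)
Your top-level reduction is sound and matches the paper's ``slow'' route: given any \ehu\ $U$, it suffices to produce an \ehu\ RE theory $A$ with $A \nrhd U$, because then $U \ovee A$ is \ehu\ (Theorem~\ref{aquinosmurf}), lies below $U$, and cannot be above $U$ (else transitivity through $A$ would give $A \rhd U$). This is exactly how the paper derives Theorem~\ref{megasmurf} from Theorem~\ref{supersmurf}. (The paper also gives a quite different ``quick'' proof, a recursion-theoretic complexity argument: $\{i : {\sf shpe}(i)\text{ is \ehu}\}$ is $\Pi^0_3$-complete via Shoenfield+Peretyat'kin+Rogers, while $\{i : {\sf shpe}(i) \rhd U^\star\}$ is $\Sigma^0_3$; if a minimum \ehu\ theory existed these would coincide. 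You did not attempt that route.)

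However, the construction of $A$ in your proposal has a genuine order-of-operations error. You propose to build an RE Janiczak-style theory with coding atoms ${\sf A}_n$ plus diagonalising axioms, and \emph{then} apply Peretyat'kin at the end, claiming the resulting finitely axiomatised $A$ ``still fails to interpret $U$, since $A$ shares the Boolean structure of its theorems with the RE theory from which it was produced.'' This does not follow: recursive boolean isomorphism tracks only the Lindenbaum algebra, and interpretability is in no way determined by it. The boolean image can gain or lose interpretative power arbitrarily. The paper therefore applies Peretyat'kin \emph{in the middle}: it forms $T := {\sf Jan}+\{{\sf A}_n \mid n\in {\sf Km}_0\}$, takes $A := {\sf pere}(s)$ right away, so that $A$ is a \emph{finitely axiomatised} theory with atoms ${\sf B}_i$, and only then runs the diagonalisation against translations $\tau_i$ directly into the $A$-language. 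The blocking step crucially works in the fixed finite signature of $A$, and the final theory $V = A + \{\neg{\sf B}_i \mid i\in {\sf Km}_1\cap\mc Z\}$ extends $A$ rather than being boolean-transported at the end.

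The ``compatibility'' problem you yourself flag is also not a minor adaptation, and your proposed resolution (``Boolean combinations used to block each $K_i$ can always be chosen disjoint from the coding atoms'') is not what makes the paper's argument work. The paper does \emph{not} keep the coding atoms untouched; rather, the diagonalisation determines an unbounded recursive set $\mc Z$, and the coding index set ${\sf Km}_1$ is \emph{thinned} to ${\sf Km}_1 \cap \mc Z$. To recover effective inseparability after this thinning, the paper uses a special variant of Kleene's construction (Lemmas~\ref{trivismurf} and \ref{bromsnorsmurf}) showing that ${\sf Km}_0$ and ${\sf Km}_1\cap\mc Z$ remain effectively inseparable whenever $\mc Z$ meets every column $\{\langle n,m\rangle : n\in\omega\}$. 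Only with that lemma can Lemma~\ref{handigesmurf} be invoked to certify that $V$ is \ehu. Without this thinning mechanism, the diagonalising axioms (some $\neg{\sf B}_i$ that you ``reserve'') would conflict with the requirement that ${\sf Km}_1$ map entirely into $\downr V$, and you would either lose the inseparability certification or lose consistency. So the sub-construction that your proposal defers as ``checked in~\cite{viss:nomi22} essentially unchanged'' is in fact the heart of the argument and cannot be filled in by the disjointness observation you suggest.
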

  
  \begin{proof}
  Since,  by Theorem~\ref{aquinosmurf}, the \ehu\ theories are closed under interpretability suprema, it is sufficient to show that
  there is no \emph{minimum} essentially hereditarily undecidable RE theory. Suppose, to obtain a contradiction, that
   $U^\star$ is such a minimum theory.
  
  Let $i$ be an index of an RE set. By a result of Shoenfield \cite{sho:degr58}, we can effectively find an index $j$ of the theory ${\sf sh}(i)$ such that
  ${\sf W}_i$ is not recursive iff ${\sf sh}(i)$ is essentially undecidable. 
  See also  \cite[Theorem 4.8]{viss:nomi22}. Next, by a result of Peretyat'kin \cite{pere:fini97},
   we can effectively find an index $k$ of a theory ${\sf pere}(j)$ that is finitely axiomatized and recursively Boolean isomorphic with
   ${\sf sh}(i)$. 
    Let us call this theory
  ${\sf shpe}(i)$. Since ${\sf shpe}(i)$ is essentially undecidable and finitely axiomatized it is essentially hereditarily undecidable.
  Let {\sf Rec} be the set of indices of recursive sets. We have:
  \begin{eqnarray*}
  i \not\in {\sf Rec} & \text{iff} & {\sf shpe}(i) \text{ is hereditarily essentially undecidable} \\
  & \text{iff} &  {\sf shpe}(i) \rhd U^\star.
  \end{eqnarray*}
  
  By a result of Rogers and, independently, Mostowski, {\sf Rec} is complete 
  $\Sigma^0_3$. See \cite[Chapter 14, Theorem XVI]{roge:theo67} or \cite[Corollary 4.3.6]{soar:turi16}.
 We have reduced the complement of {\sf Rec}, a $\Pi^0_3$-complete predicate, to an
  interpretability statement: a $\Sigma^0_3$-predicate. \emph{Quod impossibile}.
  \end{proof}
  
  We prove the non-minimality result w.r.t. interpretability for essentially hereditarily undecidable theories again using the idea behind the construction from the
  proof of \cite[Theorem~3.2 ]{hanf:mode65}, following the plan of the proof of \cite[Theorem 1.1.]{viss:nomi22} as given in Section~4.2 of that paper.
  
  We will need a variation on Kleene's well known construction of two effectively inseparable sets.
  We write $x\cdot y$ for Kleene application. For $i=0,1$, 
  let ${\sf Km}_i := \verz{\tupel{n,x} \mid x\cdot\tupel{n,x} \simeq i}$.

\begin{lemma}\label{trivismurf}
Suppose $\mathcal W$ is a recursive set. Let $\Theta$ be a 0,1-valued recursive function such that $\Theta(x)=1$ iff $x\in \mc W$.
We can  find an index $c$ of $\Theta$ effectively from an index $i$ of $\mc W$.
 Then, for any $n$, we have  $\tupel{n,c}\in   {\sf Km}_0\setminus \mc W$ or $\tupel{n,c}\in {\sf Km}_1 \cap \mc W$.
\end{lemma}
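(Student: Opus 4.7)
The plan is to simply unwind the definitions; the lemma is essentially a diagonal computation. The key observation is that, by hypothesis, $c$ is an index for $\Theta$, so $c\cdot y \simeq \Theta(y)$ for every $y$. In particular,
\[ c\cdot \tupel{n,c} \simeq \Theta(\tupel{n,c}). \]
Since $\Theta$ is total with values in $\verz{0,1}$, and $\Theta(x)=1$ iff $x\in \mc W$, this immediately gives a two-case analysis. If $\tupel{n,c}\in \mc W$, then $\Theta(\tupel{n,c})=1$, so $c\cdot \tupel{n,c}\simeq 1$ and $\tupel{n,c}\in {\sf Km}_1$; hence $\tupel{n,c}\in {\sf Km}_1\cap\mc W$. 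If $\tupel{n,c}\notin \mc W$, then $\Theta(\tupel{n,c})=0$, so $c\cdot \tupel{n,c}\simeq 0$ and $\tupel{n,c}\in {\sf Km}_0$; hence $\tupel{n,c}\in {\sf Km}_0\setminus\mc W$.

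For the effective production of $c$ from $i$, I would invoke the standard machinery. Given an index $i$ of the recursive set $\mc W$, we can effectively produce an index for a total $\verz{0,1}$-valued decision procedure for $\mc W$: first get an index for a total characteristic function (e.g.\ using an RE index for $\mc W$ together with one for its complement, dovetailing membership tests), and then apply the s-m-n theorem to obtain an index $c$ of the unary function $y\mapsto \Theta(y)$. Each step is primitive recursive in $i$, so $c$ is computable from $i$.

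The only thing worth flagging is the convention ``$\simeq$'' in the definition of ${\sf Km}_i$: since $\Theta$ is total, convergence is automatic, and $c\cdot\tupel{n,c}\simeq i$ reduces to ordinary equality $c\cdot\tupel{n,c}=i$, which is exactly what we established in the case split. So no obstacle arises there; the result is really a one-line diagonal identity dressed up for its later use in the non-minimality proof.
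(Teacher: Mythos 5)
Your proof is correct and follows the same route as the paper: both simply compute $c\cdot\tupel{n,c}\simeq\Theta(\tupel{n,c})$ and split on whether $\tupel{n,c}\in\mc W$, reading off membership in ${\sf Km}_1\cap\mc W$ or ${\sf Km}_0\setminus\mc W$ respectively. The extra remarks on extracting $c$ effectively from $i$ via s-m-n are a reasonable unpacking of what the paper leaves implicit, but the argument is otherwise identical.
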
  
 
\begin{proof}
We have:
\qedright
\begin{eqnarray*}
\tupel{n,c} \not\in \mc W & \Iff  & c\cdot \tupel{n,c} \simeq 0 \\
& \Iff & \tupel{n,c}\in {\sf Km}_0\\
\tupel{n,c} \in \mc W & \Iff  & c\cdot \tupel{n,c} \simeq 1 \\
& \Iff & \tupel{n,c}\in {\sf Km}_1
\end{eqnarray*}
\end{proof}

\begin{lemma}\label{bromsnorsmurf}
Suppose $\mc Z$ is an RE set such that, for every $m$, there is an $n$ such that $\tupel{n,m}\in \mc Z$.
Then ${\sf Km}_0\cap \mc Z$ and  ${\sf Km}_1\cap \mc Z$ are effectively inseparable. 
\end{lemma}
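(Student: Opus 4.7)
The plan is to adapt the standard Kleene argument that $\sf Km_0$ and $\sf Km_1$ are effectively inseparable, using the hypothesis on $\mc Z$ to move the witness into $\mc Z$.

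Suppose $W_{i_0}\supseteq {\sf Km}_0\cap \mc Z$, $W_{i_1}\supseteq {\sf Km}_1\cap \mc Z$, and $W_{i_0}\cap W_{i_1}=\emptyset$. Define a partial recursive function $g$ that, on input $y$, enumerates $W_{i_0}$ and $W_{i_1}$ in parallel and returns $1$ if $y$ is enumerated into $W_{i_0}$ first and $0$ if it is enumerated into $W_{i_1}$ first. Since $W_{i_0}$ and $W_{i_1}$ are disjoint, $g$ is well-defined. By the $s$-$m$-$n$ theorem, an index $c$ of $g$ can be computed effectively from $i_0,i_1$.

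Since $\mc Z$ is RE and, by hypothesis, for every $m$ there is some $n$ with $\tupel{n,m}\in \mc Z$, we can effectively search for an $n$ such that $\tupel{n,c}\in \mc Z$; this search terminates. We claim $\tupel{n,c}\notin W_{i_0}\cup W_{i_1}$. Indeed, if $\tupel{n,c}\in W_{i_0}$, then $c\cdot\tupel{n,c}\simeq 1$, so $\tupel{n,c}\in {\sf Km}_1$, and hence $\tupel{n,c}\in {\sf Km}_1\cap \mc Z\subseteq W_{i_1}$, contradicting disjointness; the case $\tupel{n,c}\in W_{i_1}$ is symmetric. So setting $\Phi(i_0,i_1):=\tupel{n,c}$ gives the required total recursive separating function.

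The only mildly subtle step is arranging that the search for $n$ is guaranteed to terminate, which is exactly what the hypothesis on $\mc Z$ delivers; everything else is the boilerplate Kleene diagonalization. I do not expect any real obstacle here.
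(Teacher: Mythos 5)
Your proof is correct. It uses the same underlying Kleene diagonalization, with the same use of the hypothesis on $\mc Z$ (for every $m$ there is $n$ with $\tupel{n,m}\in\mc Z$, so the search for $n$ with $\tupel{n,c}\in\mc Z$ terminates and the diagonal point lands in $\mc Z$), but it is packaged slightly differently from the paper. The paper factors through Lemma~\ref{trivismurf}, which is stated for a \emph{recursive} purported separator $\mc W$ with a \emph{total} $0,1$-valued characteristic function $\Theta$; accordingly, the paper's proof of the present lemma takes the form ``no decidable $\mc W$ separates the two sets, with the misclassified point found effectively.'' You instead address the paper's own definition of effective inseparability head-on: given disjoint RE supersets $W_{i_0}\supseteq{\sf Km}_0\cap\mc Z$ and $W_{i_1}\supseteq{\sf Km}_1\cap\mc Z$, you build the diagonalizing index $c$ from the \emph{partial} recursive function $g$ obtained by parallel enumeration, and conclude directly that $\tupel{n,c}\notin W_{i_0}\cup W_{i_1}$. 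This handles transparently the case $W_{i_0}\cup W_{i_1}\neq\mathbb N$, which the recursive-separator form leaves implicit, and your sign convention (return $1$ on $W_{i_0}$, $0$ on $W_{i_1}$) is the right one: membership of $\tupel{n,c}$ in $W_{i_k}$ would force $c\cdot\tupel{n,c}\simeq 1-k$, hence $\tupel{n,c}\in{\sf Km}_{1-k}\cap\mc Z\subseteq W_{i_{1-k}}$, contradicting disjointness. In short, same core idea, but your route goes straight to the RE-supersets characterization instead of via the total-characteristic-function lemma, and is arguably the more standard presentation of Kleene's effective inseparability argument.
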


\begin{proof}
Suppose $\mc W$ with index $i$ is decidable and that $\mc W$ separates ${\sf Km}_0\cap \mc Z$ and  ${\sf Km}_1\cap \mc Z$.
Let $\Theta$ and $c$ be as in Lemma~\ref{trivismurf}. We find $n$ such that $\tupel{n,c}\in \mc Z$. By Lemma~\ref{trivismurf}, we have
 $\tupel{n,c}\in   {\sf Km}_0\setminus \mc W$ or $\tupel{n,c}\in {\sf Km}_1 \cap \mc W$. In the first case $\tupel{n,c}\in   ({\sf Km}_0\cap \mc Z)\setminus \mc W$.
 \emph{Quod non}, by our assumption that $\mc W$ separates  ${\sf Km}_0\cap \mc Z$ and  ${\sf Km}_1\cap \mc Z$. In the second case, we have
 $\tupel{n,c}\in {\sf Km}_1\cap \mc Z \cap \mc W$, again contradicting the assumption. 
\end{proof}

\begin{theorem}\label{supersmurf}
Consider any essentially undecidable RE theory $U$. Then, we can effectively find \textup(an index of\textup) an 
essentially hereditarily undecidable RE theory
$V$ such that $V\nrhd U$. Moreover, we can take $V$ to be effectively inseparable.
\end{theorem}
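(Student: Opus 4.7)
The plan is to follow the construction of Example~\ref{nofinex}, now feeding it effectively inseparable raw material (from Lemmas~\ref{trivismurf} and \ref{bromsnorsmurf} rather than merely recursively inseparable sets) and layering on a diagonalisation tuned to the specific theory $U$ in place of Hanf's universal blocking of all finitely axiomatised essentially undecidable theories. Concretely, working over ${\sf Jan}$ with its independent sentences $\{{\sf A}_n\}$, I would build disjoint RE sets $\mc X_0,\mc X_1$ and set
\[V_+ := {\sf Jan} + \{{\sf A}_n : n \in \mc X_0\},\quad B := {\sf pere}(V_+),\quad {\sf B}_n := \Psi({\sf A}_n),\quad V := B + \{\neg{\sf B}_n : n \in \mc X_1\},\]
where $\Psi$ is the recursive boolean isomorphism furnished by Peretyat'kin's Theorem~\ref{pere}.

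Arranging $(\mc X_0,\mc X_1)$ to be effectively inseparable via the Kleene construction of Lemma~\ref{bromsnorsmurf} gives essential hereditary undecidability of $V$ by Lemma~\ref{handigesmurf} (with $U_0 := B$ and $\Phi(n) := {\sf B}_n$), exactly as in Example~\ref{nofinex}, and the recursive reduction $n\mapsto {\sf B}_n$ transfers effective inseparability from $(\mc X_0,\mc X_1)$ to the provables and refutables of $V$. The delicate requirement is $V\nrhd U$. For this I would enumerate the candidate translations $K_0,K_1,\dots$ of the $U$-language into $B$'s language and interleave a diagonalisation step against each $K_e$ into the staged construction of $(\mc X_0,\mc X_1)$.

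The diagonalisation relies on Theorem~\ref{janicz}: every $B$-sentence is effectively equivalent over $B$ to a boolean combination of the independent atoms $\{{\sf B}_n\}$. For each $K_e$, I would search among the enumerated $U$-theorems $\phi$ for one whose $K_e$-translate $\phi^{K_e}$ reduces to a boolean combination that can be falsified by committing currently-free indices into $\mc X_0$ or $\mc X_1$ with an appropriate sign. The main obstacle is establishing that this search terminates whenever $K_e$ is a genuine interpretation of $U$: here the argument shows that if the search were to fail permanently then, thanks to the eventual dependence of the reductions on a finite set of ${\sf B}_n$'s (together with the finite sign-constraints imposed by $(\mc X_0,\mc X_1)$ on those atoms), the set $\{\phi : V\vdash\phi^{K_e}\}$ would collapse to a fixed finite propositional question in $\phi$ and thus become a decidable consistent extension of $U$, contradicting essential undecidability. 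Coordinating these commitments with the Kleene task, so as not to disturb effective inseparability, follows the finite-injury pattern of \cite{viss:nomi22}, and all steps are effective in an index of $U$, yielding an effectively computable index of $V$.
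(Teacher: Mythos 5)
Your scaffolding matches the paper's shape --- pass ${\sf Jan}+\verz{{\sf A}_n : n\in{\sf Km}_0}$ through Peretyat'kin, let $V$ be the resulting finitely axiomatised $B$ plus refutations of atoms ${\sf B}_n$, and invoke Lemmas~\ref{trivismurf}, \ref{bromsnorsmurf} and \ref{handigesmurf} --- but the mechanism you propose for securing $V\nrhd U$ is different from the paper's and has a gap. The paper does \emph{not} run a finite-injury construction on the Kleene pair. It keeps $\mc X_0 = {\sf Km}_0$ fixed and takes $\mc X_1 = {\sf Km}_1\cap\mc Z$, where $\mc Z$ is a \emph{recursive marker set computed in advance from $A$ and $U$ alone}, before $V$ enters the picture: for every translation $\tau_i$ and every one of the $2^n$ sign-patterns ${\sf C}_{n,j}$ on the atoms below $n$, the complete decidable theory ${\sf V}_{n,j} := A + {\sf C}_{n,j} + \verz{{\sf B}_k : k\ge n}$ must refute the $\tau_i$-translate of some $U$-theorem; maximising a bound over $j<2^n$ and iterating gives a strictly increasing recursive sequence of markers. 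Since $V$ has no negative axiom strictly between consecutive markers, a $V$-consequence in the atoms below a marker is decided by some consistent ${\sf V}_{n,j}$ at that marker, which clashes with the fact that some translated theorem is refuted there. Effective inseparability of $({\sf Km}_0,{\sf Km}_1\cap\mc Z)$ falls out of Lemma~\ref{bromsnorsmurf} because each marker sits in the column of its predecessor. No Recursion Theorem, no injuries.

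Your on-line alternative has two concrete problems. First, $B$ is ${\sf pere}$ of ${\sf Jan}+\verz{{\sf A}_n : n\in\mc X_0}$, so the atoms ${\sf B}_n$ are only determined once an index of $\mc X_0$ is; growing $\mc X_0$ dynamically in response to facts about $B$ requires the Recursion Theorem to close the loop and then a verification that the extra commitments do not destroy the Kleene-type effective inseparability --- both of which you leave unaddressed. Second, and decisively, your termination argument is incorrect: if the search for $K_e$ fails forever, it does \emph{not} follow that $\verz{\phi : V\vdash\phi^{K_e}}$ collapses to a fixed finite propositional question. A genuine interpretation translates successive $U$-theorems into boolean combinations of ever-higher atoms, and the search can stall indefinitely because each candidate reduction happens to be forced by a previously issued commitment; no finite-basis phenomenon is implied. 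The lever that actually works is the uniform quantifier over all $2^n$ sign-patterns on a fixed initial segment, together with completeness and decidability of each ${\sf V}_{n,j}$; this is exactly the information the paper packs, in advance, into the marker set $\mc Z$, and it cannot be recovered by an online search one $\phi$ at a time.
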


\begin{proof}
Let $T := {\sf Jan}+\verz{{\sf A}_n \mid n\in {\sf Km}_0}$. Let $s$ be an index of $T$. We take $A := {\sf pere}(s)$. So, $A$ is finitely axiomatised and
  recursively boolean isomorphic to $T$. Let $\Phi$ be the witnessing recursive isomorphism from $V$ to $A$ and let ${\sf B}_i := \Phi({\sf A}_i)$.
Clearly, over $A$, every sentence is provably equivalent to a boolean combination of the ${\sf B}_i$.

Let ${\sf C}_{n,0},\dots, {\sf C}_{n,2^n-1}$ be an enumeration of all conjunctions
of $\pm {\sf B}_i$, for $i<n$.
Suppose $U$ is an essentially undecidable RE theory.
Let $\upsilon_0,\upsilon_1,\dots$ be an effective enumeration of the theorems
of $U$. Let $\tau_0,\tau_1,\dots$ be an effective enumeration of all translations from
the $U$-language into the $A$-language.\footnote{We will do the argument for the case
of parameter-free translations. By a minor adaptation, we can add parameters.}

Consider $n$, $\tau_i$ and ${\sf C}_{n,j}$, for $j<2^n$. Let ${\sf V}_{n,j} :=
A+{\sf C}_{n,j}+\verz{{\sf B}_k \mid k\geq n}$. Clearly, ${\sf V}_{n,j}$ is
either inconsistent or consistent and complete.
We claim that, for some $k$, we have ${\sf V}_{n,j} \vdash \neg\, \upsilon^{\tau_i}_k$.
Suppose this were not the case. Then, ${\sf V}_{n,j}$ is consistent and
 $\tau_i$ carries an interpretation of $U$ in ${\sf V}_{n,j}$, but this is impossible since
 ${\sf V}_{n,j}$ is decidable and $U$ is essentially undecidable.

Thus, we can effectively find a number $p_{n,i,j}$ as follows.
We find the first $k$ such that ${\sf V}_{n,j} \vdash \neg\, \upsilon^{\tau_i}_k$. 
Then, we reduce,  the sentence $\upsilon^{\tau_i}_k$ to a boolean combination of
${\sf B}_s$ over $A$. 
Let $p_{n,i,j}$ the smallest number of the form $\tupel{r,n}$ that is strictly larger
 than the $s$ such that ${\sf B}_s$ occurs in this boolean combination.

We define $\eta(n,i)$ to be the maximum of the $p_{n,i,j}$, for $j<2^n$.
Let $\Psi(0) := 0$ and let $\Psi(k+1):= \eta(\Psi(k),k)$. Clearly, $\Psi$ is recursive and strictly increasing.
Let $\mathcal Z$ be the range of $\Psi$. The set $\mathcal Z$ is obviously recursive.

We define $V: = A+\verz{\neg\,{\sf B}_i \mid i \in {\sf Km}_1 \cap \mathcal Z}$.
Suppose, to obtain a contradiction, that we have $K:V \rhd U$.
Let the underlying translation of $K$ be $\tau_{n^\ast}$.

Clearly, there is a $j^\ast$ such that 
$V+{\sf C}_{\Psi(n^\ast),j^\ast}$ is consistent. (This is a non-constructive step.)
By construction, there is a
 $\phi$ with $U \vdash \phi$ and ${\sf V}_{\Psi(n^\ast),j^\ast}\vdash\neg\, \phi^K$. Moreover, 
   $A \vdash \phi^K \iff \rho$, where $\rho$ is a
 boolean combination   of ${\sf B}_s$ with $s<\Psi(n^\ast)$.
We note that no  $\neg\, {\sf B}_r$ with $\Psi(n^\ast) < r < \Psi(n^\ast+1)$ occurs in the axiomatisation of $V$. 
By our assumption on $K$, we have $V \vdash \phi^K$ and, so, $V \vdash \rho$. It follows that 
\[ A+\verz{\neg\,{\sf B}_i \mid i \in {\sf Km}_1 \cap \mathcal Z \text{ and } i<\Psi(n^\ast+1)} \vdash \rho.\]
Hence, also ${\sf V}_{\Psi(n^\ast),j^\ast} \vdash \rho$, i.e., ${\sf V}_{\Psi(n^\ast),j^\ast} \vdash \phi^K$,
A contradiction. 

We verify that $V$ is  essentially hereditarily undecidable and effectively inseparable. 
We note that $\Xi$ with $\Xi(n) := {\sf B}_n$ maps ${\sf Km}_0$ into $\downp{A}$ and
${\sf Km}_1\cap \mc Z$ into $\downr{V}$. It is immediate from Lemma~\ref{bromsnorsmurf}
that ${\sf Km}_0$ and ${\sf Km}_1\cap \mc Z$ are effectively inseparable, hence so is
$V$. Finally, by Lemma~\ref{handigesmurf}, we find that $V$ is essentially hereditarily undecidable.
\end{proof}

Since the essentially hereditarily undecidable RE theories are closed under interpretability infima,
we again obtain Theorem~\ref{megasmurf} from Theorem~\ref{supersmurf}.

\begin{remark}\label{smurferdiesmurf}
Yong Cheng notes that the argument of \cite[Section 4.2]{viss:nomi22} yields more information 
concerning the possible classes for which the no minimality result holds. See \cite{chen:mini22}.
His insights can be applied to  our case. 
For example, we find that Theorem~\ref{supersmurf}  tells us that there
is no interpretability minimal element among essentially hereditarily undecidable theories that is
also effectively inseparable. 
\end{remark}


\appendix

\section{Parametrically Local Interpretability}\label{paraloca}
In this appendix, we present a reduction notion, to wit \emph{c-lax interpretability}, that  improves upon both
lax interpretability and parametrically local interpretability w.r.t. the treatment of essential hereditary undecidability.

\subsection{Parameters and Finite Axiomatisability}
Our preferred treatment of interpretations with parameters uses a parameter domain $\pi$.
Let $V$ be the interpreting theory and let $U$ be the interpreted theory and let $\tau$ be the translation.
We demand that (a) $V \vdash \exists \vv p\, \pi(\vv p)$ and (b) if 
$U \vdash \phi$, then $V \vdash \forall \vv q\, (\pi (\vv q) \to \phi^{\tau,\vv q})$.

In case $U$ is finitely axiomatised, we can take a short-cut and drop the parameter domain. Let $\alpha$ be the
conjunction of the axioms of $U$ plus the axioms of the theory of identity.
We ask: (a$'$) $V \vdash \exists \vv p\; \alpha^{\tau, \vv p}$.

We note that (a) and (b) immediately imply (a$'$). Conversely, if we have (a$'$), we can define a parameter
domain $\pi^\ast(\vv p) :\iff \alpha^{\tau,\vv p}$ and use the equivalence between axioms- and theorems-interpretability
to obtain (a) and (b) for $\pi^\ast$.

We note that, if we start with a parameter-domain $\pi$ that satisfies (a) and (b), then $\pi^\ast$ is an extension of $\pi$.

\subsection{Parametric Local Interpretability}
We say that  \emph{$V$ parametrically locally interprets $U$}, or $V \rhd_{\sf pl} U$, iff, there is a parametric translation $\tau$ of the $U$-language to the
$V$-language, such that, whenever $U \vdash \phi$, then $V \vdash \exists \vv p\; \phi^{\tau,\vv p}$.
It is easy to see that $\rhd_{\sf pl}$ is transitive, using the known properties of parametric interpretability.

\begin{theorem}
Let $U$ be a theory in a language $\mc L$ extended with constants $\vv c$. 
Let $U^\ast$ be the theory axiomatised by the $\mc L$-consequences of $U$.
Then, $U \mutint_{\sf pl} U^\ast$.
\end{theorem}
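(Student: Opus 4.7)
The plan is to prove the two directions separately, with the right-to-left direction being essentially trivial and the interesting content in the left-to-right direction (namely $U^\ast \rhd_{\sf pl} U$).

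For $U \rhd_{\sf pl} U^\ast$, I would just use the inclusion of languages $\mathcal{L} \hookrightarrow \mathcal{L} \cup \{\vec c\}$ as a parameter-free identity translation $\iota$. By the very definition of $U^\ast$, if $U^\ast \vdash \phi$ then $\phi$ is an $\mathcal{L}$-sentence that $U$ proves, so $U \vdash \phi^\iota = \phi$. This is a (trivially parametric, with empty parameter list) local interpretation.

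For the other direction $U^\ast \rhd_{\sf pl} U$, the idea is to replace the constants $\vec c$ by parameters. Fix fresh parameter variables $\vec p = p_1,\dots, p_k$ in one-to-one correspondence with $\vec c = c_1,\dots,c_k$, and let $\tau$ be the translation that is the identity on the symbols of $\mathcal{L}$ and sends each $c_i$ to $p_i$. Given any $\phi$ with $U \vdash \phi$, write $\phi$ as $\phi_0(\vec c)$ where $\phi_0(\vec x)$ is an $\mathcal{L}$-formula, so that $\phi^{\tau,\vec p} = \phi_0(\vec p)$. Then by existential generalisation $U \vdash \exists \vec x\, \phi_0(\vec x)$, and since this is an $\mathcal{L}$-sentence it belongs to $U^\ast$. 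Hence $U^\ast \vdash \exists \vec p\, \phi^{\tau,\vec p}$, verifying parametric local interpretability.

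The main thing to be careful about is the treatment of constants under the paper's preferred relational reformulation of languages with terms: a constant $c_i$ becomes a $0$-ary function, which in turn is handled by a predicate and an existence-and-uniqueness axiom. Under $\tau$ this predicate must be mapped to the formula expressing equality with the parameter $p_i$, which is precisely what introducing a parameter does. Once this bookkeeping is settled, the argument above goes through verbatim; no nontrivial closure of $U^\ast$ under further deductions is needed, because all that is required is that the single $\mathcal{L}$-sentence $\exists \vec x\,\phi_0(\vec x)$ be a consequence of $U$.
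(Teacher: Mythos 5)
Your proof is correct and takes the same approach as the paper's (which gives only a one-line description of the translation $\tau$ sending each $c_i$ to a parameter $p_i$); you simply fill in the routine verification and note the trivial converse direction explicitly. The remark about how the parameter must realize the unary relation coding the $0$-ary function $c_i$ in the relational reformulation is a sensible bookkeeping observation that the paper leaves implicit.
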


\begin{proof}
We interpret $U$ in $U^\ast$ via $\tau$ which is the identical translation on the vocabulary of $\mc L$ and
which interprets constant $c_i$ as parameter $p_i$.
\end{proof}

\subsection{c-Essential Undecidability}
Consider a theory $U$. We write $U^{(\vv c)}$ for the theory $U$ with its signature expanded with constants $\vv c$.
Tarski in \cite{tars:unde53} calls this \emph{an inessential extension}.

We write $U \subseteq_{\sf c} V$ for: for some $\vv c$, we have $U^{(\vv c)} \sls V$.
We use \emph{c-essentially} in the obvious way.

\begin{theorem}
\begin{enumerate}[a.]
\item
$U$ is c-essentially undecidable iff $U$ is essentially undecidable. 
\item
$U$ is c-essentially hereditarily undecidable iff $U$ is essentially hereditarily undecidable. 
\end{enumerate}
\end{theorem}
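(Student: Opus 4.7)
The plan is to use a single restriction operation together with the convenient characterization of essential hereditary undecidability given in Section~3.1. For any RE theory $V$ in the signature $\mc L(U) \cup \verz{\vv c}$, let $V^-$ denote the theory in $\mc L(U)$ axiomatised by the $\mc L(U)$-consequences of $V$. Three routine observations form the core of the argument: (1) $V^-$ is RE whenever $V$ is, and is decidable whenever $V$ is, since membership of an $\mc L(U)$-sentence $\phi$ in $\downp{V^-}$ is membership of $\phi$ in $\downp V$; (2) if $V$ is consistent, so is $V^-$; (3) the inessential extension is conservative, so $U$ and $U^{(\vv c)}$ have the same $\mc L(U)$-theorems, and therefore $U^{(\vv c)} \sls V$ implies $U \sls V^-$.

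For part (a), the right-to-left direction is immediate: taking $\vv c$ empty turns the c-essential witness into an ordinary essential witness. For the left-to-right direction, assume $U$ is essentially undecidable and suppose, toward a contradiction, that $V$ is a consistent, RE, decidable theory with $U^{(\vv c)} \sls V$. Then $V^-$ is a consistent RE decidable extension of $U$ in the $U$-language, contradicting essential undecidability of $U$.

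For part (b), right-to-left is again immediate. For left-to-right, assume $U$ is essentially hereditarily undecidable, and suppose $V$ is consistent and RE with $U^{(\vv c)} \sls V$. Let $W$ be any consistent RE sub-theory of $V$ (in the language of $V$); we must show $W$ is undecidable. Form $W^-$. By observation~(1), $W^-$ is RE and is decidable whenever $W$ is. Moreover $U + W^-$ is consistent: if not, some finite conjunction $\chi$ of $W^-$-theorems would satisfy $U \vdash \neg\,\chi$; but $\chi$ is an $\mc L(U)$-theorem of $W$ hence of $V$, while by conservativity $U^{(\vv c)} \vdash \neg\,\chi$, so $V \vdash \neg\,\chi$, contradicting consistency of $V$. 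By the characterization theorem of Section~3.1, the consistency of $U + W^-$ together with essential hereditary undecidability of $U$ forces $W^-$ to be undecidable, hence $W$ is undecidable.

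The only point that needs care is observation~(3), i.e., that passage to $V^-$ and the relation $U \sls V^-$ is genuinely achieved once $U^{(\vv c)} \sls V$; this is just the standard conservativity of adding constants without axioms. No significant obstacle remains.
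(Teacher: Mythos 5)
Your proof is correct and follows essentially the same route as the paper's: both arguments hinge on the restriction operation $V \mapsto V^-$ (the $\mc L(U)$-consequences of $V$), the preservation of decidability and consistency under this restriction, and the conservativity of adding fresh constants. The only cosmetic differences are that you argue the consistency of $U + W^-$ directly via a finite conjunction rather than by noting that $W^-$ and $U$ both sit below $V^-$, and you have swapped the labels ``left-to-right'' and ``right-to-left'' relative to the stated biconditional (the direction you call immediate is c-essential $\Rightarrow$ essential, which most readers would call left-to-right); neither affects the substance.
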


\begin{proof}
\emph{We prove \textup(a\textup).}
The left-to-right direction is trivial. We treat right-to-left. Suppose $U$ is essentially undecidable. 
Consider a consistent $U'\supseteq_{\sf c} U$.
Suppose $U'$ is decidable. Let $U^\ast$ be the theory given by the $U'$-theorems in the $U$-language.
Then, $U^\ast \sle U$ and $U^\ast$ is consistent and decidable. \emph{Quod impossibile}. 

\medskip
\emph{We prove \textup(b\textup).} The left-to-right direction is trivial. We treat right-to-left. Suppose $U$ is essentially hereditarily undecidable.
Suppose $U'\supseteq_{\sf c} U$ and  $W \sls U'$. Let ${U'}^\ast$ be the set of consequences of $U'$ in 
the $U$-language and let $W^\ast$ be the set of consequences of $W$ in the $U$-language.
Then $U \sls {U'}^{\ast}$ and $W^\ast \sls {U'}^{\ast}$. If $W$ would be decidable, then
so would be $W^\ast$. \emph{Quod impossibile}.  
\end{proof}

The next theorem combines Theorems~4 and 5 of \cite[Part I]{tars:unde53}. 

\begin{theorem}\label{snaaksesmurf}
Suppose $U$ is decidable and $\phi$ is a sentence in the $U^{(\vv c)}$-language.
Then $U^{(\vv c)}+\phi$ is decidable.
\end{theorem}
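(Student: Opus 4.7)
The plan is to reduce the theorem to Theorem~\ref{finexd} via a preliminary lemma showing that $U^{(\vec c)}$ itself is decidable whenever $U$ is decidable. Once that is in hand, Theorem~\ref{finexd} applied to the decidable theory $U^{(\vec c)}$ and the sentence $\phi$ immediately gives that $U^{(\vec c)} + \phi$ is decidable, completing the proof.

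The preliminary lemma rests on the standard elimination of fresh constants. For any sentence $\psi$ in the $U^{(\vec c)}$-language, write $\psi$ as $\psi_0(\vec c)$ where $\psi_0(\vec x)$ is obtained by replacing each occurrence of $c_i$ with a fresh variable $x_i$. I claim
\[ U^{(\vec c)} \vdash \psi_0(\vec c) \iff U \vdash \forall \vec x\, \psi_0(\vec x). \]
The direction from right to left is immediate by universal instantiation. For the direction from left to right, take a derivation of $\psi_0(\vec c)$ in $U^{(\vec c)}$; since no axiom of $U$ mentions any $c_i$ and the $c_i$ are constants (hence behave like free parameters in predicate logic), one can uniformly substitute fresh variables $\vec x$ for $\vec c$ throughout the derivation to obtain a derivation of $\psi_0(\vec x)$ from $U$ in which $\vec x$ do not appear in any undischarged hypothesis. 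Generalization then yields $U \vdash \forall \vec x\, \psi_0(\vec x)$.

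Given this equivalence, a decision procedure for $U^{(\vec c)}$ is obtained by applying the decision procedure for $U$ to $\forall \vec x\, \psi_0(\vec x)$. Hence $U^{(\vec c)}$ is decidable. Now Theorem~\ref{finexd}, applied with $U^{(\vec c)}$ in place of $U$ and with the sentence $\phi$ (which is indeed a sentence in the $U^{(\vec c)}$-language), gives that $U^{(\vec c)} + \phi$ is decidable, as required.

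I do not expect a genuine obstacle. The only mildly delicate point is verifying that substituting fresh variables for Henkin-style constants inside a formal derivation is legal—this is routine but must be done for whatever proof system is assumed, taking care that the substituted variables are chosen distinct from any bound variables occurring in the derivation (which can be arranged by $\alpha$-renaming).
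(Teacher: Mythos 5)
Your proof is correct and is essentially the route the paper takes: the paper's one-line proof, $U^{(\vv c)}+\phi\vdash \psi$ iff $U \vdash \forall \vv w\, (\phi[\vv c:\vv w] \to \psi[\vv c:\vv w])$, just compresses your two steps (constant elimination plus the deduction theorem, i.e.\ Theorem~\ref{finexd}) into a single equivalence. The paper's own remark that this result ``combines Theorems 4 and 5 of \cite{tars:unde53}'' is precisely the two-lemma factorisation you supply.
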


\begin{proof}
We have $U^{(\vv c)}+\phi\vdash \psi$ iff $U \vdash \forall \vv w\, (\phi[\vv c:\vv w] \to \psi[\vv c:\vv w])$.
\end{proof}

\subsection{c-Lax Interpretability}

We define  \emph{$V$ c-laxly interprets $U$}, or   $V \jump_{\sf c}U$, iff, for all consistent $V'\supseteq_{\sf c} V$, there is a consistent
$V''\supseteq_{\sf c} V'$, such that $V''\rhd U$.   

\begin{theorem}
$V \jump_{\sf c} U$ iff, for all $V'\sle V$, there is a $V'' \supseteq_{\sf c} V$, such that $V''\rhd U$.
\end{theorem}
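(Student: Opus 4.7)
The plan is to establish both directions of the biconditional, interpreting the second formulation as a simplification in which the outer parameter-extension is collapsed to a same-language extension (I read the target $V''$ as satisfying $V'' \supseteq_{\sf c} V'$).

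The forward direction is almost by inspection: if $V \jump_{\sf c} U$ and $V' \sle V$ is a consistent same-language extension, then $V'$ is a trivial instance of $V' \supseteq_{\sf c} V$ (namely with no added constants), so the definition of $\jump_{\sf c}$ immediately produces a consistent $V'' \supseteq_{\sf c} V'$ interpreting $U$.

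For the converse, suppose the simpler condition holds and let a consistent $V' \supseteq_{\sf c} V$ be given, say $V'$ lives in the signature of $V$ augmented by fresh constants $\vec c$. The first step is to pass to $W := V' \cap L_V$, the set of $L_V$-consequences of $V'$. Then $W$ is consistent and $W \sle V$, so the hypothesis delivers a consistent $V''_W \supseteq_{\sf c} W$ with $V''_W \rhd U$; after renaming, its new constants $\vec d$ are disjoint from $\vec c$. I then set $V'' := V' + V''_W$ in the combined signature (the original $V$-signature plus $\vec c$ and $\vec d$). By construction $V'' \supseteq_{\sf c} V'$ and the interpretation of $U$ provided by $V''_W$ still works over the larger $V''$, so $V'' \rhd U$.

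The only substantive step, and the main obstacle, is verifying that $V''$ is consistent. Suppose a finite inconsistency existed, witnessed by $\Gamma_1 \subseteq V'$ and $\Gamma_2 \subseteq V''_W$ with $\vdash \bigwedge \Gamma_1 \to \neg \bigwedge \Gamma_2$. Since none of the constants $\vec d$ occur in $\Gamma_1$, I universally generalize on them to obtain
\[ V' \vdash \forall \vec y\, \neg \bigwedge \Gamma_2[\vec d := \vec y]. \]
The right-hand side is an $L_V$-sentence, so it lies in $W$, hence in $V''_W$; but then instantiating $\vec y$ back to $\vec d$ contradicts $V''_W \vdash \bigwedge \Gamma_2$. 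This proves $V''$ consistent, completing the converse. The disjointness of $\vec c$ and $\vec d$, which powers the universal-generalization step, is the only genuinely delicate point in the proof.
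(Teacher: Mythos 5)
Your proof is correct and follows essentially the same route as the paper's: both directions reduce to passing to the $L_V$-consequences $W$ (the paper's ${V'}^\ast$), renaming the new constants $\vec d$ of the delivered extension disjoint from $\vec c$, taking the union theory, and then proving consistency by universally generalizing over the fresh $\vec d$. You correctly fix the statement's typo (reading $V'' \supseteq_{\sf c} V'$), and your consistency argument is a slight streamlining of the paper's: by taking the interpolant $\neg\bigwedge\Gamma_2$ from the $V''_W$-side, which contains no $\vec c$, you land immediately in an $L_V$-sentence after generalizing over $\vec d$, avoiding the paper's intermediate step of existentially abstracting $\vec c$.
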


\begin{proof}
Left-to-right is trivial. We treat right-to-left. We assume the right-hand-side of our equivalence.
Suppose $V'\supseteq_{\sf c} V$  and $V'$ is consistent. Let $\vv c$ be the relevant new constants. 
We define ${V'}^\ast$ as the theory given by the consequences of $V'$ in the $V$-language. 
Let $Z\supseteq_{\sf c} {V'}^\ast$ be a consistent theory, such that $Z \rhd U$.
Clearly, we may choose the new constants of $Z$, say $\vv d,$ distinct from $\vv c$.
We take $V'' := {V'}^{(\vv d)}\cup Z^{(\vv c)}$.
Clearly, $V'' \supseteq_{\sf c} V'$ and $V'' \rhd U$.

We claim that $V''$ is consistent. If not, for some $\psi(\vv c,\vv d)$, we would have
${V'}^{(\vv d)} \vdash \psi(\vv c,\vv d)$ and $Z^{(\vv c)} \vdash \neg\, \psi(\vv c,\vv d)$.
It follows that ${V'}^\ast \vdash \exists \vv v\, \forall \vv w\,\psi(\vv v,\vv w)$, So,
$Z \vdash  \exists \vv v\, \forall \vv w\,\psi(\vv v,\vv w)$. On the other hand, it follows
that $Z \vdash \forall \vv v\,   \neg\, \psi(\vv v,\vv d)$. This contradicts the fact that
$Z$ is consistent.
\end{proof}

We prove that essential hereditary undecidability is preserved over c-lax interpretability.

\begin{theorem}
Suppose $V \jump_{\sf c} U$ and $U$ is essentially hereditarily undecidable. Then $V$ is essentially hereditarily undecidable. 
\end{theorem}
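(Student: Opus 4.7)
The plan is to follow the template of Theorem~\ref{reversosmurf}(ii), with the crucial adjustment that the witnessing interpretation of $U$ now lives in a language of $V$ enlarged by finitely many constants. I would first invoke the characterisation of essential hereditary undecidability given earlier: it suffices to prove that for every RE theory $W$ in the $V$-language with $V+W$ consistent, $W$ is undecidable.

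So let such a $W$ be given, and set $V' := V \cup W$, which is a consistent extension of $V$ in the $V$-language (hence trivially $V' \supseteq_{\sf c} V$). Applying $V \jump_{\sf c} U$, I extract a consistent $V'' \supseteq_{\sf c} V'$ together with an interpretation $K : V'' \rhd U$; let $\vv c$ collect the constants new to the language of $V''$, so that $W^{(\vv c)} \sls V''$. I then define the auxiliary $U$-language theory
\[ Z := \{\phi : W^{(\vv c)} + \idtb{U}{K} \vdash \phi^K\}. \]

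The two facts to establish are: (a) $U + Z$ is consistent, since any refutation $U \vdash \neg(\phi_1 \wedge \cdots \wedge \phi_n)$ with all $\phi_i \in Z$ would give, via $K$, that $V''$ both proves each $\phi_i^K$ (using $W^{(\vv c)} \sls V''$ and $V'' \vdash \idtb{U}{K}$) and proves their joint negation, contradicting consistency of $V''$; and (b) if $W$ were decidable then so would be $Z$, because $W^{(\vv c)}$ is decidable by Theorem~\ref{snaaksesmurf} applied with $\phi = \top$, $W^{(\vv c)} + \idtb{U}{K}$ is then decidable by Theorem~\ref{finexd}, and the defining equivalence $Z \vdash \phi \Iff W^{(\vv c)} + \idtb{U}{K} \vdash \phi^K$ recursively transfers this decidability to $Z$. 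Combining (a) with the essential hereditary undecidability of $U$ forces $Z$ to be undecidable, which together with (b) shows that $W$ cannot be decidable.

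The main obstacle, and the reason c-lax interpretability is the appropriate refinement of lax interpretability for this result, is precisely step (b): one needs Theorem~\ref{snaaksesmurf} to absorb the new constants $\vv c$ without destroying decidability, and without this fact the whole argument would break as soon as $V''$ introduced any fresh constants. Parametric interpretations (a parameter-domain in $K$) require only the cosmetic adaptation explained in the footnote to Theorem~\ref{reversosmurf}, so I would handle that case in a footnote rather than in the main argument.
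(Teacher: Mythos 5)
Your proof is correct and follows essentially the same route as the paper's: pass to $V' := V \cup W$, pull out $V'' \supseteq_{\sf c} V'$ with an interpretation $K$ of $U$, push $W$ plus identity axioms along $K$ to a $U$-language theory, and use Theorem~\ref{snaaksesmurf} to control decidability under adjunction of the new constants. The only cosmetic difference is that you obtain decidability of $W^{(\vv c)} + \idtb{U}{K}$ in two steps (Theorem~\ref{snaaksesmurf} with $\phi = \top$, then Theorem~\ref{finexd}) where the paper applies Theorem~\ref{snaaksesmurf} once with $\phi = \idtb{U}{K}$.
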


\begin{proof}
Suppose $V \jump_{\sf c} U$ and $U$ is essentially hereditarily undecidable. 
Suppose $W$ is a decidable theory in the same language as $V$ that is  consistent with $V$. 
We have $ V\sls V\cup W =: V'$.
So, there is a consistent $V'' \supseteq_{\sf c} V'$ such that $V'' \rhd U$.   Say the witnessing constants are $\vec c$ and the witnessing
 translation is $\tau$. Let $Z := W^{(\vv c)} + \mf{id}^\tau$. Then, by Theorem~\ref{snaaksesmurf},  $Z$ decidable. 
Since $Z\sls V''$, 
  so $T:= \verz{\phi \mid Z \vdash \phi^\tau}$ is consistent with $U$.  Since $Z$ is decidable $T$ is decidable. \emph{Quod non}.
\end{proof}

\begin{theorem}
Suppose $V \rhd_{\sf pl} U$.
Then, $V \jump_{\sf c} U$.
\end{theorem}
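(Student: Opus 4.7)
The plan is to realise the parameters $\vv p$ from the parametric local interpretation as fresh constants inside any given c-extension. Fix a parametric translation $\tau$ with parameter tuple $\vv p$ witnessing $V \rhd_{\sf pl} U$, so that $V \vdash \exists \vv p\, \phi^{\tau,\vv p}$ whenever $U \vdash \phi$. Given a consistent $V' \supseteq_{\sf c} V$, I will choose a tuple of constants $\vv c$ fresh with respect to the signature of $V'$ and define
\[
V'' := V'^{(\vv c)} + \{\,\phi^{\tau, \vv c} : U \vdash \phi\,\},
\]
where $\phi^{\tau, \vv c}$ denotes the result of substituting $\vv c$ for the parameters $\vv p$ in $\phi^{\tau, \vv p}$. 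Clearly $V'' \supseteq_{\sf c} V'$, and the parameter-free translation obtained from $\tau$ by replacing $\vv p$ throughout by $\vv c$ witnesses $V'' \rhd U$: every $U$-theorem $\phi$ is translated to an axiom of $V''$.

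The substantive step is the consistency of $V''$. Suppose otherwise. By compactness, some finite list $\phi_1, \ldots, \phi_n$ of $U$-theorems already makes $V'^{(\vv c)} + \{\phi_i^{\tau, \vv c} : i \leq n\}$ inconsistent, so
\[
V'^{(\vv c)} \vdash \neg\, \bigwedge_{i \leq n} \phi_i^{\tau, \vv c}.
\]
Since $\vv c$ is fresh in $V'$, universal generalisation yields $V' \vdash \neg\, \exists \vv p\, \bigwedge_{i \leq n} \phi_i^{\tau, \vv p}$. On the other hand, $U \vdash \phi := \bigwedge_{i \leq n} \phi_i$ and translations commute with conjunction, so from $V \rhd_{\sf pl} U$ we obtain $V \vdash \exists \vv p\, \bigwedge_{i \leq n} \phi_i^{\tau, \vv p}$. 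Because $V \sls V'$, $V'$ proves this same existential, contradicting the displayed line and the consistency of $V'$. Hence $V''$ is consistent, completing the construction.

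The only point that I expect to require real care is the bookkeeping around the parameter domain $\pi(\vv p)$ of $\tau$ and the identity-theory axioms $\idtb{U}{\tau,\vv p}$ implicit in a parametric translation. As in the subsection on parameters and finite axiomatisability, these can be absorbed by including the $U$-axioms of identity among the translated theorems and by reading $\phi^{\tau,\vv p}$ as implicitly coupled with $\pi(\vv p)$; the compactness argument above is then unchanged, since $\pi(\vv c)$ simply becomes one more conjunct in the finite list. No further ideas are needed.
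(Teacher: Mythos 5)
Your proof is correct and follows essentially the same approach as the paper: pick the witnessing parametric translation, instantiate the parameters by fresh constants, and establish consistency of the resulting c-extension by compactness from the fact that the quantified translations $\exists\vv p\,\phi^{\tau,\vv p}$ are all provable. The only cosmetic difference is that you argue directly from the definition of $\jump_{\sf c}$, whereas the paper first invokes its characterisation of $\jump_{\sf c}$ in terms of same-language extensions $V'\sle V$ and so never needs to carry along the extra constants already present in a c-extension $V'\supseteq_{\sf c}V$; both routes are immediate once the fresh-constant instantiation is in place.
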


\begin{proof}
Suppose $V \rhd_{\sf pl} U$.  Let $\tau$ be the witnessing translation.
Consider any consistent $V' \sle V$. 
We find that $V'$ proves $\exists \vv p\; \phi^{\tau,\vv p}$, for all
$\phi$ such that $U \vdash \phi$. We take a sequence of fresh constants 
$\vv c$ of the length of $\vv p$.  Then, by compactness,
$V'' :=V' + \verz{\phi^{\tau, \vv c} \mid U \vdash \phi}$ is consistent and, clearly,
$V'' \supseteq_{\sf c} V'$ and $V'' \rhd U$.
\end{proof}

\section{An Alternative Proof of Theorem~\ref{reptoei}}\label{rosser}
We show that if $U$ is $\Sigma^0_1$-representative, then it is effectively inseparable.
Let $\Phi$ witness that $U$ is $\Sigma^0_1$-representative. 
 Suppose $\mathcal X_0$ and $\mathcal X_1$ are disjoint  RE sets.
 Via a bijective G\"odel numbering, we can view these as
 sets of $U$-sentences. Suppose that
   $\downp{U} \subseteq \mc X_0$ and $\downr{U} \subseteq \mathcal X_1$.

Suppose $\exists u \, \delta_i(u,x)$ represents $\Phi(x)\in \mathcal X_i$ where $\delta_i$ is a pure $\Delta_0$-formula.

We need a special form of the G\"odel fixed point construction. We substitute a formula $\rho$ in  $\sigma_0(v) < \sigma_1(v)$ in the
following way. First we  substitute the numeral of the G\"odel number of $\rho$ in each of the $\sigma_i$.
Then, we rewrite each of the resulting formulas to pure 1-$\Sigma^0_1$-form and, subsequently, we apply witness comparison.
Suppose {\sf Sub} is this function.
Let ${\sf sub}(x,y,z)$ be a pure 1-$\Sigma^0_1$-representation of
the graph of {\sf Sub}.
Say ${\sf sub}(x,y,z)$ is $\exists w\, \nu(w,x,y,z)$, where $\nu$ is pure $\Delta_0$ and 
the witness $w$ will always majorise $x$, $y$ and $z$. 
We consider the formula $\phi(v)$:
\begin{multline*}
 (\exists a \, \exists w\bles a \, \exists z\bles a\, (\nu(w,v,v,z) \wedge \exists u\bles a\, \delta_1(u,z))) \; < \\
 (\exists b \, \exists w\bles a \, \exists z\bles b\, (\nu(w,v,v,z) \wedge \exists u\bles b\, \delta_0(u,z))).
\end{multline*}

\noindent
Let $\rho := {\sf Sub}(\phi,\phi)$. We note that $\rho$ is of the literal form $\rho_1 < \rho_0$, where the $\rho_i$ are
pure 1-$\Sigma^0_1$-sentences.

Suppose $\Phi(\rho) \in \mathcal X_i$. Then $\rho_i$. So either $\rho$ or $\rho^\bot$. 
If we have $\rho$, then $\Phi(\rho) \in \mathcal X_1$. Moreover, by $\Sigma 1$, we find $U\vdash \Phi(\rho)$, and, hence,
$\Phi(\rho)\in \mathcal X_0$. \emph{Quod impossibile}. If we have $\rho^\bot$, then
$\Phi(\rho) \in \mathcal X_0$. Moreover,
 by $\Sigma 3$, we have $U \vdash \neg\, \Phi(\rho)$ and, hence, $\Phi(\rho) \in \mathcal X_1$. 
 \emph{Quod impossibile iterum}.
 \emph{Ergo}, the formula $\Phi(\rho)$ is in none of the $\mc X_i$.
 
 \begin{remark}
 Inspection of the proof shows that in order to derive Rosser's theorem we do not need the formalisation of the Fixed Point Lemma  inside the given theory.
 \end{remark}
 
 \begin{remark}
 The above proof can be somewhat simplified by employing a self-referential G\"odel numbering. See, e.g., \cite{krip:gode23} or \cite{grab:self23}.
 However, even with this strategy, there will be some detail on how to handle substitution of numerals.
  \end{remark}
 \end{document}